\pgfplotsset{compat=1.16}
\definecolor{brickred}{rgb}{0.8, 0.25, 0.33}
\definecolor{darkpowderblue}{rgb}{0.0, 0.2, 0.6}
\definecolor{intermedio3}{rgb}{0.25, 0.47, 0.44}
\definecolor{intermedio2}{rgb}{0.21, 0.49, 0.34}
\definecolor{intermedio1}{rgb}{0.17, 0.52, 0.24}
\definecolor{forestgreen(web)}{rgb}{0.13, 0.55, 0.13}
\definecolor{lightred}{rgb}{0.85,0,0}
\newtheorem{theorem}{\bf Theorem}[section]
\newtheorem{lemma}[theorem]{\bf Lemma}
\newtheorem{definition}[theorem]{\bf Definition}
\newtheorem{remark}[theorem]{\bf Remark}
\newtheorem{proposition}[theorem]{\bf Proposition}
\numberwithin{equation}{section}
\newenvironment{giurev}{\color{blue}}{\color{black}}
\newcommand{\bgr}{\begin{giurev}}
\newcommand{\egr}{\end{giurev}}
\newcommand{\denoterow}[1]{\rlap{\hspace{1em}$\leftarrow{}$ $j^{th}$}}
\newcommand{\R}{\mathbb{R}}
\newcommand{\N}{\mathbb{N}}
\def \rn {{\mathbb {R}}^{N}}   
\def \rnm {{\mathbb {R}}^{\tilde N}} 
\def \rnn {{\mathbb {R}}^{N+1}}  
\def \r {\varrho}
\def \tt {\tilde}
\def \s {\sigma}
\def \A {\mathscr{A}}
\def \GG {{\mathbb{G}}}
\def \div {{\text{\rm div}}}
\def \loc {{\text{\rm loc}}}
\def \p {\partial}
\def \ti {\times}
\def \rank {{\text{\rm rank }}}
\def \span {{\text{\rm span}}}
\def \h {\mathscr{H}}
\def \H {\mathscr{H}_\GG}
\def \HG {\mathscr{H}_{\tilde\GG}}
\def \HH{\mathscr{H}_\GG^*}
\def \a {{\alpha}}
\def \G {{\Gamma}}
\def \d {{\delta}}
\def \e {{\varepsilon}}
\def \l {{\lambda}}
\def \t {{\tau}}
\def \m {{\mu}}
\def \n {{\nu}}
\def \x {{\xi}}
\def \y {{\eta}}
\def \z {{\zeta}}
\def \phi {{\varphi}}
\def \O {{\Omega}}
\def \AA {{ \mathscr{A}_{\z} }}
\title{Fundamental solution and Harnack inequality for subelliptic evolution operators on Carnot groups}
\author{{\sc{G. Pecorella}
		\thanks{Dipartimento di Scienze Fisiche, Informatiche e Matematiche, Universit\`{a} di Modena e Reggio Emilia, Via
			Campi 213/b, 41125 Modena (Italy). E-mail: giulio.pecorella@unimore.it} \qquad 
		\sc{A. Rebucci}
		\thanks{Max-Planck Institute for Mathematics in the Sciences. E-mail: annalaura.rebucci@mis.mpg.de} 
}}
\begin{document}

\maketitle

\begin{abstract}
In this paper we study regularity properties of a class of subelliptic evolution operators. We first prove the existence of the fundamental solution by means of Levi’s parametrix method, establishing also several key properties. We then employ these results, together with some mean value formulas, to prove a maximum principle and an invariant Harnack inequality for the classical solutions to the equations under study. Our analysis critically relies on the Carnot–Carathéodory geometry naturally induced by the vector fields defining these operators.
\end{abstract}

\bigskip
\normalsize
\tableofcontents

\section{Introduction}
In this paper, by constructing the fundamental solution, we prove a strong maximum principle and an invariant Harnack inequality for classical solutions to the second order subelliptic equation
\begin{equation}\label{HG1}
	\H u(z) =\sum_{i,j=1}^{m_1} X_i\big(a_{ij}(z)X_j\big)u(z) + \sum_{i=1}^{m_1} b_i(z)X_iu(z)+ c(z)u(z)-\p_t u(z)=f(z),\qquad z\in\O,
\end{equation}
in some open set $\O\subset \rnn$. Throughout the rest of the paper, $z=(x,t)$ and $\z=(\x,\t)$ denote points in $\rn \times \R$, and $X_i$'s are smooth vector fields on $\rn$, namely
\begin{equation}\label{smooth-vector-field}
	X_i(x)=\sum_{j=1}^N \phi_j^i(x)\p_{x_j},\qquad i=1,\ldots,m_1,
\end{equation}
with $\phi_j^i\in C^\infty\left(\rn\right)$. Moreover, $A(z)=\big(a_{i,j}(z)\big)^{m_1}_{i,j=1}$ is a $m_1\times m_1$ symmetric matrix with continuous and bounded real entries that satisfies the usual \emph{uniform ellipticity condition}. We also assume that the low order coefficients $b$ and $c$ are continuous and bounded. Moreover, we suppose that $m_1<N$, and therefore $\H$ is strongly degenerate. However, we recover some regularity properties for $\H$ by assuming that the vector fields $\{X_i\}^{m_1}_{i=1}$  satisfy H\"ormander's rank condition.

In the following, we employ the notation introduced in the monograph \cite{biagi2019introduction} concerning differential operators on stratified Lie groups. Let $\mathfrak{g}$ be the Lie algebra generated by $\!\{X_i\}^{m_1}_{i=1}$ (the smallest Lie subalgebra of the Lie algebra
of smooth vector fields on $\rn\!$ containing $\!\{X_i\}^{m_1}_{i=1}$). We identify a smooth vector field $X(x)\!=\!\sum_{j=1}^N \phi_j(x)\p_{x_j}$  with the vector valued function $\big(\phi_1(x),\ldots,\phi_N(x)\big)\in \rn$, and for every $x\in\R^N$ we set $\mathfrak{g} (x):=\big\{X(x), X\in  \mathfrak{g} \big\}$. We assume that
\\\\\medskip
\noindent
{{\bf(H1)}}  \ {
\it $X_1,\ldots,X_{m_1}$ are linearly independent smooth vector fields, and 
	\vspace{-0.3cm}
\begin{equation}
	\rank \mathfrak{g}=N.
		\end{equation}}{{{\bf(H2)}} \ {\it The  Hörmander rank condition (see \cite{hormander1967hypoelliptic}) holds
\begin{equation}
	\rank \mathfrak{g}(x)=N,\qquad \forall x \in \rn,
\end{equation}
and for every $i=1,\ldots ,m_1$ we have $X_i(0)=\p_{x_i}$.}
\\\\{{\bf(H3)}}\ {
\it There exists a family of (non-isotropic) dilations $\{\d_r\}_{r>0}$ on $\rn$ of the kind
\begin{equation}\label{eq:dilH3}
	\d_r(x)=\Big(r^{\sigma_1}x_1, r^{\sigma_2}x_2,\ldots,r^{\sigma_N} x_N\Big),
\end{equation}
where $1 = \sigma_1 \leq \sigma_2 \leq \ldots \leq \sigma_N$ are integers, such that the vector fields $X_1,\ldots,X_{m_1}$ are $\d_r$-homogeneous of degree $1$, i.e.
\begin{equation*}
	X_i \left( f \big(\delta_r(x)\big) \right)=r\big(X_i f \big)  \big(\delta_r(x)\big), \quad \forall r>0,\quad x \in\rn, \quad f \in C^\infty\big(\R^N\big),\quad i=1, \ldots,m_1.
\end{equation*}}

\smallskip 

We remark that the assumptions \textbf{(H1)}, \textbf{(H2)} are independent. Indeed, the condition that $\rank \mathfrak{g}(x) = \rank \mathfrak{g}$ does not necessarily hold for every $x \in \rn$: consider for instance the vector fields $X_1=\p_{x_1}$ and $X_2=x_1\p_{x_2}+x_2\p_{x_3}$ on $\R^3$; one then has $\rank \mathfrak{g}(0)=3$ and $\rank \mathfrak{g}=4$. 

\medskip

We recall, by Theorem 1.7 in \cite{bonfiglioli2020hormander}, that the assumptions \textbf{(H1)}, \textbf{(H2)} and \textbf{(H3)} imply the existence of a Carnot group $\GG=\left(\rn,\circ,\d_{r}\right)$ (where the dilations $\{\d_r\}_{r>0}$ are the ones given by \textbf{(H3)}) such that $\{X_i\}^{m_1}_{i=1}$ are left invariant w.r.t. $\circ$, and the operator
\begin{equation}\label{sublaplacian}
	\Delta_{\GG}=\sum_{i=1}^{m_1}X_i^2,
\end{equation}
is the \emph{canonical sublaplacian} on $\GG$. Despite being degenerate, $\Delta_{\GG}$ is a \emph{hypoelliptic} operator, namely every distributional solution $u$ to $\Delta_\GG u=f$ is smooth whenever $f$ is smooth (see \cite{hormander1967hypoelliptic} for more detail).

\medskip

We associate to the smooth vector fields some function spaces, that are standard to study classical solutions to equation $\H u= f$.
\begin{definition}[Lie derivative]\label{Lie-derivative}
	Let $X=\sum_{j=1}^N \phi_j\p_{x_j}$ be a smooth vector field on $\rn$. For every $x\in \rn$ the \emph{integral path of} $X$ \emph{starting at} $x$ is the solution $\exp(sX)(x) $ to the following Cauchy problem
	\begin{equation}
		\begin{cases}
			\frac{d}{ds}\exp(sX)(x) = X \left( \exp(sX)(x)\right),\\
			\exp(sX)(x)|_{s=0}=x.
		\end{cases}
	\end{equation}
	We say that a function $u$ is Lie differentiable at $(x,t)\in\rnn$ w.r.t. $X$ if the following limit
	\begin{equation}
		Xu(x,t)=\lim_{s\rightarrow 0} \frac{u\big(\exp(sX)(x) ,t\big)-u(x,t)}{s}
	\end{equation}
	 exists and is finite.
\end{definition} 
We observe that if $\Omega$ is an open subset of $\R^{N+1}$ and $\p_{x_j}u \in C\left(\O\right)$ for every $j=1,\ldots,N$, then $u$ is Lie differentiable w.r.t.$\,X$ and its Lie derivative is given by $Xu=\sum_{j=1}^N \phi_j\p_{x_j}u$. On the other hand, the converse is not necessarily true.

We introduce the function space
\begin{equation}\label{C2}
	C^2_\GG\left(\O\right):= \Big\{ u\in C(\O):X_i u, X_iX_j u,\p_tu \in C(\O),\quad  \text{for every}\;\; i,j=1,\ldots,m_1\Big\},
\end{equation}
where the derivatives $X_iu$ and $X_iX_ju$ are meant as first and second order Lie derivatives.

We are now ready to give the definition of classical solutions to \eqref{HG1}. 
\begin{definition}[Classical solutions for $\H$]\label{Def-classical}
	Let $\Omega$ be an open subset of $\R^{N+1}$ and let $f\in C(\O)$. We say that a function $u$ is a classical solution to
	\begin{equation}
		\H u(z) =\sum_{i,j=1}^{m_1} X_i\big(a_{ij}(z)X_j\big)u(z) + \sum_{i=1}^{m_1} b_i(z)X_iu(z)+ c(z)u(z)-\p_t u(z)=f(z),\qquad z \in \O,
	\end{equation}
	if $u \in C^2_{\GG}(\O)$ and the above equation is satisfied at every point $z \in \O$.
\end{definition}
Our goal is to construct the fundamental solution to $\H$, and to prove a strong maximum principle and the invariant Harnack inequality. To this end, we improve upon a mean value formula recently derived by Pallara and Polidoro in \cite{pallara2023meanvalueformulasclassical}, which relies on the \emph{fundamental solution} $\G^*$ of the adjoint operator $\HH$.  Therefore, our first step is to prove the existence of $\G^*$ (together with regularity estimates) by adapting the classical \emph{Levi's parametrix method} (see Section \ref{Fundamental} below). To apply this method, we need to assume that the coefficients belong to a H\"older space suited to $\H$, that is defined w.r.t.$\,$a parabolic distance $d$  related to the Carnot group distance $d_X$ (see \eqref{GG-distance} and Definition \ref{H\"older continuous function} in Section \ref{Carnot} below). We therefore rely on the following assumptions on the coefficients. 
\\\\
\noindent
{{\bf(H4)}}\ {
\it For every $i,j=1,\ldots,m_1$ we have $a_{ij},b_i,c,X_ja_{ij},X_ib_i \in C\big(\rnn\big)$, and there exist $M_1,M_2>0$ and $\a\in(0,1]$ such that
\begin{equation}\label{non-divergence-form-condition}
	\begin{aligned}
		&|a_{ij}(z)|,|b_{i}(z)|,|c(z)|\leq M_1,\qquad &\forall z \in \rnn,\\
		&|a_{ij}(z)-a_{ij}(\z)|,|b_{i}(z)-b_i(\z)|,|c(z)-c(\z)|\leq M_2\,d(z,\z)^\a, \qquad &\forall z,\z\in \rnn,
	\end{aligned}
\end{equation}
\begin{equation}\label{divergence-form-condition}
	\begin{aligned}
		&\!\!\!\!\!\!\!\!\!\!\!\!|X_ja_{ij}(z)|,|X_ib_{i}(z)|\leq M_1,\qquad &\qquad\forall z \in \rnn,\\
		&\!\!\!\!\!\!\!\!\!\!\!\!|X_ja_{ij}(z)-X_ja_{ij}(\z)|,|X_ib_{i}(z)-X_ib_i(\z)|\leq M_2\,d(z,\z)^\a, \qquad &\forall z,\z\in \rnn,
	\end{aligned}
\end{equation}
where $d(z,\zeta)$ denotes the $\GG$-parabolic distance between $z$ and $\zeta$ as defined in \eqref{GG-distance}. Moreover, there exists $\l\geq 1$ such that $A$ belongs to the uniform ellipticity class 
\begin{equation}\label{def-ellipticity-class}
	\textbf{M}_\l:=\left\{M\in \text{Sym}_{m_1}(\R): \;\;\frac{1}{\l}|\xi|^2\leq \langle M(z)\x,\x\rangle \leq \l |\x|^2,\quad \forall z \in \rnn,\, \forall \x \in \R^{m_1}\right\}.
\end{equation}}
Throughout the paper, we assume \textbf{(H1)}, \textbf{(H2)}, \textbf{(H3)} and \textbf{(H4)}.

\subsection{Main results}
One of the main contributions of this paper is the existence of the global fundamental solution to \eqref{HG1}, the construction of which proceeds by adapting the classical Levi parametrix method. Along with the existence, we show Gaussian upper and lower bounds on the fundamental solution and its Lie derivatives, as well as many interesting properties (see Theorem \ref{Th-Fundamental}). Moreover, we establish the corresponding dual properties for the fundamental solution to the adjoint operator. These results on the fundamental solution are then employed to derive new regularity results for these operators.

\medskip
	
The first regularity result is a strong maximum principle for \eqref{HG1}. In order to state this result, we need to introduce the definition of \emph{attainable set}.

We say that a curve $\gamma(\t)=\big(\gamma_1(\t),\ldots,\gamma_{N+1}(\t)\big):[0,1]\to \rnn$ is \textit{$\H$-admissible} if it is absolutely continuous and solves the following differential equation
\begin{equation}
	\dot{\gamma}(\t)=\sum_{i=1}^{m_1}\omega_i(\t)X_i\big(\gamma(s)\big)+ \big(0,\ldots,0,-1\big),
\end{equation}
for a.e. $\t\in [0,1]$, where $\omega_1,\ldots,\omega_{m_1}\in L^\infty\big([0,1]\big)$.
\begin{definition}[$\H$-attainable set]
	Let $\O$ be an open subset of $\rnn$, and let $\z\in\O$. The $\H$-attainable set of $\z$ in $\O$ is
	\begin{equation}
		\AA(\O):=\Big\{z\in\O :\; \emph{there exists an $\H$-admissible curve $\gamma$ s.t. $\gamma(0)=\z$ and $\gamma(1)=z$}\Big\}.
	\end{equation}
\end{definition}
\begin{theorem}\label{Th-maximum}
	Assume that \textbf{(H1)},\textbf{(H2)},\textbf{(H3)} and \textbf{(H4)} hold, and let  $u$ be a classical solution to $\H u=f$ in some open set $\O\subset \rnn$. Suppose that $c\leq 0$, $\div_\GG b-c\geq 0$   and $f\geq 0$. If there exists $\z\in\O$ such that $u(\z)=\max_\O u\geq 0$, then
	\begin{equation}
		u(z)=u(\z)\quad \text{and}\quad f(z)=u(\z)c(z),\qquad \text{for every } z\in \overline{\AA(\O)}. 
	\end{equation}
	An analogous result holds if $u(\z)=\min_\O u\leq 0$ and $f\leq0$.  Finally, the assumption on the sign of $u(\z)$ can be dropped if $c=0$. 
\end{theorem}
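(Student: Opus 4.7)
I would first reduce the assertion, by linearity, to a simpler sub-statement about non-positive functions; then prove a local rigidity result via the mean value formula based on the adjoint fundamental solution $\G^*$; and finally propagate this rigidity along $\H$-admissible curves by a standard open-closed argument. The main obstacle is the local rigidity step, where the structural hypothesis $\div_\GG b - c \geq 0$ plays a decisive role.

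\textbf{Reduction.} Set $v := u - u(\z)$. Since all Lie derivatives of the constant $u(\z)$ vanish, $\H v = \H u - c\, u(\z) = f - c\, u(\z)$; in the maximum case the hypotheses $f \geq 0$, $c \leq 0$, $u(\z) \geq 0$ give $\H v \geq 0$, together with $v \leq 0$ in $\O$ and $v(\z) = 0$. The minimum case follows by applying the maximum result to $-u$, while for $c \equiv 0$ the term $c\,u(\z)$ drops out and no sign hypothesis on $u(\z)$ is needed. The theorem thus reduces to the statement: \emph{if $v \in C^2_\GG(\O)$ satisfies $\H v \geq 0$, $v \leq 0$ in $\O$ and $v(\z) = 0$, then $v \equiv 0$ on $\overline{\AA(\O)}$.}

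\textbf{Local rigidity via the MVF.} For any $z_0 \in \O$ with $v(z_0) = 0$, I would apply the mean value formula at $z_0$ on a superlevel set $\O_r(z_0) := \{\G^*(\cdot, z_0) > 1/r\}$, with $r > 0$ small enough that $\O_r(z_0) \Subset \O$. The improved MVF, adapted from Pallara--Polidoro \cite{pallara2023meanvalueformulasclassical} to the framework of this paper, has the schematic form
\begin{equation*}
v(z_0) \;=\; \int_{\O_r(z_0)} K(z,z_0)\, v(z)\, dz \;-\; \int_{\O_r(z_0)} \G^*(z,z_0)\, \H v(z)\, dz,
\end{equation*}
with a non-negative kernel $K$. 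The hypothesis $\div_\GG b - c \geq 0$ is exactly what is needed when integrating by parts against the first-order drift term $b \cdot \nabla_\GG v$ to keep $K \geq 0$. The sign conditions $v \leq 0$ and $\H v \geq 0$ together with $v(z_0) = 0$ force both integrals to vanish; the strict positivity of $K$ on the interior of $\O_r(z_0)$, which follows from the Gaussian lower bounds on $\G^*$ of Theorem~\ref{Th-Fundamental}, then gives $v \equiv 0$ on $\O_r(z_0)$.

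\textbf{Propagation along admissible curves.} Given $z \in \AA(\O)$ and an $\H$-admissible curve $\gamma:[0,1]\to\O$ with $\gamma(0) = \z$, $\gamma(1) = z$, set
\begin{equation*}
S := \{s \in [0,1] \,:\, v(\gamma(s')) = 0 \text{ for every } s' \in [0,s]\}.
\end{equation*}
$S$ is non-empty ($0 \in S$) and closed by continuity of $v \circ \gamma$. For right-openness, let $s_0 \in S$: the local rigidity step at $\gamma(s_0)$ gives $v \equiv 0$ on $\O_r(\gamma(s_0))$. Since the last component of $\gamma$ satisfies $\dot\gamma_{N+1} \equiv -1$ and $\gamma$ is absolutely continuous, for $h > 0$ small enough the point $\gamma(s_0 + h)$ lies strictly in the past of $\gamma(s_0)$ and inside $\O_r(\gamma(s_0))$; hence $s_0 + h \in S$. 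Thus $S = [0,1]$, so $v(z) = 0$, and by continuity $v \equiv 0$ on $\overline{\AA(\O)}$. Finally, on $\overline{\AA(\O)} \cap \O$ all derivatives $X_i u$, $X_i X_j u$, $\p_t u$ vanish, so $\H u = f$ reduces to $c(z)\, u(\z) = f(z)$; continuity of $c, f$ extends this identity to the full closure.

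\textbf{Main obstacle.} The crux is the local rigidity step. It requires a precise form of the mean value formula (an improvement of the one in \cite{pallara2023meanvalueformulasclassical}) in which the drift contribution has been integrated by parts so as to produce the combination $\div_\GG b - c$, precisely the quantity whose sign is postulated. One must verify that the boundary contributions on $\partial \O_r(z_0)$ either vanish or carry the correct sign, and that the kernel $K$ is non-negative and strictly positive in the interior of $\O_r(z_0)$ using the Gaussian bounds of Theorem~\ref{Th-Fundamental}. Once this improved MVF is in place, the rigidity and propagation arguments follow the classical parabolic pattern adapted to the sub-Riemannian geometry induced by the horizontal vector fields.
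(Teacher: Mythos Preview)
Your overall strategy is the same as the paper's---apply the mean value formula, use the sign hypotheses to force local constancy on a superlevel set of $\Gamma^*$, then propagate along admissible curves by an open-closed argument---but two points in your local rigidity step are inaccurate enough to count as real gaps.

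First, the mean value formula has \emph{three} terms, not two: besides the kernel integral $\int M_\GG\,u$ and the source term in $f=\H u$, there is a separate integral $\int N\,(\div_\GG b - c)\,u$ with a positive weight $N$. The hypothesis $\div_\GG b - c \geq 0$ is \emph{not} what makes the kernel non-negative (that comes from ellipticity alone, since $M_\GG = \langle A\nabla_\GG\Gamma,\nabla_\GG\Gamma\rangle/\Gamma^2 \geq 0$); rather, after applying the formula to both $u$ and the constant $u(\z)$ (using $\H 1 = c$) and subtracting, this third term becomes $\int N\,(\div_\GG b - c)(u - u(\z))$, and the hypothesis gives it the same sign as the other two so that all three are forced to vanish. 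Second, and more seriously, strict positivity of the kernel does not follow from Gaussian lower bounds on $\Gamma^*$: the kernel $M_\GG$ vanishes wherever $\nabla_\GG\Gamma$ does, and there is no a priori control on that zero set. The paper resolves this via Kuptsov's Hadamard descent: tensor $\H$ with a heat operator $\Delta_y^{(m)}$ in $m>2$ auxiliary variables, apply the mean value formula to the lifted operator, and integrate out $y$. The resulting kernel $M_r^{(m)}$ acquires a strictly positive Pini--Watson contribution $\omega_m N_r^m\cdot\tfrac{m}{m+2}\tfrac{N_r^2}{4(\tau-t)^2}$ on all of $\Omega_r^{(m)}(\z)$, and it is this strict positivity that forces $u\equiv u(\z)$ there. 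Your proposal omits this mechanism entirely.

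A smaller point: in the propagation step, ``$\gamma(s_0+h)$ lies strictly in the past'' is not by itself enough to place it inside the superlevel set; you also need the spatial displacement to satisfy $d_X\big(\gamma_*(s_0+h),\gamma_*(s_0)\big)=O(\sqrt{h})$, which the paper obtains by reparametrizing and using $d_X^{(2)}=d_X$, after which the Gaussian lower bound on $\Gamma$ gives the inclusion.
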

\begin{remark}
	We also point out that, due to the generality of \eqref{HG1}, it is not straightforward to get rid of  assumption $\div_\GG b-c\geq 0$. However, this additional hypothesis seems to be natural in this setting, see \cite{pallara2023meanvalueformulasclassical}. In particular, Theorem \ref{Th-maximum} improves upon Theorem 2.5 in \cite{pallara2023meanvalueformulasclassical} since it includes the endpoint case $\div_\GG b-c=0$.  
\end{remark}

\medskip

The second regularity result is an invariant Harnack inequality for non-negative solutions to $\H u =0$, where $\H$ is the operator in \eqref{HG1}. In order to state the invariant Harnack inequality, we first need to introduce the future and past cylinder where we carry out our analysis. Here and in the following, we denote by $B_r(x_0)$ the \textit{$\GG$-ball of radius $r$ and center $x_0$} (defined w.r.t. the CC-distance $d_X$ introduced in \eqref{d_X} below). Correspondingly, we define the \emph{$\GG$-cylinder of radius $r$ and center $z_0=(x_0,t_0)$} as the set
\begin{equation}
	Q_r(z_0):=B_r(x_0)\times \Big(t_0-r^2,t_0\Big).
\end{equation}
Moreover, for given constants $\nu, \y, \mu, \vartheta$ with $0 < \nu < \y < \mu < 1$ and $0 < \vartheta < 1$, 
we set 
\begin{equation}
	\label{unit-box+-}
	 Q_r^+(z_0) := B_{\vartheta r}(x_0) \times \Big(t_0-\nu r^2,t_0\Big), \qquad 
	 Q_r^-(z_0) := B_{\vartheta r}(x_0) \times \Big(t_0-\mu r^2,t_0-\y r^2\Big),
\end{equation}
see Figure \ref{figure-harnack}. We use the notation $Q^+$ and $Q^-$ whenever $z_0=0$ and $r=1$. Given the definition above, we are in a position to state the following result.
\begin{theorem}\label{Th-Harnack}
Let $\Omega$ be an open subset of $\R^{N+1}$ and let $u$ be a non-negative solution to $\H u =0$ in $\Omega $ under assumptions \textbf{(H1)},\textbf{(H2)},\textbf{(H3)} and \textbf{(H4)}.  Moreover, we consider three constants $\nu, \y, \mu$ such that $0 < \nu < \eta < \mu < 1$. Then, there exist $\vartheta_0\in (0,1)$, only depending on operator $\H$, and two positive constants $r_0$, $C_H$, depending also on $\nu$, $\eta$, $\mu$, such that
\begin{equation}\label{eq:thm-harnack}
\sup_{Q^{-}_{r}(z_0)}u \leq C_H \inf_{Q^{+}_{r}(z_0)}u,
\end{equation}
for every $z_0 \in \Omega$, for every $\vartheta\leq \vartheta_0$, and for every $r >0$ such that $r \leq r_0$ and $Q_r(z_0) \subset \Omega$ .
\end{theorem}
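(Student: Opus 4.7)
The plan is to combine the fundamental solution $\G^*$ and its two-sided Gaussian bounds from Theorem \ref{Th-Fundamental} with the improved Pallara--Polidoro mean value formula, and then to propagate positivity along a finite chain of admissible cylinders. The target inequality is invariant under the left translation of $\GG$ and the intrinsic dilations $\{\delta_r\}$, so after translating $z_0$ to the origin and rescaling by $\delta_{1/r}$ it suffices to prove \eqref{eq:thm-harnack} on the unit cylinder $Q_1(0)$. The threshold $r \leq r_0$ is used precisely at this step: on the rescaled scale the Hölder-continuous coefficients in \textbf{(H4)} lie in a uniformly small neighborhood of frozen values, so the Gaussian bounds and the mean value formula for the rescaled operator differ from those of a frozen-coefficient Carnot-group operator only by perturbations controlled via the Levi-parametrix estimates.

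On the unit cylinder, applying the mean value representation to a non-negative solution $u$ of $\H u = 0$ yields an inequality of the form
\begin{equation*}
	u(z) \geq c_1 \int_{E(z)} u(\zeta)\, K(z,\zeta)\, d\zeta, \qquad z \in Q_1^+,
\end{equation*}
where $E(z) \subset \{\G^*(z;\cdot) > \lambda_0\}$ is a backward-in-time super-level set of geometry comparable to a sub-cylinder of $Q_1$, and $K$ is a nonnegative kernel. The Gaussian lower bound on $\G^*$ forces $K \geq c_0 > 0$ on a definite portion of $E(z)$, producing a genuine $L^1$-to-pointwise comparison between $u(z)$ and an average of $u$ on a backward-in-time sub-cylinder.

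To pass from this averaged inequality to the pointwise Harnack, I would iterate it along a chain. Set $M = \sup_{Q_1^-} u$ and pick a near-maximizer $\bar z \in \overline{Q_1^-}$; by continuity the super-level set $\{u \geq M/2\}$ has positive measure in a neighborhood of $\bar z$, and applying the local averaged inequality at any $z$ whose averaging set $E(z)$ has substantial overlap with this neighborhood gives $u(z) \geq c\,M$ for a uniform constant. Using \textbf{(H2)}, the Carnot-group structure, and the ordering $0 < \nu < \y < \mu < 1$, one can then construct a finite $\H$-admissible chain of points $\bar z = \zeta_K, \dots, \zeta_0 = z_*$ of length $K$ bounded uniformly in terms of $\nu,\y,\mu$ and $\vartheta_0$, such that each expansion-of-positivity step propagates a positive lower bound on $u$ from $\zeta_{k+1}$ to $\zeta_k$. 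Taking $\vartheta_0$ small enough for the chain to stay inside $Q_1$ yields $u(z_*) \geq c^K M$ at any $z_* \in Q_1^+$, which is \eqref{eq:thm-harnack} with $C_H = c^{-K}$.

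The main obstacle is the expansion-of-positivity step together with the construction of the uniform chain: one must interleave short horizontal CC-displacements (available thanks to the linear independence of $X_1,\dots,X_{m_1}$ at the origin and Chow's theorem) with backward-in-time waiting intervals calibrated to the parabolic scaling, while keeping the entire chain inside $Q_1$; this constraint is what ultimately dictates the choice of $\vartheta_0$. Moreover, quantifying the expansion of positivity requires a careful comparison of sub-level sets of $u$ against level sets of $\G^*$, for which the matching Gaussian upper and lower bounds of Theorem \ref{Th-Fundamental} are essential.
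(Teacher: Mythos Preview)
Your overall architecture---mean value formula plus Harnack chain---matches the paper, but there is one error and one substantive divergence in how you get there.

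The error: $\H$ has variable coefficients, so it is invariant neither under left translation on $\GG$ nor under the dilations $\delta_r$. The paper's reduction to $z_0=0$, $r=1$ is purely notational; the intermediate estimates (Gaussian bounds, Proposition~\ref{Parabolic-Harnack}) are already uniform in center and scale, so no rescaling of the equation is performed and your ``coefficients close to frozen after rescaling'' step is neither needed nor correct as stated.

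The divergence: you want an \emph{averaged} lower bound from the mean value formula, then an expansion-of-positivity (measure-to-pointwise) step, then a chain. The paper instead first establishes a one-step \emph{pointwise} parabolic Harnack inequality, Proposition~\ref{Parabolic-Harnack}: $\sup_{D_r(z_0)} u \leq C_P\, u(z_0)$ on a fixed time-slice $D_r(z_0)$ below $z_0$. This comes directly from the improved mean value formula (Proposition~\ref{prop-meanvalue}) once one shows the Hadamard-descent kernel $M_r^{(m)}$ is two-sidedly bounded on the relevant sets---which is the real work of Section~\ref{Parabolic Harnack inequality} and requires lifting to a free Carnot group so as to compare $\G$ with its parametrix locally (Proposition~\ref{Prop-closetoparametrix}). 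The lower-order terms involving $\div_\GG b - c$, which you do not address, are handled there by the standard $e^{\pm kt}$ substitution. Once Proposition~\ref{Parabolic-Harnack} is available, the invariant Harnack is an explicit chain: connect $x^+$ to $x^-$ by an optimal CC-geodesic, reparametrize over $[0,t^+-t^-]$, sample at $m$ equally spaced times to get $z_0=z^+,\dots,z_m=z^-$ with $z_j \in D_r(z_{j-1})$, and iterate. The chain length $m$ is bounded uniformly in terms of $\nu,\eta,\mu$ and the constants of Proposition~\ref{Parabolic-Harnack}, giving $C_H=C_P^m$.

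Your expansion-of-positivity route is plausible in principle, but the step you yourself flag as the main obstacle is genuinely harder than the paper's bounded-kernel comparison, and your proposal gives no indication of how to carry it out in this sub-Riemannian geometry.
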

}
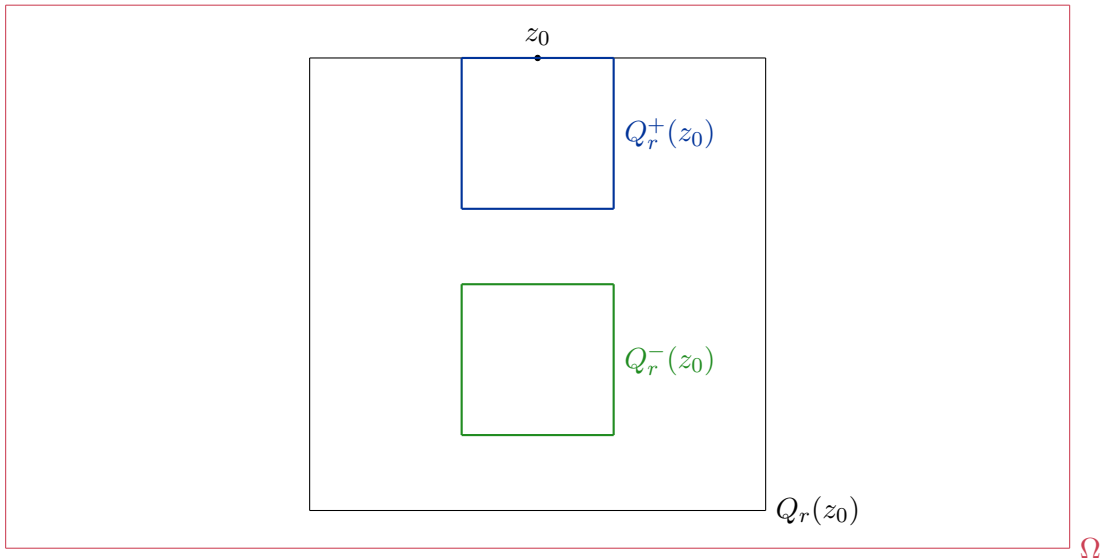
\begin{figure}\label{figure-harnack}
  \centering

  \begin{tikzpicture}[scale =1]
    
    \draw [brickred] (-7,0.7) -- (7,0.7);
    \draw [brickred] (-7,-6.5) -- (7,-6.5);
    \draw [brickred] (-7,0.7) -- (-7,-6.5);
    \draw [brickred] (7,0.7) -- (7,-6.5) node[anchor=west, scale=1]
    {$\O$};
    
    \draw [black] (-3,0) -- (3,0);
    \draw [black] (-3,-6) -- (3,-6);
    \draw [black] (-3,0) -- (-3,-6);
    \draw [black] (3,0) -- (3,-6) node[anchor=west, scale=1]
    {$Q_r(z_0)$};
    
    \filldraw (0,0) circle (1pt) node[above=1pt, scale=1] {$z_0$};
    
    \draw [darkpowderblue, thick](-1,0) -- (1, 0);
    \draw [darkpowderblue, thick](-1,-2) -- (1, -2);
    \draw [darkpowderblue, thick](-1,0) -- (-1, -2);
    \draw [darkpowderblue, thick](1,-0) -- node[anchor=west, scale=1]
    {$Q^+_r(z_0)$}(1, -2);
    
    \draw[forestgreen(web), thick] (-1,-3) -- (1, -3);
    \draw [forestgreen(web), thick](-1,-5) -- (1, -5);
    \draw [forestgreen(web), thick](-1,-3) -- (-1, -5);
    \draw [forestgreen(web), thick](1,-3)  -- node[anchor=west, scale=1]
    {$Q^-_r(z_0)$} (1, -5);

  \end{tikzpicture}

  \caption{Geometric setting of the Harnack inequality (the time variable is represented vertically, and upwardly increasing).}
\end{figure}
\begin{remark}
	Theorem \ref{Th-Harnack} also applies to classical solutions of the stationary operator
	\begin{equation}
		\mathscr{L}_\GG  =\sum_{i,j=1}^{m_1} X_i\big(a_{ij}X_j\big) + \sum_{i=1}^{m_1} b_iX_i+ c.
	\end{equation}
	Hence, the above theorem improves Theorem 1.3 in \cite{pecorella} in that the technical assumption $\div_\GG b - c\geq 0$ is removed.  Such an assumption is often included when applying the mean value formulas, but can be removed by a standard change of variables in the time variable. 
\end{remark}
\subsection{Motivation and strategy of the proof}
Before outlining the structure of the paper, we provide an overview of known results on the maximum principle and the Harnack inequality and we briefly discuss their theoretical interest. We furthermore describe the strategy of the proof of our main results.

A maximum principle for \eqref{HG1} with $C^\infty$ smooth coefficients was first obtained by Bony in \cite{bony1969principe} with a barrier argument, and then extended by Amano in \cite{amano1979maximum} for  $C^1$ coefficients. We also mention \cite{bonfiglioli2006maximum} by Bonfiglioli and Uguzzoni, where the authors proved a maximum principle for \eqref{HG1} with continuous coefficients under the additional assumption that a Picone function exists. We point out that Theorem \ref{Th-maximum} is stronger than \cite{amano1979maximum}, and does not require the existence of a Picone function as in \cite{bonfiglioli2006maximum}.

On the other hand, since Bony's work \cite{bony1969principe} several results for the Harnack inequality are known for operators with smooth coefficients. Concerning operators with H\"older continuous coefficients, we mention Bramanti, Brandolini, Lanconelli and Uguzzoni \cite{bramanti2010non}: here, the authors prove a \emph{parabolic} Harnack inequality for positive solutions to \eqref{HG1} with $c=0$, adapting the method introduced by Krylov and Safonov in \cite{krylov1981certain}. 
In this paper we also prove a parabolic Harnack inequality (see Proposition \ref{Parabolic-Harnack} below) by extending the result of \cite{bramanti2010non} to operators with $c\neq 0$. The main contribution of this paper is the derivation of an \emph{invariant} Harnack inequality expressed in terms of future and past cylinders, in the same spirit of the one obtained in \cite{malagoli} for uniformly parabolic operators. To the best of the authors’ knowledge, this result is new when dealing with operator \eqref{HG1}. This is obtained by adapting the optimal control strategy introduced in \cite{boscain}, and strongly relies on the sub-Riemannian geometric properties induced by the vector fields $\{X_i\}_{i=1}^{m_1}$.

\medskip

The proofs of the maximum principle and the parabolic Harnack inequality rely on a technique involving mean value formulas originally developed in the context of harmonic functions, though it is not straightforward to adapt this method to our case. We recall that mean value formulas for \eqref{HG1} have been recently established by Pallara and Polidoro in \cite{pallara2023meanvalueformulasclassical}, by employing the fundamental solution $\G^*$ of the adjoint operator of \eqref{HG1}. In this work the authors managed to extend to the case of H\"older coefficients the results previously known only for smooth coefficients. The main difficulty one encounters when dealing with non-smooth coefficients is due to the loss of regularity of the fundamental solution: while H\"ormander's condition theorem ensures smoothness when the coefficients are smooth, the fundamental solution fails to be even $C^1$ when the coefficients are merely H\"older continuous (as we will see in Definition \ref{Def-fundamental}). To overcome this issue, in \cite{pallara2023meanvalueformulasclassical} the authors used some of the tools developed in the last years in the field of Geometric Measure Theory on Carnot groups (see \cite{cassano2016some} for an overview of this topic). In order to employ the mean value formula in \cite{pallara2023meanvalueformulasclassical}, we first construct the fundamental solution of \eqref{HG1} and of its adjoint by means of Levi's parametrix method \cite{levi1907sulle,levi1909problemi} (see Theorem \ref{Th-Fundamental} below). We also derive several key properties of the fundamental solution (most notably, Gaussian estimates), which are then employed to prove the mean value formula in our setting.

The main difficulty encountered in extending this “mean value formula technique” to our subelliptic evolutive setting (in particular, in the proof of the parabolic Harnack inequality) lies in the behavior of the kernel appearing in the formula. More precisely, the kernel is not bounded from above and below by some strictly positive constants (an issue that already arises in the case of the heat operator on $\R^2$). This prevents us from taking advantage of the classical elementary arguments developed for harmonic functions to establish the maximum principle and the parabolic Harnack inequality. To overcome this problem, we employ the technique introduced by Kuptsov  in \cite{Kupcov}, based on Hadamard's descent method, to construct an improved bounded kernel. The resulting kernel is strictly positive (up to a set of zero measure), thereby allowing us to make use of the classical theory developed for harmonic functions. A parabolic Harnack inequality obtained via Hadamard’s descent method was first established by Garofalo and Lanconelli in \cite{GarofaloLanconelli-1989} and later by Malagoli, Pallara and Polidoro in \cite{malagoli} for uniformly parabolic operators, and by Polidoro \cite{polidoro-parametrix} and by one of the authors \cite{rebucci24} for Kolmogorov operators. Here we follow the approach introduced by Polidoro in \cite{polidoro-parametrix}, although several technical difficulties need to be addressed in order to obtain the desired bounds on the kernel. This is, in particular, due to the fact that there is no explicit expression for the fundamental solutions $\Gamma$ of the class of operators studied here. In order to tackle the problem, we employ the equivalence of sublaplacians on \emph{free Carnot groups} derived by Bonfiglioli and Uguzzoni in \cite{bonfiglioli2004families}. To make use of this equivalence on a general Carnot group $\GG$ (\emph{i.e.} whenever $\GG$ is not free), we apply the Rothschild and Stein lifting technique \cite{rothschild1976hypoelliptic}, similar to the construction by Bonfiglioli, Lanconelli and Uguzzoni in \cite{bonfiglioli2002uniform}. Thanks to this lifting procedure, we can derive the desired estimates and the parabolic Harnack inequality for the lifted operator, which in turn implies the parabolic Harnack inequality for the class of operators under study. The proof of the invariant Harnack inequality presented in Theorem \ref{Th-Harnack} requires repeated application of the parabolic Harnack inequality to construct a Harnack chain. We finally remark that the Carnot–Carathéodory distance associated to the vector fields $\{X_i\}_{i=1}^{m_1}$ plays a key role in this last step.

\subsection{Outline of the paper}
This paper is organized as follows: in Section \ref{Carnot} we  recall some known facts on Carnot groups and the Carnot-Caratheodory distance; in Section \ref{Fundamental} we prove Theorem \ref{Th-Fundamental} concerning the existence of the fundamental solution of operator $\H$, together with Gaussian upper and lower bounds and other properties; in Section \ref{Mean-Value-Formula} we introduce the mean value formula and a suitable improvement, that will allow us to prove Theorem \ref{Th-maximum} in Section \ref{Maximum}; in Section \ref{Parabolic Harnack inequality} we make use of the lifting tecnique to prove a “parabolic-type” Harnack inequality, that will be employed in Section \ref{Harnack} to prove the invariant one stated in Theorem \ref{Th-Harnack}; finally, in Appendix \ref{Comparison} we prove a comparison principle for solutions to \eqref{HG1} which is used in Section \ref{Fundamental}.

\medskip  We give some notational remarks. In the following we consistently refrain from restating the phrase  'for $i,j=1,\ldots,m_1$', as it is always implied. Moreover, $\mathcal{H}^{d}$ will denote the $d$-dimensional Hausdorff measure, $\nu$ the Euclidean inward normal unit vector and $\langle \cdot,\cdot\rangle$ the usual inner product.

\section{Preliminaries on Carnot groups}\label{Carnot}
In this section we describe some properties concerning the Carnot group that defines the subriemannian setting of \eqref{HG1}. For a more detailed exposition of this topic we refer to the monograph \cite{bonfiglioli2007stratified}. We subsequently introduce the Carnot-Caratheodory distance associated to the vector fields $X = \{ X_1, \ldots, X_{m_1} \}$ in \eqref{HG1}. We eventually describe Rothschild and Stein lifting technique \cite{rothschild1976hypoelliptic} to lift $\GG$ to a free group which preserves the homogeneous structure of $\GG$.

\medskip

Let $\GG=\big(\rn,\circ,\d_r\big)$ be the Carnot group induced by the smooth vector fields $\{X_i\}_{i=1}^{m_1}$ (whose existence is given by Theorem 1.7 in \cite{bonfiglioli2020hormander}, thanks to \textbf{(H1)},\textbf{(H2)} and \textbf{(H3)}). We first recall that a Carnot group is in particular \textit{homogeneous Lie group}, namely a group with a smooth composition law $\circ$ and a dilation law $\{\d_r\}_{r>0}$ that is an automorphism of the group, i.e.
\begin{equation}
	\d_r(x\circ y)=\big(\d_r(x)\big)\circ \big(\d_r(y)\big),\qquad \text{for every}\,\;x,y \in \rn,\;r>0.
\end{equation}
By \textbf{(H1)}, \textbf{(H2)} and \textbf{(H3)}, $\mathfrak{g}$ is the Lie algebra of the group $\GG$. Moreover, \textbf{(H3)} induces a \emph{stratification} on $\mathfrak{g}$
\begin{equation}\label{stratification}
	\mathfrak{g} = V_1 \oplus \ldots \oplus V_\kappa,
\end{equation}
where $V_1=\span \{X_i\}^{m_1}_{i=1}$, $V_{k+1}=\span \big\{[X,Y], X \in V_1, Y \in V_k\big\}$, for $k=1,\ldots, \kappa-1$, and $\big\{[X,Y], X \in V_1, Y \in V_\kappa\big\}=\emptyset$, with $[X,Y]=XY-YX$.

We observe that the stratification \eqref{stratification} implies that the dilations  $\{\d_r\}_{r>0}$ take the following form (see Remark 1.4.2 in \cite{bonfiglioli2007stratified})
\begin{equation}\label{dilations-prop}
	\d_r(x)=\left(r x^{(1)}, r^2 x ^{(2)},\ldots,r^\kappa x^{(\kappa)}\right), \qquad x^{(k)}\in \R^{m_k}, \; k=1,\ldots,\kappa,
\end{equation}
where $m_k$ denotes the dimension of $V_k$, for every $k=1,\ldots,\kappa$. We will refer to the number
\begin{equation}
	Q:=\sum_{k=1}^{\kappa} k\,m_k, 
\end{equation}
as \emph{homogeneous dimension} of $\GG$. By H\"ormander's rank condition we notice that $\sum_{k=1}^\kappa m_k = N$, inferring that $Q\geq N$. In particular, the equality holds if and only if $\GG$ is the Euclidean group. A group $\GG$ with the properties above is often referred to as Carnot group of step $\kappa$ and $m_1$ generators.

As it will be useful in Section \ref{Fundamental}, we recall that Proposition 1.3.5 in \cite{bonfiglioli2007stratified} provides us with the following characterization of smooth $\delta_r$-homogeneous vector fields: let $\delta_r$ be as in \eqref{eq:dilH3} and let $X=\sum_{j=1}^N \phi_j\p_{x_j}$ be a smooth vector field on $\rn$. Then $X$ is $\delta_r$-homogeneous of degree $n \in \R$ if and only if $\phi_j$ is a polynomial function $\delta_r$-homogeneous of degree $\sigma_j - n$ (unless $\phi_j \equiv 0$). In particular, by \textbf{(H3)} we deduce that the coefficients $\phi_j^i$ of the smooth vector fields $\{X_i\}_{i=1}^{m_1}$ are polynomial functions.

\medskip

We now introduce a homogeneous distance on $\GG$, \emph{i.e.} the \textit{$X$-control distance} (also known as the \textit{Carnot–Carathéodory distance}) associated with the family of vector fields $X = \{ X_1, \ldots, X_{m_1} \}$. This distance will play a crucial role in the proof of the maximum propagation and the Harnack inequality stated in Theorem \ref{Th-maximum} and in Theorem \ref{Th-Harnack}, respectively.
\begin{definition}[Carnot-Caratheodory distance]\label{CC-distance}
	The Carnot-Carathéodory distance (CC, in short) $d_X$ associated to the set of H\"{o}rmander vector fields $X = \lbrace X_1, \ldots, X_{m_1} \rbrace$ is defined as follows. Let $\mathcal{C}$ denote the set of the absolutely continuous maps $\gamma:[0,1] \rightarrow \R^N$ satisfying 
	\begin{equation*}
		\dot{\gamma}(\t)=\sum_{i=1}^{m_1}\alpha_i(\t)X_i\big(\gamma(\t)\big),
	\end{equation*}
	for a.e. $\t\in[0,1]$, with $\alpha=(\alpha_1,\ldots,\alpha_{m_1})$ real-valued function on $[0,1]$ such that
	\begin{equation}\label{condition-alphaj}
		 \|\a\|_{\infty}:= \sup_{\t\in[0,1]}\left(\sum_{j=1}^{m_1}|\alpha_j(\t)|^2\right)^{\frac{1}{2}}<+\infty.
	\end{equation}	
	The CC-distance is then defined as 
	\begin{equation*}\label{d_X}
		d_X(x,y):=\inf\Bigg\lbrace \int_0^1 |\a(\t)|d\t \, : \, \gamma \in \mathcal{C},\,\gamma(0)=x,\; \gamma(1)=y\Bigg\rbrace.
	\end{equation*}
	In the following we will refer to $\mathcal{C}$ as the set of \emph{$X$-trajectories on $\R^N$}, and the function $\t\mapsto\a(\t)$ will be referred to as \emph{control} of the X-trajectory $\gamma$. 
\end{definition}
We now list some properties of the CC-distance that will be useful in the sequel. 
\begin{itemize}
	\item The CC-distance can equivalently be defined in terms of \emph{X-subunit curves}, that is, by requiring $\|\a\|_{\infty}\leq 1$ (see, \emph{e.g.}, Proposition 7.12 in \cite{biagi2019introduction}).
	\item Another equivalent definition of the CC-distance is the following
	\begin{equation}\label{p=2}
		d^{(p)}_X(x,y) = \begin{cases}
			\displaystyle\inf_{\gamma \in \mathcal{C}}\left(\int_0^1 |\a(\t)|^p\, d\t \right)^{1/p}		
			& 
			\text{if} \hspace{1mm} p < +\infty, \\
			\displaystyle\inf_{\gamma \in \mathcal{C}}\sup_{\t\in[0,1]}|\a(\t)|, & \text{if} \hspace{1mm} p = + \infty.
		\end{cases}
	\end{equation}
	It is clear that $d_X \equiv d_X^{(1)}$. Moreover, the following property holds
	\begin{equation}\label{eq:d1-dp}
		d^{(p)}_X=d_X, \qquad \forall p \in [1,+\infty].
	\end{equation}
	see for instance Proposition 7.13 in \cite{biagi2019introduction}.
	\item The CC-distance associated to a system of H\"{o}rmander vector fields is a metric (see Theorem 6.22 in \cite{biagi2019introduction}). 
	
	\item There exists $k_1\geq 1$ such that 
	\begin{equation}\label{k1}
		d_X(x,\x)\leq k_1\big(d_X(x,y) + d_X(y,\x)\big),\qquad \text{for every} \;x,\x,y \in \rn.
	\end{equation}  
	If $k_1=1$, this is the classical triangular inequality; for such reason this property is usual referred to as \emph{pseudo-triangular inequality}.
	\item The CC-distance is $\d_r$-homogeneous of degree $1$, hence the Euclidean metric is continuos w.r.t. the metric induced by the CC-distance (see Proposition 5.15.1 in \cite{bonfiglioli2004fundamental}).	
\end{itemize}
In order to study the evolution operator \eqref{HG1} on the Carnot group $\GG$ (that is relevant to the space variable $x\in\rn$), we introduce the  $\GG$-parabolic distance on $\rnn$ 
\begin{equation}\label{GG-distance}
	d(z,\z)=d\big((x,t),(\x,\t)\big):=d_X(x,\x)+ |t-\t|^\frac{1}{2},\qquad \forall z=(x,t),\z=(\x,\t)\in\rnn.
\end{equation}
We are now in a position to define the Hölder spaces naturally induced by operator $\H$.
\begin{definition}[H\"older continuous function]\label{H\"older continuous function}
	Let $\a \in (0,1]$. We say that  $u:\rnn\rightarrow \R$ is an $\a$-H\"older continuous function with exponent $\a$ in $\rnn$ if there exists a positive constant $c>0$ such that 
	\begin{equation}
		\left|u(z)-u(\z)\right|\leq c\,d(z,\z)^\a,\qquad \forall z,\z \in \rnn.
	\end{equation}
\end{definition}
This definition of H\"older space plays a central role in our analysis: in Section \ref{Fundamental} we prove that, under the assumption that the coefficients belong to this H\"older space (namely \textbf{(H4)}), we are able to apply the parametrix method. Hence, we are able to prove the existence of the fundamental solution $\G$ to $\H$ as well as the fundamental solution $\G^*$ to $\HH$, along with upper and lower bounds. This, in turn, enables us to use the mean value formula developed in \cite{pallara2023meanvalueformulasclassical}, which we then refine in Section \ref{Mean-Value-Formula} and exploit in the proof of Theorem \ref{Th-maximum} and Theorem \ref{Th-Harnack}. Mean value formulas hold for divergence form operators: accordingly, we introduce the differential operators $\div _{\GG}$ and $\nabla _{\GG}$, that are the natural extension of the divergence and gradient operator in the non-Euclidean setting of $\GG$. In order to define these operators, we agree to identify a \emph{horizontal section} $F =\sum_{i=1}^{m_1} F_i X_i$ with its canonical coordinates $F =(F_1,\ldots,F_{m_1})$; moreover, we define as $C^1_{\GG}$ the space of functions with continuous first order Lie derivatives. Then, for every $F \in C^1_{\GG}\big(\rnn,\R^{m_1}\big)$  and for every real valued $f \in C^1_{\GG}\big(\rnn\big)$ we set 
\begin{equation}\label{divG}
	\div_{\GG}F:= \sum_{i=1}^{m_1} X_i F_i,\qquad \nabla_{\GG}f:=\sum_{i=1}^{m_1} \big(X_i f\big)X_i.
\end{equation}
Employing these operators, we can write \eqref{HG1} in $\GG$-divergence form as follows
\begin{equation}\label{HG1-div}
	\H u =\div_\GG \big(A\nabla_\GG u\big)+ \langle b,\nabla_\GG u\rangle + c\,u-\p_t u=f.
\end{equation}

\medskip

We conclude this section by introducing the lifting technique that will be employed in Section \ref{Parabolic Harnack inequality} to establish a parabolic-type Harnack inequality.  We first recall that a Carnot group $\GG$ is a \textit{free Carnot group} if the Lie algebra of $\GG$ is isomorphic to a free nilpotent Lie algebra with $r$ generators of step $s$, for some $r \geq 2$ and $s \geq 1$. We then let $\{X_i\}_{i=1}^{m_1}$ be the system of vector fields generating the Carnot group $\GG$ of  homogeneous dimension $Q$  and step $\kappa$. Following Theorem 8.3 in \cite{bonfiglioli2002uniform}, $\GG$ can be lifted to a free Carnot group $\tt \GG$ in the sense of Rothschild and Stein \cite{rothschild1976hypoelliptic}. More precisely, there exists $\tilde{N}\in \N$ and a free Carnot group $\tilde{\GG}=\big(\rnm\times \rn,\tt \circ,\tt \d_r\big)$ with homogeneous dimension $\tt Q$ and Jacobian basis $\big\{\tilde X_i\big\}_{i=1}^{m_1}$, such that the following lifting property holds: let $\pi : \mathbb{R}^{\tt N} \times \mathbb{R}^{{N}} \to \mathbb{R}^N$ denote the canonical projection onto the last $N$ coordinates, then 
\begin{equation}\label{lifting}
	\tt X_i u\big(\pi(\tt x, x)\big) = X_i u(x),\qquad (\tt x, x) \in \rnm \times \rn,
\end{equation}
for every $u \in C^\infty\big(\mathbb{R}^N\big)$ and for every $i=1.\ldots,m_1$. We now choose a suitable homogeneous distance on  $\tilde \GG$. Since the Lie algebra of $\tt\GG$ is stratified as in \eqref{stratification}, we have the following decomposition 
\begin{equation}
	\rnm\times \rn = \R^{\tt m_1}\ti\R^{\tt m_2}\ti\ldots\ti \R^{\tt m_\kappa}\ti\R^{m_1}\ti\R^{m_2}\ti\ldots\ti \R^{m_\kappa},
\end{equation}
for some $\tt m_1,\ldots,\tt m_\kappa \in \N$, where $ m_1,\ldots, m_\kappa$ are as in \eqref{dilations-prop}. Accordingly, we decompose any $(\tt x, x)\in\rnm \times\rn$ as
\begin{equation}
	\left(	\tt x^{(1)},\tt x ^{(2)},\ldots,\tt x^{(\kappa)}, x^{(1)},x ^{(2)},\ldots, x^{(\kappa)}\right) \qquad \tt x^{(k)}\in \R^{\tt m_k}, x^{(k)}\in \R^{m_k}, \; k=1,\ldots,\kappa,
\end{equation}
and we set
\begin{equation}\label{tilde-distance}
	d_{\tt \GG}(\tt x,x) := \left(\sum_{i=1}^\kappa \sum_{j=1}^{\tt m_i} |\tt x_j^{(i)}|^{\frac{2}{i}}\right)^{\frac{1}{2}} \!\!+ d_X(x,0), 
\end{equation} 
where $d_X$ is the CC-distance on $\GG$ introduced in Definition \ref{CC-distance}. 

If $\GG$ is a free Carnot group, the following remarkable property holds: there exists a diffeomorphism $T_A : \R^N \rightarrow \R^N$ such that 
\begin{equation*}
	\G_A\big((x,t),(\xi,\tau)\big)=J_A\, \G_0\Big(T_A\big(\x^{-1}\circ x	\big),t-\t\Big),
\end{equation*}
where $\G_0$ is the fundamental solution of the canonical heat operator $\Delta_{\GG}-\partial_t$, $\G_A$ is the fundamental solution of of the constant coefficients operator 
\begin{equation}\label{marina di acate}
	\mathscr{H}_A=\sum_{i,j=1}^{m_1}a_{ij}X_iX_j-\p_t,
\end{equation}
and $J_A$ is the Jacobian determinant of $T_A$. It is clear that $T_A$ acts as a classical change of basis in our subelliptic setting. We remark that it is possible to derive ad hoc uniform estimates for the diffeomorphism $T_A$, see for instance \cite{bonfiglioli2004families}. Hence, lifting $\GG$ to  $\tt \GG$ allows us to take advantage of the previous correspondence for the lifted fundamental solution $\tilde{\G}_A$ (see equation \eqref{mazzareddi}) to derive precise estimates for $\tilde{\G}_A$.

\section{Fundamental solution}\label{Fundamental}
In this section we prove the existence and some useful properties of the fundamental solution $\G$ associated to operator $\H$. As previously mentioned, the main tool that allows us to employ the mean value formula in \cite{pallara2023meanvalueformulasclassical} and to prove our main results is the fundamental solution $\G^*$ of the adjoint operator 
\begin{equation}\label{HG1star}  
	\HH u =\sum_{i,j=1}^{m_1} X_i\big(a_{ij}X_j\big)u - \sum_{i=1}^{m_1} b_iX_iu+ \left(c-\sum_{i=1}^{m_1}X_ib_i\right)u+\p_t u.
\end{equation}
We give now the definition of fundamental solution for \eqref{HG1} and for \eqref{HG1star}.
\begin{definition}[Fundamental solution]\label{Def-fundamental}
	We say that the real valued function $\G$ defined on $\big(\rnn\times\rnn\big)\setminus \big\{(z,z)\in \rnn\times \rnn\big\}$ is a fundamental solution for \eqref{HG1} if:
	\begin{enumerate}
		\item for every $\z\in\rnn$ the function $ \G(\cdot,\z)$ is a classical solution to $ \H\G(\cdot,\z)\!=\!0$ in $\rnn \setminus \{\z\}$, and belongs to $L^1_\loc \big(\rnn \setminus \{\z\}\big)$;
		\item for every $\phi \in C_c\big(\rn\big)$ the function
		\begin{equation}\label{delta}
			u(x,t):=\int_{\rn} \G(x,t,\xi,\t)\phi(\xi)d\xi,\qquad (x,t) \in \rn,
		\end{equation}
		is a classical solution to $\H u=0$ in $\rn\times (\t,+\infty)$, and satisfies 
		\begin{equation}
			\lim_{(x,t)\to(\x,\t)} u(x,t) = \phi(\x),\qquad\text{for every}\;\;\x\in\rn. 
		\end{equation} 
	\end{enumerate}
	We say that $\G^*$ is a fundamental solution for \eqref{HG1star} if it satisfies the dual statement.
\end{definition}
We now focus our attention on proving the existence of $\G$ and $\G^*$ under the assumption that the coefficients are H\"older continuous. In addition, we establish several fundamental properties that will play a crucial role in our analysis.
\begin{theorem}[Fundamental solution]\label{Th-Fundamental}
	Assume that \textbf{(H1)},\textbf{(H2)},\textbf{(H3)} and \textbf{(H4)} hold. Then
	\begin{enumerate}
		\item There exists a fundamental solution $\G$ to \eqref{HG1}.\label{item1}
		\item The following Gaussian upper bounds hold for every $T>0$ and $0<t-\tau\leq T$ 
		\begin{equation}\label{Gaussian-estimates}
			\G(x,t,\x,\t)\leq  \frac{c_T}{\big(t-\t\big)^{\frac{Q}{2}}} \;\exp{\left(-c_u\frac{d_X(x,\x)^2}{(t-\t)}\right)},
		\end{equation}
		\begin{equation}\label{Gaussian-estimates-1}
			|X_i\G(x,t,\x,\t)|\leq\frac{ c_T }{\big(t-\t\big)^{\frac{Q+1}{2}}}\exp{\left(-c_u\frac{d_X(x,\x)^2}{(t-\t)}\right)},
		\end{equation}
		\begin{equation}\label{Gaussian-estimates-2}
			|X_iX_j\G(x,t,\x,\t)|,|\p_t\G(x,t,\x,\t)|\leq \frac{ c_T }{\big(t-\t\big)^{\frac{Q+2}{2}}}\exp{\left(-c_u\frac{d_X(x,\x)^2}{(t-\t)}\right)},
		\end{equation}
		for some $c_u>0$ (depending only on structural assumptions) and $c_T>0$ (depending also on $T$). \label{item2}
		\item For any $(x,t),(\x,\t) \in \rnn$ with $t\leq \t$, we have that $\G(x,t,\x,\t)=0$. \label{item3}
		\item Let $f$ be a continuous function on $\rn \times (T_1,T_2)$ such that for every $x,\x$ belonging to a compact set $\mathcal K \subset  \rn$, and for every $t \in (T_1,T_2)$, there exists $c_\mathcal K=c_\mathcal K(\mathcal K)>0$ and $0<\y\leq 1$ satisfying
		\begin{equation}
			|f(x,t)-f(\x,t)|\leq c_\mathcal K\, d_X(x,\x)^\y.
		\end{equation}
		Moreover, suppose that $f$ has an exponential growth uniform in $t$, namely that there exist $h_1,h_2>0$ such that
		\begin{equation}\label{f-growth-exp}
			|f(x,t)|\leq h_2\,\exp\Big(h_1 d_X(x,0)^2\Big).
		\end{equation}
		Let $g$ be a continuous function on $\rn$ with the same exponential growth
		\begin{equation}\label{g-growth-exp}
			|g(x)|\leq h_2 \, \exp\Big(h_1 d_X(x,0)^2\Big).
		\end{equation}
		Then, there exists $T_1<T<T_2$ (only depending on the growth constant $h_1$) such that the function
		\begin{equation}\label{u-conv}
			u(x,t):=\int_{\rn}\G(x,t,\xi,T_1)g(\xi)d\xi - \int_{T_1}^t\int_{\rn}\G(x,t,\xi,\t)f(\xi,\t)d\xi d\t,
		\end{equation}
		is a classical solution to
		\begin{equation}\label{cauchy}
			\begin{cases}
				\H u = f,\qquad &\mathrm{in}\;\rn\times(T_1,T),\\      
				u(x,T_1)=g(x),\qquad &\mathrm{in}\;\rn,
			\end{cases}
		\end{equation}
		as it solves $\H u = f$ according to Definition \ref{Def-classical}, and it takes the initial condition in the following sense
		\begin{equation}\label{datoiniziale}
			\lim\limits_{t\rightarrow T_1}\left( \int_{\rn}\G(x,t,\xi,T_1)g(\xi)d\xi - \int_{T_1}^t\int_{\rn}\G(x,t,\xi,\tau)f(\xi,\tau)d\xi d\tau\right)=g(x).
		\end{equation} 
		\label{item4}
		\item $\G$ is nonnegative, and moreover there exists $S>0$ (depending only on structural assumptions) such that $\sup_{t\geq \t+1} \G(x,t,\x,\t)=S$.\label{item5}
		\item If $c(z)=c$ is constant, then 
		\begin{equation}\label{normalization}
			\int_{\rn}\G(x,t,\x,\t)d\x = \exp\big(c\,(t-\t)\big),\qquad \forall x \in\rn, \t <t.
		\end{equation}
		\label{item6}
		\item The reproduction property holds: for every $x,\x\in\rn$ and $t,\t\in\R$ with $\t<s<t$ we have
		\begin{equation}\label{reproduction}
			\G(x,t,\x,\t) = \int_{\rn} \G(x,t,y,s)\G(y,s,\x,\t)dy.
		\end{equation}
		\label{item7}
		\item The following Gaussian lower bound holds for every $T>0$ and $0<t-\tau\leq T$ 
		\begin{equation}\label{Gaussian-estimates-lower}
			\G(x,t,\x,\t)\geq  \frac{\tilde c_T}{\big (t-\t\big)^{\frac{Q}{2}}} \;\exp{\left(-c_l\frac{d_X(x,\x)^2}{(t-\t)}\right)},
		\end{equation}
		for some $c_l,\tilde c_T>0$ (depending on structural assumptions and $T$).\label{item8}
		\item If the exponential growth of $f$ and $g$ in \eqref{f-growth-exp} and \eqref{g-growth-exp} satisfies
		\begin{equation}\label{h1}
			\min \left(\frac{c_u}{2h_1(2k_1^2-1)},\frac{3c_u}{h_1}\right)>T.
		\end{equation}
		where $T$ is the constant given by \emph{\ref{item4}.}, $c_u$ is the same as in \emph{\ref{item2}.}, and $k_1$ is given by \eqref{k1}, then  there exist $C,M>0$ such that for every $(x,t)\in\rn \times (T_1,T)$ the function $u$ defined in \eqref{u-conv} verifies 
		\begin{equation}\label{egrowth}
			|u(x,t)|\leq C \exp\Big(Md_X(x,0)^2\Big).
		\end{equation}
		Furthermore, there exists at most one classical solution $u$ to \eqref{cauchy} such that 
		\begin{equation}\label{integralestgr}
			\int_{T_1}^{T}\int_{\rn} |u(x,t)|\exp\Big(-k\,d_X(x,0)^2\Big)dxdt<+\infty,
		\end{equation}
		for some $k>0$.\label{item9}
		\item There exists a fundamental solution $\G^*$ to \eqref{HG1star} satisfying the dual properties of the previous statements. Moreover, it holds
		\begin{equation}\label{index}
			\G^*(x,t,\x,\t)=\G(\x,\t,x,t),\qquad \text{for every }\;\;(x,t),(\x,\t)\in\rnn, (x,t)\neq (\x,\t).
		\end{equation} \label{item10}
	\end{enumerate}
\end{theorem}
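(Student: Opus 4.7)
The plan is to adapt Levi's parametrix method to the subelliptic setting. As a first step, I would freeze the coefficients of $\H$ at an arbitrary point $\z=(\x,\t)$ and consider the constant-coefficient operator $\mathscr{H}_{A(\z)}=\sum_{i,j}a_{ij}(\z)X_iX_j-\p_t$ from \eqref{marina di acate}. On a free Carnot group this operator has a fundamental solution $\G_{A(\z)}$ with the explicit form recalled in the introduction, namely $J_{A(\z)}\,\G_0(T_{A(\z)}(\x^{-1}\circ x),t-\t)$, where $\G_0$ is the fundamental solution of $\Delta_\GG-\p_t$ and $T_{A(\z)}$ is the diffeomorphism of \cite{bonfiglioli2004families}. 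For a general Carnot group $\GG$ I would first invoke the Rothschild--Stein lifting \eqref{lifting} to a free Carnot group $\tt\GG$, construct the lifted parametrix there, and then descend to $\GG$ by integrating out the auxiliary variables, in the spirit of \cite{bonfiglioli2002uniform}. Combined with the uniform estimates for $T_{A(\z)}$ from \cite{bonfiglioli2004families} and the Gaussian bounds for $\G_0$, this yields a parametrix $Z(z,\z)$ satisfying the Gaussian estimates required in \eqref{Gaussian-estimates}--\eqref{Gaussian-estimates-2} for itself and its Lie derivatives.

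Next I would seek $\G(z,\z)=Z(z,\z)+\int_\t^t\!\!\int_{\rn}Z(z,w)\Phi(w,\z)\,dw$. Imposing $\H\G(\cdot,\z)=0$ and noting that $\H Z(\cdot,\z)$ vanishes at $z=\z$ up to a H\"older-small remainder (by \textbf{(H4)}) gives the Volterra equation $\Phi=\H Z+ \H Z * \Phi$. The $\a$-H\"older continuity of the coefficients makes the effective kernel $\H Z$ gain a factor $d(z,\z)^\a$ with respect to the worst-case singularity, so the Neumann series $\Phi=\sum_{k\ge 1}(\H Z)^{*k}$ converges absolutely with Gaussian-type bounds. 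Substituting back gives item \ref{item1}, propagating the bounds through each term of the series yields item \ref{item2}, and item \ref{item3} follows from the support properties of $Z$ and $\Phi$. For item \ref{item4}, I would verify that $\int Z(x,t,\x,T_1)g(\x)d\x$ recovers $g$ as $t\downarrow T_1$ by a standard approximate-identity argument based on the Gaussian bound for $Z$, while the potential term in \eqref{u-conv} is handled by the classical H\"older-in-$\x$ argument that justifies differentiating under the integral sign and yields $\H u = f$ pointwise. The exponential growth hypotheses \eqref{f-growth-exp}--\eqref{g-growth-exp} give absolute convergence provided $T-T_1$ is sufficiently small in terms of $h_1$ and $c_u$.

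Items \ref{item5}--\ref{item7} follow from general principles once items \ref{item1}--\ref{item4} are established: positivity of $\G$ is a consequence of the comparison principle of Appendix \ref{Comparison} applied to $-\G$ together with a truncation/approximation scheme, and the uniform bound $\sup_{t\ge\t+1}\G=S$ reads off from \eqref{Gaussian-estimates}; the normalization \eqref{normalization} is obtained by observing that both $v(x,t)=\int\G(x,t,\x,\t)d\x$ and $e^{c(t-\t)}$ solve the Cauchy problem $\H v=0$, $v|_{t=\t}=1$, so uniqueness forces equality; the reproduction property \eqref{reproduction} follows analogously, since as functions of $(x,t)$ both sides solve $\H v=0$ with the same datum $\G(\cdot,s,\x,\t)$ at time $s$.

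The Gaussian lower bound in item \ref{item8} is, in my view, the main obstacle. I would first obtain a local bound in the regime $d_X(x,\x)^2\lesssim t-\t$ by bounding $|\G-Z|$ via the Levi series and exploiting the explicit strict positivity of $Z$ on small CC-balls (again transported from $\G_0$ via $T_{A(\z)}$ on $\tt\GG$); then extend to arbitrary $(x,\x)$ by a chaining argument based on \eqref{reproduction}, joining $x$ to $\x$ by a CC-geodesic and applying the local bound along a finite sequence of short-time steps whose number is controlled by $d_X(x,\x)^2/(t-\t)$. The product of the local constants then produces the Gaussian factor with exponent $c_l$. For item \ref{item9}, I would estimate the convolutions in \eqref{u-conv} using \eqref{Gaussian-estimates} and the elementary inequality $-c_u d_X(x,\x)^2/(t-\t)+h_1 d_X(\x,0)^2\le M d_X(x,0)^2$, which is valid under \eqref{h1} by virtue of the pseudo-triangular inequality \eqref{k1}; uniqueness within the class \eqref{integralestgr} then follows by applying the comparison principle of Appendix \ref{Comparison} to the difference of two solutions. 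Finally, item \ref{item10} is obtained by running the same parametrix scheme for the adjoint $\HH$ (the H\"older bounds on $X_ja_{ij}$ and $X_ib_i$ required for this are exactly the content of \textbf{(H4)}), and the reciprocity \eqref{index} is proved by integrating the Green--type identity $\G^*(\cdot,\y)\,\H\G(\cdot,\z)-\G(\cdot,\z)\,\HH\G^*(\cdot,\y)=0$ over a slab $s_1<s<s_2$ and letting $s_1\to\t$ and $s_2\to t$, using the Dirac-delta behaviour of both fundamental solutions.
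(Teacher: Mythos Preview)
Your overall parametrix strategy for items \ref{item1}--\ref{item4} and \ref{item8} is essentially the paper's approach (the paper cites \cite{bonfiglioli2002uniform} directly for the constant-coefficient parametrix rather than carrying out the lifting explicitly, but this is cosmetic), and the chaining argument for the lower bound is exactly right. Two points, however, need repair.

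First, in items \ref{item6}--\ref{item7} you appeal to ``uniqueness'' to identify two solutions of the same Cauchy problem, but uniqueness is item \ref{item9}, which you have not yet proved; as written the argument is circular. The paper avoids this by using only the elementary comparison principle of Appendix \ref{Comparison}, which requires checking that the difference of the two candidates vanishes at spatial infinity. For the reproduction property this decay is read off from the Gaussian upper bound \eqref{Gaussian-estimates}; for the normalization \eqref{normalization} the argument is more delicate (the paper first proves $\int\Gamma\,d\xi\le 1$ via a barrier and test functions, then upgrades to equality by a second comparison). Simply saying ``both solve $\H v=0$ with datum $1$'' is not enough, because it is not a priori clear that $\int\Gamma(x,t,\xi,\tau)\,d\xi$ tends to $1$ as $|x|\to\infty$.

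Second, and more seriously, your proof of the uniqueness half of item \ref{item9} has a genuine gap. The comparison principle in Appendix \ref{Comparison} applies on $\R^N$ only when $\limsup w\le 0$ at spatial infinity, a pointwise condition. Neither the exponential growth bound \eqref{egrowth} nor the integral condition \eqref{integralestgr} implies this: a function in either class can be unbounded. The paper instead integrates the Green identity \eqref{green} for $w=u-v$ against $h(\xi)\Gamma(x,t,\xi,\tau)$ with a cutoff $h$ supported in $B_{R+1}$, obtaining a representation of $w(x,t)$ as an integral over the annulus $B_{R+1}\setminus B_R$; the Gaussian decay of $\Gamma$ and its derivatives, together with the integral hypothesis \eqref{integralestgr}, then forces this to vanish as $R\to\infty$. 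This Green-identity argument is what actually handles the unbounded growth class, and it cannot be replaced by the comparison principle of Appendix \ref{Comparison}.
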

As mentioned above, the existence of the fundamental solution (and more precisely properties \emph{\ref{item1}.-\ref{item4}.}) follows from the  \emph{parametrix method} introduced by Levi in \cite{levi1907sulle} and \cite{levi1909problemi}, and here adapted to our subelliptic evolutive setting. Let us briefly outline the parametrix technique: for every $\z \in \rnn$,  we evaluate the coefficients of the principal part matrix $A$ at the point $\z$, and we denote by $\G_{A_\z}=\G_{A_\z}(\cdot,\zeta)$ the fundamental solution of the following \textit{frozen} operator
\begin{equation}\label{HGfreeze}
	\h_{A_\z}=\sum_{i,j=1}^{m_1} a_{ij}(\z)X_iX_j-\p_t.
\end{equation}
We call parametrix the function $Z(z,\z):=\G_{A_\z}(z,\z)$. We look for a fundamental solution $\G$ as a solution to the following equation
\begin{equation}\label{G=Z+J}
	\G(z,\z)=Z(z,\z)+J(z,\z),
\end{equation}
where $J$ is a \emph{correction term} that we assume of the following form
\begin{equation}\label{J}
	J(z,\z)=\int_\t^t\int_{\rn} Z(x,t,y,s)G(y,s,\x,\t)dyds,
\end{equation}
for some unknown function $G$. In particular, we construct a function $G$ such that $\G$ is a classical solution to
\begin{equation}\label{Gsol}
	\H\G(z,\z)=\H Z(z,\z)+\H J(z,\z)=0,\qquad \text{for every } z\neq \z.
\end{equation}
Moreover, assuming that we can differentiate $J$ under the integral sign, we get
\begin{equation}\label{LJ}
	\H J(z,\z)=\int_\t^t\int_{\rn} \H Z(x,t,y,s)G(y,s,\x,\t)dyds-G(z,\z).
\end{equation}
Combining \eqref{Gsol} and \eqref{LJ} we get the following equation for $G$
\begin{equation}\label{G}
	G(z,\z)= \H Z(z,\z) + \int_\t^t\int_{\rn} \H Z(x,t,y,s)G(y,s,\x,\t)dyds,
\end{equation}
which is a Fredholm integral equation of the second kind. We solve \eqref{G} by means of successive approximation method, obtaining
\begin{equation}\label{g}
	G(z,\z)=\sum_{k=1}^\infty \big(\H Z\big)_k(z,\z),
\end{equation}
where $\big(\H Z\big)_1:=\H Z$, and 
\begin{equation}\label{hgk}
	\big(\H Z\big)_{k+1}(z,\z):=\int_\t^t\int_{\rn} \big(\H Z\big)_1(x,t,y,s)\big(\H Z\big)_k(y,s,\x,\t)dyds.
\end{equation}
To prove the well-posedness of the parametrix method, we must verify that the integral \eqref{hgk} is properly defined, that the series \eqref{g} converges, and that the identity \eqref{LJ} is satisfied, thereby ensuring the validity of \eqref{Gsol}. As it can be seen from \eqref{Gaussian-estimates-2} and \eqref{hgk}, the H\"older continuity assumption is crucial to address the existence of some singular integrals.

Once the existence of the fundamental solution is established, the proof of properties \emph{\ref{item5}.-\ref{item10}.} follows from classical PDEs techniques. We emphasize that the proof of the lower bound \eqref{Gaussian-estimates-lower} strongly depends on the parametrix  construction of $\G$. Indeed, this method enables us to establish a \emph{local} lower bound (see \eqref{Gaussian-estimates-lower-local} below), that, by means of the technique developed in \cite{jerison-calle}, we are able to turn into a \emph{global} one.

We finally remark that the proof of properties \emph{\ref{item1}.-\ref{item8}.} still goes through if we consider the non-divergence form operator
\begin{equation}\label{non-div}
	\h u = \sum_{i,j=1}^{m_1} \bar a_{ij}X_iX_ju+\sum_{i=1}^{m_1}\bar b_{i}X_iu+\bar c\,u,
\end{equation}
whenever $\bar a$, $\bar b$ and $\bar c$ are continuous and satisfy \eqref{non-divergence-form-condition}.

\medskip\newenvironment{pf-fundamental}{\noindent { \textit{Proof of Theorem} \ref{Th-Fundamental}.}}{\hfill $\square$}
\begin{pf-fundamental}
	We recall for reader's convenience that we aim to prove the existence of the fundamental solution to operators \eqref{HG1} and \eqref{HG1star}, \emph{i.e.}
	\begin{equation}\label{HG2}
		\begin{aligned}
			&\H u =\sum_{i,j=1}^{m_1} a_{ij}X_iX_ju +\sum_{i=1}^{m_1}\left( \sum_{j=1}^{m_1}X_ja_{ij} + b_i\right)X_iu+ cu-\p_t u,\\
			&\HH u =\sum_{i,j=1}^{m_1} a_{ij}X_iX_ju + \sum_{i=1}^{m_1}\left( \sum_{j=1}^{m_1}X_ja_{ij} - b_i\right)X_iu+ \left(c-\sum_{i=1}^{m_1}X_ib_i\right)u+\p_t u. 
		\end{aligned}
	\end{equation}

	\medskip\emph{\ref{item1}.-\ref{item5}.}

	Since the proof of \emph{\ref{item1}.-\ref{item5}.} follows very closely the one presented in \cite{bonfiglioli2004fundamental}, here we only give a sketch of it and we refer the interested reader to \cite{bonfiglioli2004fundamental} for details. In the following we focus our attention on to the fundamental solution $\G$ of $\H$. Clearly, the fundamental solution $\G^*$ of the adjoint equation $\HH$ satisfies the dual statement.

	The starting point is to show the existence of a smooth fundamental solution to \eqref{HGfreeze}, hence the existence of the parametrix $Z$. This is achieved thanks to Theorem 2.5 in \cite{bonfiglioli2002uniform}. To be more precise, Theorem 2.5 in \cite{bonfiglioli2002uniform} states that the fundamental solution of constant coefficient operators (hence the parametrix) exists and satisfies properties \emph{\ref{item2}.-\ref{item10}.} with upper and lower Gaussian bounds
	\begin{equation}\label{gaussian-parametrix}
		\frac{1}{c_\l\big(t-\t\big)^{\frac{Q}{2}}}\exp{\left(-c_\l\frac{d_X(x,\x)^2}{(t-\t)}\right)}\leq \G_{A_\z}(z,\z) \leq  \frac{c_\l}{\big(t-\t\big)^{\frac{Q}{2}}}\exp{\left(-\frac{d_X(x,\x)^2}{c_\l(t-\t)}\right)},
	\end{equation}
	that hold for every $z,\z \in \rnn$. In addition, similar Gaussian bounds hold also for its derivatives. Crucially, $c_\l$ depends only on the ellipticity constant $\lambda$ given by \textbf{(H4)}. In particular, the Gaussian estimates for the parametrix are \emph{uniform}, as they do not depend on the point $\z\in\rnn$ where we compute \eqref{HGfreeze}.

	The core principle underlying Levi's method is the following observation: since $Z(z,\z)$ is the fundamental solution associated to \eqref{HGfreeze}, it holds
	\begin{equation}\label{H-Levi}
		\H Z(z,\z)\!=\!\!\!\sum_{i,j=1}^{m_1}\!\!\big(a_{ij}(z)-a_{ij}(\z)\big)X_iX_jZ(z,\z)+\sum_{i=1}^{m_1}\!\left(\sum_{j=1}^{m_1}X_ja_{ij}(z) + b_i(z)\right)\!\!X_iZ(z,\z)+c(z)Z(z,\z).
	\end{equation}
	Thanks to the H\"older continuity assumption, the Gaussian upper bounds ensure the integrability required to make the integral in \eqref{hgk} well-defined, and to guarantee that the series in \eqref{g} converges and provides a solution to \eqref{G} (see Proposition 2.1 and Corollary 2.2 in \cite{bonfiglioli2004fundamental}). Moreover, the following estimate 
	\begin{equation}\label{G-estimate-1}
		|G(z,\z)|\leq \frac{c_T (t-\tau)^{\frac{\a}{2}-1}}{(t-\t)^{\frac{Q}{2}}}\exp{\left(-\frac{d_X(x,\x)^2}{c(t-\t)}\right)},\\
	\end{equation}
	holds true for some $c>0$ depending only on structural assumptions, while $c_T$ depends also on $T$. By \eqref{gaussian-parametrix} and \eqref{G-estimate-1}, together with the Lagrange mean value theorem on integral paths, we can prove that $\G$ is 'almost' a fundamental solution (see Theorem 2.7 in \cite{bonfiglioli2004fundamental} for further details). More precisely, for every $\z\in \rnn$  the function $\G(\cdot,\z)$  belongs to the weaker regularity space $C_{x,t}^2$, which is the space of continuous functions $w$ such that $w(\cdot,t)$ has continuous Lie derivatives $X_iX_j$ for every fixed $t$, and $w(x, \cdot)$ has continuous time derivative $\p_t$ for every fixed $x$. Moreover, $\H\G(z,\z)=0$ in $\rnn \setminus \{\z\}$, and if $f$ and $g$ satisfy the properties of statement \emph{\ref{item4}.}, it follows that the function $u$ defined in \eqref{u-conv} belongs to $C_{x,t}^2$ and solves the Cauchy problem \eqref{cauchy} (see Theorem 3.1 in \cite{bonfiglioli2004fundamental}). 
	By \eqref{G-estimate-1} and the Gaussian estimates for the parametrix, we also infer that $\G$ satisfies the Gaussian upper bounds \eqref{Gaussian-estimates}, \eqref{Gaussian-estimates-1} and \eqref{Gaussian-estimates-2}.
	
	Hence, to complete the proof of \emph{\ref{item1}.-\ref{item4}.} we need to recover the $C^2_\GG$ regularity for $\G$ (since it implies the same regularity for the function $u$ defined in \eqref{u-conv}). This regularity result follows from a combination of the comparison principle stated in  Proposition \ref{Prop-comparison} (see Appendix \eqref{Comparison} below), together with a regularization argument and the Schauder estimates proved in \cite{bramantibrandolinischauder} (see Corollary 4.8, Corollary 4.9 and the proof of Theorem 1.2 in \cite{bonfiglioli2004fundamental} for more details). In addition, the comparison principle implies also \emph{\ref{item5}.} (see for instance Proposition 4.4 in \cite{bonfiglioli2004fundamental}).

	\medskip\emph{\ref{item6}.}

	We prove \eqref{normalization} when $c=0$, the general case follows by a standard change of variable.

	Firstly, we show that for every $\phi\in C_c^\infty\big(\rnn\big)$ such that $\sup_{(y,s),(\x,\t)\in \mathrm{supp}(\phi)}\ |s-\t|<\e$, and $0\leq \phi \leq 1$, it holds
	\begin{equation}\label{w-la-coop}
		u(x,t):=\int_{\rnn}\G(x,t,\x,\t)\phi(\x,\t)d\x d\t \leq \e,\qquad \forall (x,t)\in\rnn.
	\end{equation}  
	Suppose, without loss of generality, that 
	\begin{equation}\label{gnocchi-di-patate-spesotti}
		0=\min \Big\{s\in\R : (y,s)\in \mathrm{supp}(\phi)\, \text{ for some } y \in\rn\Big\}.
	\end{equation}	
	Since $\phi$ is smooth with compact support, by \emph{\ref{item4}.} we infer that $\H u(x,t) = -\phi(x,t)$, by \emph{\ref{item3}.}  and \eqref{gnocchi-di-patate-spesotti} we deduce that $u(x,t)=0$ for every $t\leq 0$, and by \eqref{Gaussian-estimates} we see that $u$ vanishes as $|x|\to +\infty$, thanks to the continuity of the Euclidean metric w.r.t. the CC-metric. Thus, for every $(x,t)\in\rnn$ we set $v(x,t):=t-u(x,t)$, and we deduce that it solves 
	\begin{equation}
		\begin{cases}
			\H v(x,t) = -1+\phi(x,t)\leq 0,&\qquad \mathrm{in}\;\rn\times(0,\e), \\
			v(x,0) = 0,&\qquad \mathrm{in}\;\rn,\\
			\displaystyle\lim_{|x|\to+\infty}v(x,t) = t\geq 0,&\qquad \mathrm{in}\;(0,\e). 
		\end{cases}
	\end{equation}
	By comparison principle Proposition \ref{Prop-comparison}, we infer that $v(x,t)\geq 0$, thus implying $u(x,t)\leq t$ and therefore the validity of \eqref{w-la-coop}, since $t<\e$. If $t\geq\e$, by \eqref{gnocchi-di-patate-spesotti} and the assumption on the support of $\phi$ we infer $u(x,t)=0$, and then \eqref{w-la-coop} follows.

	Next, we want to show that \eqref{w-la-coop} implies
	\begin{equation}\label{less-then-normal}
		\int_{\rn}\G(x,t,\x,\t)d\x \leq 1,
	\end{equation}
	for every $x\in \rn$ and $t>\t$. Indeed, for every $\e>0$, let $(\phi_n)_{n\in\N}\in C_c^\infty\big(\rn\times(\t,\t+\e)\big)$ be a sequence of function with $0\leq \phi_n\leq 1$, pointwise converging to the characteristic function of $\rn\times(\t,\t+\e)$. By \eqref{w-la-coop}, as $n$ goes to infinity we get
	\begin{equation}
		\frac{1}{\e}\int_\t^{\t+\e}\int_{\rn}\G(x,t,\x,\t)d\x \leq 1,
	\end{equation}
	implying \eqref{less-then-normal} by Lebesgue differentiation theorem.

	Finally, we conclude the proof of \emph{\ref{item6}.} by showing that \eqref{less-then-normal} is indeed an equality. For every $T>0$ we set
	\begin{equation}
		w(x,t):=t-\int_0^{t}\int_{\rn}\G(x,t,\x,s)d\x ds,\qquad (x,t)\in \rn\times(0,T).
	\end{equation}
	By the same arguments previously employed, we see that $w$ is a nonnegative function satisfying $\H w=0$ in $ \rn\times(0,T)$, and $w(x,0)=0$ for every $x\in\rn$. From the comparison principle we deduce that $w=0$, that implies
	\begin{equation}
		\int_0^t \left(1-\int_{\rn}\G(x,t,\x,s)d\x ds\right)=0.
	\end{equation} 
	Finally, this relation, combined with \eqref{less-then-normal}, yields the desired conclusion.

	\medskip\emph{\ref{item7}.}

	Let us fix $\x\in\rn$ and $s,\t\in\R$ with $s>\t$. For every  $x\in\rn$ and $t>s$ we set
	\begin{equation}
		w(x,t):=\int_{\rn}\G(x,t,y,s)\G(y,s,\x,\t)dy,\qquad v(x,t):=\G(x,t,\x,\t).
	\end{equation}
	The reproduction property follows once we prove that $w(x,t)=v(x,t)$ for every $x \in \R^N$ and $t>s$. Since $v$ is a fundamental solution, for every $T>s$ it solves $\H v= 0$ in $\rn\times (s,T)$, moreover $v(x,t)\to \G(x,s,\x,\t)$ as $t\to s$. Let us now turn our attention to $w$. Since $\G\big(\cdot,\x,\t\big)\in C^2_\GG\big(\rnn\setminus\{\x,\t\}\big)$, it is bounded in $\rn\times (s,T)$ (we recall that $\t<s$). Hence, by \emph{\ref{item4}.} we infer that for every $T>s$ the function $w$ solves $\H w= 0$ in $\rn\times (s,T)$, and that $w(x,t)\to \G(x,s,\x,\t)$ as $t\to s$. The thesis follows then by comparison principle, once we prove that 
	\begin{equation}\label{vanishing}
		\sup_{t\in(s,T),\,|x|<R}|v(x,t)-w(x,t)|\underset{R\to \infty}\longrightarrow 0. 
	\end{equation}
	From the Gaussian estimates \eqref{Gaussian-estimates} we easily infer that
	\begin{equation}\label{aranciata}
		\begin{aligned}
			&|v(x,t)-w(x,t)|\\
			&\quad \quad\leq C_T^2\left| \frac{\exp{\left(-c_u\frac{d_X(x,\x)^2}{(t-\t)}\right)}}{\big(t-\t\big)^{\frac{Q}{2}}} + \bigintsss_{\rn}\frac{\exp{\left(-c_u\frac{d_X(x,y)^2}{(t-s)}\right)}}{\big(t-s\big)^{\frac{Q}{2}}} \frac{\exp{\left(-c_u\frac{d_X(y,\x)^2}{(s-\t)}\right)}}{\big(s-\t\big)^{\frac{Q}{2}}} dy \right|.
		\end{aligned}
	\end{equation}
	Let $\G_0$ denote the fundamental solution of $\Delta_\GG-\p_t$. From the Gaussian estimates \eqref{gaussian-parametrix} and the reproduction property of $\Gamma_0$, we infer that there exist four constants $c_1,c_2,c_3,c_4>0$ such that
	\begin{equation}\label{cioccolata-al-latte}
		\begin{aligned}
			&\bigintsss_{\rn}\frac{\exp{\left(-c_u\frac{d_X(x,y)^2}{(t-s)}\right)}}{\big(t-s\big)^{\frac{Q}{2}}}\frac{\exp{\left(-c_u\frac{d_X(y,\x)^2}{(s-\t)}\right)}}{(s-\t)^{\frac{Q}{2}}}dy\\&\leq c_2\bigintsss_{\rn}\G_0(x,c_1 t,y, c_1 s)\G_0(y, c_1 s,\x,c_1 \t)dy\leq \frac{c_4}{(t-\t)^{\frac{Q}{2}}}\exp{\left(-\frac{d_X(x,\x)^2}{c_3(t-\t)}\right)},
		\end{aligned}
	\end{equation} 	
	for every $(x,t),(\x,\t)\in\rnn$. Thus, we can bound \eqref{aranciata} by $C \exp\left(-c\,d_X(x,\x)^2\right)$,	for some $c,C>0$ that depend on $t$ and $\t$. Finally, \eqref{vanishing} follows from the continuity of the Euclidean metric w.r.t. the CC-metric, concluding the proof.

	\medskip\emph{\ref{item8}.}

	We begin by proving a \emph{local} Gaussian lower bound, which follows from the construction \eqref{G=Z+J} provided by the parametrix method. We claim that there exist $C_1,C_2>0$ and $0<\tilde{T}<1$ (depending only on structural assumptions) such that
	\begin{equation}\label{Gaussian-estimates-lower-local}
		\G(x,t,\x,\t) \geq	 \frac{1}{2 c_\l\big(t-\t\big)^{\frac{Q}{2}}}\exp{\left(- c_\l\frac{d_X(x,\x)^2}{(t-\t)}\right)},
	\end{equation}
	for every $(x,t),(\x,\t)\in \rnn$ such that $0<t-\t\leq \tilde T$ and $\frac{d_X(x,\x)^2}{t-\t} \leq C_1 \left|\ln\Big(C_2(t-\t)\Big)\right|$ (where $c_\l$ is the constant appearing in \eqref{gaussian-parametrix}).

	In order to prove claim \eqref{Gaussian-estimates-lower-local}, we first estimate the function $J$ introduced in \eqref{J}. By  \eqref{gaussian-parametrix}, \eqref{G-estimate-1}, and \eqref{cioccolata-al-latte} we infer that for every $z=(x,t),\z=(\x,\t)\in \rnn$ such that $0<t-\t\leq T$, the following relation holds 
	\begin{equation}\label{J-bound}
		\begin{aligned}
			|J(z,\z)|&\leq \int_\t^t\int_{\rn} |Z(x,t,y,s)||G(y,s,\x,\t)|dyds\\
			&\leq \bar C_T \bigintsss_\t^t (s-\tau)^{\frac{\a}{2}-1}\bigintsss_{\rn}\frac{\exp{\left(-\frac{d_X(x,y)^2}{c_\l(t-s)}\right)}}{\big(t-s\big)^{\frac{Q}{2}}}\frac{\exp{\left(-\frac{d_X(y,\x)^2}{c(s-\t)}\right)}}{(s-\t)^{\frac{Q}{2}}}dyds\\
			&\leq  \frac{C_T}{(t-\t)^{\frac{Q}{2}}}\exp{\left(-\frac{d_X(x,\x)^2}{\tilde c(t-\t)}\right)}\int_\t^t (s-\tau)^{\frac{\a}{2}-1}ds\\
			&=\frac{2 C_T}{\a} \frac{(t-\t)^\frac{\a}{2}}{(t-\t)^{\frac{Q}{2}}}\exp{\left(-\frac{d_X(x,\x)^2}{\tilde c(t-\t)}\right)},
		\end{aligned}
	\end{equation}
	for some $\bar C_T,C_T> 0$ and $\tilde c\geq c_\l$, that depend only on structural assumptions and $T$. Hence, the local lower bound \eqref{Gaussian-estimates-lower-local} follows from \eqref{G=Z+J}, \eqref{gaussian-parametrix} and \eqref{J-bound}, with $C_1$ only depending on the constant $\tilde{c}$ and $C_2$ depending on $C_T$ and $c_\lambda$.

	The global lower bound \eqref{Gaussian-estimates-lower} follows from the positivity of $\G$ given by \emph{\ref{item5}.}, combined with the reproduction property \eqref{reproduction}. Indeed, let $C_1,C_2$ and $\tilde{T}$ be the constants given by the local lower bound, and set
	\begin{equation}\label{chinotto-san-pellegrino-zero-zuccheri-aggiunti}
		A:=C_1 \left|\ln\Big(C_2 \tilde T\Big)\right|.
	\end{equation}
	We suppose without loss of generality that $\tilde{T}$ is such that
	\begin{equation}
		\sqrt{A}> \left(\frac{c_\l}{2\omega_Q}\right)^\frac{1}{Q} \Big(k_1+k_1^2\Big),
	\end{equation}	
	where $\omega_Q$ is the measure of the $d_X$-unit ball, $Q$ is the homogeneous dimension of $\GG$, $k_1$ is defined in \eqref{k1}, and $c_\l$ is the constant appearing in \eqref{Gaussian-estimates-lower-local}. Let us fix $x,\x \in \rn$, $0<t-\t<T$, and let $k$ be the smallest integer greater then
	\begin{equation}\label{def-k}
		\max\left(\frac{T}{\tilde T}, c_1 \frac{d_X(x,\x)^2}{T (t-\t)}\right),
	\end{equation}
	with $c_1>0$ to be fixed later. By the segment property and the equivalence of homogeneous distance (see for instance Corollary 5.15.6 and Proposition 5.1.4 	 in \cite{bonfiglioli2007stratified}), there exist $x=x_0	, x_1,\ldots,x_k,x_{k+1}=\x$ and $k_3>0$ (depending only on the group $\GG$) such that 
	\begin{equation}\label{segment}
		d_X(x_j,x_{j+1})\leq k_3 \frac{d_X(x,\x)}{k+1},\qquad \text{for every } j=0,\ldots,k.
	\end{equation}
	We fix
	\begin{equation}\label{VirginiaO}
		r:=\left(\frac{2c_\l}{\omega_Q}\right)^\frac{1}{Q}\sqrt{\frac{(t-\t)}{k+1}},
	\end{equation}
	and pick $t=s_0,s_1,\ldots,s_k,s_{k+1}=\t$ such that 
	\begin{equation}\label{fazio}
		s_j-s_{j+1}= \frac{t-\t}{k+1},\qquad \text{for every } j=0,\ldots,k.
	\end{equation}
	Iterating the reproduction property \eqref{reproduction} $k$ times, by the positivity of $\G$ we get
	\begin{equation}\label{grissini-conad}
		\begin{aligned}
			\G(x,t,\x,\t)&= \int_{\rn}\G(x,t,y_1,s_1)\G(y_1,s_1,\x,\t)dy_1 \\
			&= \bigintsss_{(\rn)^k}\prod_{j=0}^{k}\G(y_j,s_j,y_{j+1}s_{j+1})dy_1\ldots dy_k\\
			&\geq \bigintsss_{\prod_{j=1}^k B_r(x_j)}\prod_{j=0}^{k}\G(y_j,s_j,y_{j+1},s_{j+1})dy_1\ldots dy_k =: I,
		\end{aligned}
	\end{equation}
	where we have set in the last two integrals $y_0=x$ and $y_{j+1}=\x$, (we recall that $B_r(x_j)$ denote the $d_X$-ball of radius $r$ and center $x_j$). We set
	\begin{equation}\label{def-c1}
		c_1:=\left(\frac{k^2_1k_3\sqrt{T}}{\sqrt{A}-\left(\frac{2c_\l}{\omega_Q}\right)^\frac{1}{Q}\Big(k_1+k_1^2\Big)}\right)^2,
	\end{equation}	
	and we claim that
	\begin{equation}
		\frac{d_X(y_j,y_{j+1})^2}{s_j-s_{j+1}} \leq C_1 \left|\ln\Big(C_2(s_j-s_{j+1})\Big)\right|,
	\end{equation}
	for every $y_j\in B_r(x_j)$ and for all $j=1,\ldots,k$. This, in particular, allows us to employ the lower bound \eqref{Gaussian-estimates-lower}. By \eqref{k1},\eqref{VirginiaO}, \eqref{segment}, \eqref{def-k}, and \eqref{fazio} we get 
	\begin{equation}
		d_X(y_j,y_{j+1})\leq r(k_1+k_1^2)+k_1^2k_3 \frac{d_X(x,\x)}{k+1} \leq r(k_1+k_1^2)+\frac{k_1^2k_3}{k+1}\sqrt{\frac{T\,k}{c_1}(t-\t)}\leq \sqrt{A} \sqrt{s_j-s_{j+1}}.
	\end{equation}
	The claim is therefore a consequence of \eqref{chinotto-san-pellegrino-zero-zuccheri-aggiunti}. Hence, we are in position to apply \eqref{Gaussian-estimates-lower-local}, from which it follows that
	\begin{equation}\label{grissini-coop}
		\begin{aligned}
			I&\geq \Bigg(\frac{1}{2c_\l}\Bigg)^k \bigintsss_{\prod_{j=1}^k B_r(x_j)}\prod_{j=0}^{k}\;\frac{\exp{\left(-c_\l\frac{d_X(y_j,y_{j+1})^2}{(s_j-s_{j+1})}\right)}}{\big(s_j-s_{j+1})^{\frac{Q}{2}}}dy_1\ldots dy_k\\
			&\geq \Bigg(\frac{1}{2c_\l}\Bigg)^k \left(\prod_{j=0}^{k} \frac{\exp\big({-c_\l A}\big)}{\big(s_j-s_{j+1})^{\frac{Q}{2}}}\right)\left(\prod_{j=1}^k\omega_Q\left(\frac{2c_\l}{\omega_Q}\right) (s_j-s_{j+1})^\frac{Q}{2}\right)\\
			&\geq\frac{\exp\big({-kc_\l A}\big)}{(t-\t)^\frac{Q}{2}}.
		\end{aligned}
	\end{equation}
	If $c_1d_X(x,\x)^2\geq (t-\t)T$, then $k = c_1d_X(x,\x)^2/ (t-\t)+\y_1$ for some $\y_1\in [0,1)$. This, combined with \eqref{grissini-conad} and \eqref{grissini-coop}, implies
	\begin{equation}
		\G(x,t,\x,\t)\geq \frac{\exp\big({-c_\l A\big)}}{(t-\t)^\frac{Q}{2}}\exp{\left(-c_\l Ac_1\frac{d_X(x,\x)^2}{(t-\t)}\right)}.
	\end{equation}
	Otherwise, if $c_1d_X(x,\x)^2< (t-\t)T$, then $k =\frac{T}{\tilde T}+\y_2$ for some $\y_2\in [0,1)$. Hence, again by \eqref{grissini-conad} and \eqref{grissini-coop}, we deduce that
	\begin{equation}
		\G(x,t,\x,\t)\geq \exp\big({-c_\l A}\big)\,\frac{\exp{\left(-c_\l A\frac{T}{\tilde T}\right)}}{(t-\t)^\frac{Q}{2}}\geq \frac{\exp{\left(-c_\l A \left(\frac{T}{\tilde T}+1\right)\right)}}{(t-\t)^\frac{Q}{2}}\exp{\left(-c_\l Ac_1\frac{d_X(x,\x)^2}{(t-\t)}\right)},
	\end{equation}
	and this concludes the proof.

	\medskip\emph{\ref{item10}.}
	
	The proof of the existence and the properties of $\G^*$ is analogous to that of $\G$ and therefore we are left with proving \eqref{index}. We first observe that by \emph{\ref{item1}.-\ref{item4}.} the function
	\begin{equation}
		u(x,t):=\int_{\rn}\G^*(x,t,\xi,T_2)g(\xi)d\xi - \int_{t}^{T_2}\int_{\rn}\G^*(x,t,\xi,\tau)f(\xi,\tau)d\xi d\tau,
	\end{equation}
	is a classical solution to
	\begin{equation}
		\begin{cases}
			\HH u = f,\qquad &\mathrm{in}\;\rn\times(T,T_2),\\      
			u(x,T_2)=g(x),\qquad &\mathrm{in}\;\rn,
		\end{cases}
	\end{equation}
	as it solves the equation, and the final condition is taken in the following limit sense
	\begin{equation}\label{datofinale}
		\lim\limits_{t\rightarrow T_2}\left(\int_{\rn} \G^*(x,t,\xi,T_2)g(\xi)d\xi - \int_{t}^{T_2}\int_{\rn}\G^*(x,t,\xi,\tau)f(\xi,\tau)d\xi d\tau\right)=g(x).
	\end{equation}
	We now notice that the \emph{Green's identity} holds 
	\begin{equation}\label{green}
		v\H u-u\HH v=\div_\GG\left[\sum_{j=1}^{m_1}\Big(va_{ij}X_ju-ua_{ij}X_jv-uvX_ja_{ij}\Big)+b_i(uv)\right]-\p_t(uv),
	\end{equation}
	for every $u,v \in C_\GG^2$. Let $d_{\GG}$ denote an homogeneous distance on $\GG$ smooth out of the origin, and let $B_r(x)$ be the corresponding $d_\GG$-ball of radius $r>0$ centered at $x\in\rn$. We fix $R>0$, $(x,t),(\x,\t)\in \rnn$, $\e<\t-t$, and we integrate on $\O_T:=B_R(0)\times (\t-\e,t+\e)$  the Green's identity with $u(\cdot)=\G(\cdot,x,t)$ and $v(\cdot)=\G^*(\cdot,\xi,\t)$. Since $\H u =\HH v=0$ in $\O_T$, by applying the divergence theorem on $\GG$ (see \cite{pallara2022mean}, pag.13) we get
	\begin{equation}
		\begin{aligned}
			&\int_{B_R(0)}\G(y,t+\e,x,t)\G^*(y,t+\e,\x,\t)dy-\int_{B_R(0 )}\G(y,\t-\e,x,t)\G^*(y,\t-\e,\x,\t)dy\\	
			&\qquad\qquad=\int_{\t-\e}^{t+\e}\int_{\p B_R(0)}\sum_{j=1}^{m_1}\Big\langle\big(\G^*a_{ij}X_j\G-\G a_{ij}X_j\G^*-\G\G^*X_ja_{ij}\big)+b_i\G\G^*,\tilde{\n} \Big\rangle d \mathcal{H}^{n-1},
		\end{aligned}
	\end{equation}
	where $\tilde{\n}_i=\sum_{j=1}^N \n_j\phi_j^i$(here, $\n$ denotes the Euclidean normal outward vector of $\p B_R(0)$). Since the components $\phi_j^i$ of the vector fields $X_i$ are polynomial, by \eqref{Gaussian-estimates}, and \eqref{Gaussian-estimates-1}  and the equivalence of $d_\GG$ and $d_X$, the integral in the right hand-side vanishes as $R\rightarrow +\infty$. Hence, we conclude that
	\begin{equation}\label{equality}
		\int_{\rn}\G(y,t+\e,x,t)\G^*(y,t+\e,\x,\t)dy=\int_{\rn}\G(y,\t-\e,x,t)\G^*(y,\t-\e,\x,\t)dy.
	\end{equation}
	The proof follows by letting $\e \rightarrow 0$. Indeed, if we set $g_\e(y):=\G^*(y,t+\e,\x,\t)$, then we rewrite the left-hand side of the previous equality as follows
	\begin{equation}\label{dato-iniz-fin}
		\begin{aligned}
			&\int_{\rn}\G(y,t+\e,x,t) g_\e(y)dy\\
			&\quad=\int_{\rn}\G(y,t+\e,x,t) \G^*(y,t,\x,\t)dy + \int_{\rn}\G(y,t+\e,x,t) \Big(g_\e(y)-\G^*(y,t,\x,\t)\Big)dy\\
			&\quad=:I_1+I_2. 
		\end{aligned}
	\end{equation}
	By the regularity of $\G$ outside the pole and \eqref{Gaussian-estimates}, we infer that $I_2$ vanishes as $\e\to 0$. Moreover, by \eqref{datoiniziale} we deduce that $I_1$ converges to $\G^*(x,t,\x,\t)$ as $\e\to 0$. The same arguments apply to the right-hand side of \eqref{equality} (by using \eqref{datofinale} instead of \eqref{datoiniziale}). Hence, the proof is complete.

	\medskip\emph{\ref{item9}.}

	We begin with estimate \eqref{egrowth}. Let $k_1$ as in \eqref{k1}: for every $H_1,H_2>0$ such that 
	\begin{equation}\label{h1h2k1k2}
		\sqrt{\frac{H_1+H_2}{H_1}}>k_1,
	\end{equation}
	we claim that there exists $M>0$ such that
	\begin{equation}\label{inequality-distance}
		H_1\,d_X(x,y)^2-(H_1+H_2)\,d_X(y,0)^2\leq M\,d_X(x,0)^2,\qquad \text{for every}\; x,y\in \rn. 
	\end{equation}
	The claim is clearly true for $y=0$, taking $M\geq H_1$. Then, if $y\neq 0$, we set $r:=d_X(y,0)$ and
	\begin{equation}
		\xi:=\d_{\frac{1}{r}}(x),\qquad w:=\d_{\frac{1}{r}}(y).
	\end{equation}
	Since $d_X$ is $\d_r$-homogeneous of degree one, \eqref{inequality-distance} becomes
	\begin{equation}\label{inequaliy-distance-2}
		H_1\,d_X(\x,w)^2-(H_1+H_2)\leq M\,d_X(\x,0)^2.
	\end{equation}
	If $d_X(\x,w)^2\leq \frac{(H_1+H_2)}{H_1}$, \eqref{inequaliy-distance-2} holds for every $M> 0$. Otherwise, by \eqref{k1} we get
	\begin{equation}
		d_X(\x,0)> \frac{1}{k_1}\sqrt{\frac{H_1+H_2}{H_1}}-1:=\vartheta.
	\end{equation}
	We remark that $\vartheta$ is strictly positive, thanks to \eqref{h1h2k1k2}. Then, by \eqref{k1} inequality \eqref{inequality-distance} follows if
	\begin{equation}
		d_X(\x,0)^2\big(H_1k_1^2-M\big) + 2H_1k_1^2d_X(\x,0) + \big(H_1k_1^2-H_1-H_2\big)<0.
	\end{equation}
	This inequality holds true if we pick $M> 2H_1k_1^2/\vartheta$.

	Let us now take  $H_1=h_1$ and $H_2=\frac{c_u}{4T}-\frac{H_1}{2}$ (where $c_u$ is given by the Gaussian upper bound \eqref{Gaussian-estimates}), and recall that $h_1$ satisfies \eqref{h1}. By \eqref{Gaussian-estimates} and \eqref{h1} we get 
	\begin{equation}\label{gamma-aumentata}
		\begin{aligned}
			|\G(x,t,\x,\t)|&\leq \frac{c_T (t-\tau)^{\frac{\a}{2}}}{(t-\t)^{\frac{Q}{2}}}\exp{\left(-c_u\frac{d_X(x,\x)^2}{(t-\t)}\right)}\\
			&\leq c_T\frac{\exp{\left(-c\frac{d_X(x,\x)^2}{(t-\t)}\right)}}{(t-\t)^{\frac{Q}{2}}}\exp{\left(-(H_1+H_2)d_X(x,\x)^2\right)},
		\end{aligned}
	\end{equation} 
	with $c = \big(c_u-T(H_1+H_2)\big)/2$. From the representation formula \eqref{u-conv}, together with \eqref{f-growth-exp}, \eqref{g-growth-exp}, and  \eqref{gamma-aumentata} we get
	\begin{equation}
		\begin{aligned}
			|u(x,t)|&\leq\int_{\rn}|\G(x,t,\xi,T_1)g(\xi)|d\xi + \int_{T_1}^t\int_{\rn}|\G((x,t,\xi,\tau)f(\xi,\tau)|d\xi d\tau\\
			&\leq h_2c_T \bigintsss_{\rn}\frac{\exp{\left(-c\frac{d_X(x,\x)^2}{(t-T_1)}\right)}}{(t-T_1)^{\frac{Q}{2}}}\exp{\left(-(H_1+H_2)d_X(x,\x)^2+H_1d_X(\x,0)^2\right)}d\xi \\
			&+h_2c_T\bigintsss_{T_1}^t\bigintsss_{\rn}\frac{\exp{\left(-c\frac{d_X(x,\x)^2}{(t-\t)}\right)}}{(t-\t)^{\frac{Q}{2}}}\exp{\left(-(H_1+H_2)d_X(x,\x)^2+H_1d_X(\x,0)^2\right)}d\xi d\t:=I_1.
		\end{aligned}
	\end{equation}
	By setting $\x^{-1}\circ x = y$, from \eqref{inequality-distance} we get
	\begin{equation}
		\begin{aligned}
		I_1&=h_2c_T\bigintsss_{\rn}\frac{\exp{\left(-c\frac{d_X(y,0)^2}{(t-T_1)}\right)}}{(t-T_1)^{\frac{Q}{2}}}\exp{\left(-(H_1+H_2)d_X(y,0)^2+H_1d_X(x,y)^2\right)}dy \\
		&+h_2c_T\bigintsss_{T_1}^t\bigintsss_{\rn}\frac{\exp{\left(-c\frac{d_X(y,0)^2}{(t-\t)}\right)}}{(t-\t)^{\frac{Q}{2}}}\exp{\left(-(H_1+H_2)d_X(y,0)^2+H_1d_X(x,y)^2\right)}dy d\t\\
		&\leq h_2c_T\exp\big({Md_X(x,0)^2}\big)\left(\bigintsss_{\rn}\frac{\exp{\left(-c\frac{d_X(y,0)^2}{(t-T_1)}\right)}}{(t-T_1)^{\frac{Q}{2}}}dy +\bigintsss_{T_1}^t\bigintsss_{\rn}\frac{\exp{\left(-c\frac{d_X(y,0)^2}{(t-\t)}\right)}}{(t-\t)^{\frac{Q}{2}}}dy d\t\right):=I_2.
		\end{aligned}
	\end{equation}
	Finally, employing once again the fundamental solution $\G_0$ of $\Delta_\GG-\p_t$, together with its normalization and the Gaussian upper bound \eqref{gaussian-parametrix}, we conclude
	\begin{equation}
		\begin{aligned}
			I_2&\leq h_2C_T\exp\left({Md_X(x,0)^2}\right)\Bigg(\int_{\rn}\G_0(y,c_1t,0,c_1T_1)dy \\
			&+\int_{T_1}^t\int_{\rn}\G_0(y,c_1t,0,c_1\t)dy d\t\Bigg) \leq C\exp\left({Md_X(x,0)^2}\right),
		\end{aligned}
	\end{equation}
	for some $C_T,c_1>0$ depending only on structural assumptions.

	We conclude by addressing the second part of \emph{\ref{item9}.} Let us suppose that $u,v$ are classical solutions to \eqref{cauchy} satisfying \eqref{integralestgr}. The thesis follows once we show that there exists $\bar\s\in\R$ such that, for every $T_1<\s<\bar \s$, it holds $u(x,t)=v(x,t)$ in $\rn\times(T_1,\s)$.

	We now consider the function $w:=u-v$, which, for every $T_1<\s<T_2$, is a solution to the equation
	\begin{equation}
		\begin{cases}
			\H w =0, \qquad &\mathrm{in}\;\rn\times(T_1,\s),\\      
			w(x,T_1)=0,\qquad &\mathrm{in}\;\rn,
		\end{cases}
	\end{equation}
	and by \eqref{integralestgr} satisfies
	\begin{equation}\label{intestgr}
		\int_{T_1}^{\s}\int_{\rn} |w(x,t)|\exp\left({-kd_X(x,0)^2}\right)dxdt<+\infty,
	\end{equation}
	for some $k>0$. We fix $(x,t)\in \rn\times (T_1,\s)$,  we let again $d_{\GG}$ denote an homogeneous distance smooth out of the origin, and we  denote as $B_R$ the  $d_{\GG}$-ball of radius $R\geq 1$ centered at $x$. Let $0\leq h \leq 1$  be a smooth cut-off function such that $h(\x)=0$ if $\xi \notin B_{R+1}$, and  $h(\x)=1$ if $\xi \in B_{R}$. If we pick $\e>0$ and we integrate over $B_{R+1}\times (T_1,t-\e)$ the Green's identity \eqref{green} with $u:=w$ and $v:=h(\cdot)\G(x,t,\cdot)$, we get
	\begin{equation}
		\begin{aligned}
			&\int_{T_1}^{t-\e}\int_{B_{R+1}}w(\x,\t)\HH\big(h(\x)\G(x,t,\x,\t)\big)d\x d\t=\int_{T_1}^{t-\e}\int_{B_{R+1}}\p_\t\Big(h(\x)\G(x,t,\x,\t) w(\x,\t)\Big)d\x d\t\\
			&\qquad\quad\qquad-\bigintsss_{T_1}^{t-\e}\bigintsss_{B_{R+1}}\div_\GG\left[\sum_{j=1}^m\big(h\G a_{ij}X_jw-w a_{ij}X_j(h\G)-wh\G X_ja_{ij}\big)+b_iwh\G\right]d\x d\t.
		\end{aligned}
	\end{equation} 
	Since $h=0$ on  $\p B_{R+1}$, the divergence theorem on $\GG$ implies that the last integral vanishes. Then $w(x,0)=0$ yields
	\begin{equation}
		\int_{T_1}^{t-\e}\int_{B_{R+1}}w(\x,\t)\HH\big(h(\x)\G(x,t,\x,\t)\big)d\x d\t=\int_{\rn}h(\x)\G(x,t,\x,t-\e) w(\x,t-\e) d\x. 
	\end{equation}
	As shown in \eqref{dato-iniz-fin}, the integral on the right hand-side converges to $h(x)w(x,t)=w(x,t)$ as $\e\rightarrow 0$. Since $h=0$ in $B_R$ and $\HH\G(x,t,\cdot)=0$ in $B_{R+1}\setminus B_R$ (by \eqref{index}), we get 
	\begin{equation}
		w(x,t)=\bigintsss_{T_1}^{t}\bigintsss_{B_{R+1}\setminus B_R}w\left[2\sum_{i,j=1}^{m_1}a_{ij}X_ihX_j\G+\G\sum_{i,j=1}^{m_1}a_{ij}X_iX_jh+\G\sum_{i=1}^{m_1}X_ih\right]d\xi d\t.
	\end{equation}
	Given that $h$ and its derivatives are bounded, and exploiting once again the fact that the vector fields have polynomial components, by \eqref{Gaussian-estimates} and \eqref{Gaussian-estimates-1} and the equivalence of $d_\GG$ and $d_X$, we infer that there exits $\bar \sigma \in \R$ such that, if $T_1<\sigma<\bar \sigma$, the following relation holds for some $C,\bar C_T>0$
	\begin{equation}
		\begin{aligned}
			|w(x,t)|&\leq C_T\bigintsss_{T_1}^t\bigintsss_{B_{R+1}\setminus B_R}|w(\x,\t)|\frac{\exp{\left(-C\frac{d_X(x,\x)^2}{(t-\t)}\right)}}{(t-\tau)^{\frac{Q+1}{2}}} d\x d\t\\
			&\leq \bar C_T\bigintsss_{T_1}^\s\bigintsss_{B_{R+1}\setminus B_R}|w(\x,\t)|\exp{\left(-kk_1^2d_X(x,\x)^2\right)}d\x d\t\\
			&\leq \bar C_T\,\exp{\left((1-kk_1)d_X(x,0)^2\right)}\bigintsss_{T_1}^\s\bigintsss_{B_{R+1}\setminus B_R}|w(\x,\t)|\exp\left({-kd_X(\x,0)^2}\right)d\x d\t.
		\end{aligned}
	\end{equation}
	By \eqref{intestgr}, we infer that the right hand-side vanishes as $R\rightarrow +\infty$, implying $w(x,t)=0$ for every $(x,t)\in \rn\times (T_1,\s)$. Repeating this argument finitely many times yields the desired result.
\end{pf-fundamental}

\section{Mean value formulas}\label{Mean-Value-Formula}
In this section we prove a representation formula that will be exploited in Sections \ref{Maximum} and \ref{Parabolic Harnack inequality} to prove the maximum principle and a parabolic Harnack inequality. This formula is based on the solid mean value formula recently developed in \cite{pallara2023meanvalueformulasclassical}, that we here improve employing Hadamard's descent method, in order to overcome the unboundedness of the kernel.

The solid mean value formula from Theorem 2.2 in \cite{pallara2023meanvalueformulasclassical} holds true once the following conditions are satisfied:

\medskip
\noindent
{{\bf(M1)}} \ {there exists an homogeneous Lie group $\widehat{\GG} = \left(\rnn,\widehat\circ,\widehat\delta_r\right)$ such that $X_1,\ldots,X_{m_1}$ and $X_0 = \p_t$ are left-invariant on $\widehat\GG$ and $\widehat{\d_r}$-homogeneous of degree one;}
\\\\{{\bf(M2)}} \ {there exists a fundamental solution $\G^*$ to the adjoint operator \eqref{HG1star} such that
\begin{itemize}
	\item there exists $r_0>0$ such that for all $\z=(\x,\t) \in \R^{N+1}$ the set 
	\begin{equation}\label{set-mvf}
		\Omega_r(\z):= \biggl\{ z=(x,t) \in \R^N \times (-\infty,\t): \Gamma^*(z,\z)>\frac{1}{r}\biggr\},
	\end{equation}
	is bounded for every $r \leq r_0$;
	\item for all $\z=(\x,\t)\in\rnn$ and $r,\e>0$, the set
	\begin{equation}
		\mathcal{I}_{r,\e}(\z):= \biggl\{ x \in \R^N : \Gamma^*(x,\t-\e,\z)>\frac{1}{r}\biggr\},
	\end{equation}
	satisfies the \emph{pointwise vanishing integral condition}
	\begin{equation}\label{pointwise-vanishing}
		\lim_{\varepsilon \to 0} {\mathscr H}^{N} \big( \mathcal{I}_{r, \varepsilon} (\z) \big) = 0, \qquad
		\lim_{\varepsilon \to 0} \int_{\R^N \backslash \mathcal{I}_{r, \varepsilon} (\z)} \Gamma^*(x,\t - \varepsilon,\z) d \x = 0.
	\end{equation}
\end{itemize}
Before proving our improved mean value formula, we briefly discuss the previous conditions.

We first focus our attention on assumption \textbf{(M1)}. In regularity theory, $\widehat\GG$ is not the usual group where one investigates the properties of $\H$. This is because operator $\H$ is not homogeneous of degree $2$ w.r.t. $\widehat\delta_r$, since the vector field $X_0=\p_t$ is $\widehat{\d_r}$-homogeneous of degree one (namely, $X_0$ belong to the first layer of the Lie algebra of $\widehat{\GG}$). The natural geometry when
studying operator $\H$ is determined by the parabolic  group relevant to $\GG$. More precisely, the natural framework for the regularity theory of operator $\H$ is the group $\GG_p=\Big(\rnn,\bar\circ,\bar \delta_r\Big)$, where
\begin{equation}
		z\,\bar\circ\,\z :=\left(x\circ \xi,t+\t\right),\qquad \bar\d_{r}(x,t)=\Big(\d_r(x),r^2t\Big),\qquad \forall z,\z \in \rnn, r>0,
\end{equation}
with $\circ$ and $\{\delta_r\}_{r>0}$ given by $\GG$. It is straightforward to verify that on  $\GG_p$ the operator $\H$ is left-invariant and homogeneous of degree $2$. However, the divergence theorem on $\GG_p$ would not be enough to prove the mean value formula, since in this setting the surfaces $\{t = \text{constant}\}$ would have codimension $2$, hence being negligible. For further comments on this topic, we refer the reader to the introduction of \cite{pallara2023meanvalueformulasclassical}, or to the  monograph \cite{cassano2016some} about Geometric Measure Theory on Carnot groups.

We now comment on \textbf{(M2)}. The pointwise vanishing integral condition \eqref{pointwise-vanishing}, as emphasized in \cite{CupiniLanconelli2021}, is a fundamental requirement for the proof of the mean value formulas, and is closely related to the local properties of $\G^*$. In the stationary case, this property is straightforward to establish. In the evolutive setting, however, its proof is more delicate and relies on precise estimates, that in our case will be the Gaussian lower and upper bounds, together with the long time bound given by \emph{\ref{item5}.} in Theorem \ref{Th-Fundamental}.

We conclude with a final remark. In Theorem \ref{Th-Fundamental} we established the relation \eqref{index} between $\G$ and $\G^*$, if \textbf{(H1)}, \textbf{(H2)}, \textbf{(H3)} and \textbf{(H4)} hold. As a result, we can express the mean value formula in terms of the fundamental solution $\G$, instead of $\G^*$. In particular, this means that 
\begin{equation}
	\Omega_r(\z)= \biggl\{ z \in \R^N \times (-\infty,\t): \Gamma(\z,z)>\frac{1}{r}\biggr\},\qquad \mathcal{I}_{r,\e}(\z)= \biggl\{ x \in \R^N : \Gamma(\z,x,\t-\e)>\frac{1}{r}\biggr\}.
\end{equation}
Finally, before presenting the representation formula, we introduce the relevant kernel. For every $z,\z\in\rnn$ we set 
\begin{equation}\label{Unbounded-kernel}
	M_\GG(\z,z):=\frac{\langle A(z)\nabla_{\GG} \G(\z,z),\nabla_{\GG}\G(\z,z) \rangle}{{\G(\z,z)}^2}.
\end{equation}
\begin{proposition}[Mean value formula]\label{Pr-meanvalue}
	Let $\H$ be a differential operator as in \eqref{HG1} satisfying assumptions \textbf{(H1)}, \textbf{(H2)}, \textbf{(H3)} and \textbf{(H4)}. Let $\Omega$ be an open subset of $\R^{N+1}$, $f \in C(\Omega)$ and let $u$ be a classical solution to $\H u =f$ in $\Omega$. Then, for every $\z \in \Omega$ and  for every $r>0$ such that ${\Omega_r(\z)} \subset \Omega$, we have 
	\begin{equation}
		\begin{aligned} 
			u(\z) =& \frac{1}{r} \bigintsss_{\Omega_r(\z)} M_\GG (\z, z) u(z) \, dz +\frac{1}{r}  \bigintsss_0^{r} \Biggl(
			\bigintsss_{\Omega_\rho (\z)} f (z) \biggl( \tfrac{1}{\rho} - \Gamma(\z,z) \biggr) dz \Biggr)\, d \rho
			\\+&\frac{1}{r} \bigintsss_0^{r}  \frac{1}{\rho} \Biggl(
			\bigintsss_{\Omega_\rho (\z)} \big( \div_\GG b(z) - c(z) \big) u(z) \, dz \Biggr) \, d \rho. 
		\end{aligned}
	\end{equation} 
\end{proposition}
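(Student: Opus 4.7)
The plan is to obtain the formula as a direct application of Theorem 2.2 in \cite{pallara2023meanvalueformulasclassical}, which produces precisely this identity (modulo the cosmetic replacement of $\Gamma^*$ by $\Gamma$ via the duality \eqref{index}) once the structural hypotheses \textbf{(M1)} and \textbf{(M2)} are verified. Checking \textbf{(M1)} is routine: the homogeneous Lie group $\widehat\GG = \big(\R^{N+1}, \widehat\circ, \widehat\delta_r\big)$ defined by
\begin{equation*}
(x,t)\,\widehat\circ\,(\xi,\tau) := (x\circ\xi,\, t+\tau), \qquad \widehat\delta_r(x,t) := (\delta_r(x),\, r t),
\end{equation*}
inherits from \textbf{(H3)} the left-invariance and $\widehat\delta_r$-homogeneity of degree one of $X_1,\ldots,X_{m_1}$, while $X_0 = \partial_t$ is trivially left-invariant under $\widehat\circ$ and $\widehat\delta_r$-homogeneous of degree one by construction.

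The core of the proof is the verification of \textbf{(M2)}, which I would carry out using the precise Gaussian estimates from Theorem \ref{Th-Fundamental} together with the duality \eqref{index}. Boundedness of $\Omega_r(\zeta)$ follows from item \ref{item5} of Theorem \ref{Th-Fundamental}: choosing $r_0 < 1/S$ confines every point $z = (x,t) \in \Omega_r(\zeta)$ (with $r \leq r_0$) to the strip $\{0 < \tau - t < 1\}$, and the Gaussian upper bound \eqref{Gaussian-estimates} applied there forces $d_X(x,\xi)^2 \lesssim (\tau-t)\,|\log(\tau-t)|$, hence boundedness. For the pointwise vanishing integral \eqref{pointwise-vanishing}, the same Gaussian bound with $\epsilon = \tau - t$ shows that $\mathcal{I}_{r,\epsilon}(\zeta)$ is contained in a $d_X$-ball around $\xi$ of radius $O\big(\sqrt{\epsilon\,|\log\epsilon|}\big)$; since the Haar measure of $d_X$-balls of radius $R$ scales as $R^Q$, this gives $\mathscr{H}^N(\mathcal{I}_{r,\epsilon}(\zeta)) \to 0$. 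The second condition is handled by using the left-invariance and $\delta_r$-homogeneity of $d_X$: the substitution $v = \delta_{1/\sqrt\epsilon}(\xi^{-1}\circ x)$ reduces $\int_{\R^N\setminus\mathcal{I}_{r,\epsilon}(\zeta)}\Gamma^*(x,\tau-\epsilon,\zeta)\,dx$ to a Gaussian tail of the form $\int_{d_X(v,0)^2 > C|\log\epsilon|} \exp(-c_u d_X(v,0)^2)\,dv$, which vanishes as $\epsilon \to 0$ by integrability.

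Once \textbf{(M1)} and \textbf{(M2)} are established, Theorem 2.2 of \cite{pallara2023meanvalueformulasclassical} delivers the formula in terms of $\Gamma^*(\cdot,\zeta)$; a final rewriting via \eqref{index} turns $\nabla_\GG \Gamma^*(\cdot,\zeta)$ into $\nabla_\GG \Gamma(\zeta,\cdot)$ and recovers the stated kernel $M_\GG(\zeta,z)$ together with the correct placement of $\Gamma(\zeta,z)$. The main obstacle is the pointwise vanishing integral condition: unlike the stationary case where it is essentially automatic, here one must exploit a sharp interplay between the on-diagonal singularity of the fundamental solution and its off-diagonal Gaussian decay, a balance that is made quantitative precisely by the dilation structure of $d_X$.
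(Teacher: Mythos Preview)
Your approach is essentially the same as the paper's: verify \textbf{(M1)} by building $\widehat\GG$ exactly as you do, then verify \textbf{(M2)} via the Gaussian estimates of Theorem~\ref{Th-Fundamental}, and finally invoke Theorem~2.2 of \cite{pallara2023meanvalueformulasclassical} together with the duality \eqref{index}. The verification of \textbf{(M1)}, the boundedness of $\Omega_r(\zeta)$, and the first limit in \eqref{pointwise-vanishing} are all handled correctly.

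There is, however, a genuine gap in your treatment of the second limit in \eqref{pointwise-vanishing}. You write that the substitution $v=\delta_{1/\sqrt\epsilon}(\xi^{-1}\circ x)$ ``reduces'' the integral over $\R^N\setminus\mathcal{I}_{r,\epsilon}(\zeta)$ to a Gaussian tail $\int_{\{d_X(v,0)^2>C|\log\epsilon|\}}\exp(-c_u d_X(v,0)^2)\,dv$. But the Gaussian \emph{upper} bound on $\Gamma$ only tells you that $\mathcal{I}_{r,\epsilon}(\zeta)$ is \emph{contained} in a $d_X$-ball of radius $\sim\sqrt{\epsilon|\log\epsilon|}$; taking complements, this says $\R^N\setminus\mathcal{I}_{r,\epsilon}(\zeta)$ \emph{contains} the exterior of that ball, which is the wrong direction for an upper bound on the integral. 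To place $\R^N\setminus\mathcal{I}_{r,\epsilon}(\zeta)$ inside a tail region you need the reverse inclusion, namely that $\mathcal{I}_{r,\epsilon}(\zeta)$ \emph{contains} a ball of comparable radius, and for this you must invoke the Gaussian \emph{lower} bound \eqref{Gaussian-estimates-lower}. This is precisely what the paper does: from \eqref{Gaussian-estimates-lower} one obtains
\[
\Big\{x\in\R^N:\ d_X(\xi,x)^2\le c_l\,\epsilon\,\ln\!\big(\tilde c_\epsilon r/\epsilon^{Q/2}\big)\Big\}\subset \mathcal{I}_{r,\epsilon}(\zeta),
\]
so that after bounding $\Gamma$ above by the Gaussian and performing your substitution, the domain of integration genuinely sits inside $\{d_X(v,0)^2\ge c_l\ln(\tilde c_\epsilon r/\epsilon^{Q/2})\}$, which tends to the empty set as $\epsilon\to0$. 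Once you insert this use of the lower bound, your argument matches the paper's and is complete.
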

\begin{proof}
	This representation formula corresponds to the solid mean value formula stated in Theorem 2.2 in \cite{pallara2023meanvalueformulasclassical}. We need to prove that we are in position to apply the aforementioned theorem, namely that \textbf{(M1)} and \textbf{(M2)} hold under assumptions \textbf{(H1)}, \textbf{(H2)}, \textbf{(H3)} and \textbf{(H4)}.
		
	\medskip
	The existence of the Carnot group $\GG = \big(\rn,\circ,\d_r\big)$ introduced in Section \ref{Carnot}, directly implies \textbf{(M1)}. Indeed, we define on $\rnn$ the following composition and dilations law
	\begin{equation}
		z\,\widehat\circ\,\z :=\left(x\circ \xi,t+\t\right),\qquad \widehat\d_{r}(x,t)=\Big(\d_r(x),rt\Big),\qquad \forall z,\z \in \rnn,r>0.
	\end{equation}
	Since $\GG$ is a Carnot group on $\rn$, as an obvious consequence it follows that  $\widehat{\GG} = \left(\rnn,\widehat\circ,\widehat\delta_r\right)$ is  an homogeneous Lie group on $\rnn$. Moreover, the vector fields $X_1,\ldots,X_{m_1}$ and $X_0=\p_t$ are left-invariant  w.r.t. $\widehat \circ$  and homogeneous of degree 1 w.r.t. $\big\{\widehat \delta_r\big\}_{r>0}$, namely
	\begin{equation}
		X_i \Big(f\big(z\,\widehat\circ\,\z\big) \Big)=\big( X_i f\big)\big(z\,\widehat\circ\,\z\big), \qquad X_i \left(f \left(\widehat\d_{r}(z)\right) \right)=r \big(X_i f \big)\left(\widehat\d_{r}(z)\right), \qquad i=0,\ldots, m_1,
	\end{equation}
	for every $f\in C^\infty\big(\R^{N+1}\big)$, thus implying \textbf{(M1)}.

	\medskip
	Let us turn our attention to condition \textbf{(M2)}. By \emph{\ref{item5}.} and \emph{\ref{item8}.} in Theorem \ref{Th-Fundamental}, we infer that there exists $\bar r>0$ such that 
	\begin{equation}\label{portopalo-di-capo-passero}
		\Omega_r(\z)= \biggl\{ z \in \R^N \times (\t-1,\t): \Gamma(\z,z)>\frac{1}{r}\biggr\},
	\end{equation}
	for every $r\leq \bar r$. Let us now consider the Gaussian upper bound \eqref{Gaussian-estimates} with $T=1$. If we set $r_0\leq\min\left(\bar r,c_1^{-1}\right)$, it follows 
	\begin{equation}
		\begin{aligned}
			\Omega_r(\z)&\subset \left\{ z \in \R^N \times (\t-1,\t): d_X(\x,x)^2\leq c_u(\t-t) \ln\left(\frac{c_1\,r }{(\t-t)^\frac{Q}{2}}\right) \right\},\\
			\mathcal{I}_{r,\e}(\z)&\subset \left\{ x \in \R^N : d_X(\x,x)^2\leq c_u\e\ln\Bigg(\frac{c_1\,r }{ \e^\frac{Q}{2}}\Bigg) \right\},
		\end{aligned}
	\end{equation}
	provided $\e<1$. The local boundedness of $\O_r(\z)$ and the first point of \eqref{pointwise-vanishing} follows. We conclude by addressing the second point of \eqref{pointwise-vanishing}. With this aim, we employ the Gaussian lower bound \eqref{Gaussian-estimates-lower}, from which it follows that
	\begin{equation}
		\left\{ x \in \R^N : d_X(\x,x)^2\leq c_l\e \ln\Bigg(\frac{\tilde c_\e r}{\e^\frac{Q}{2}}\Bigg) \right\}\subset \mathcal{I}_{r,\e}(\z),
	\end{equation}
	for every $\e>0$. Hence, since $\G$ is positive, by the Gaussian upper bound \eqref{Gaussian-estimates} we infer that for every $\e<1$ there exists $c_1,c_u>0$ such that
	\begin{equation}
		\begin{aligned}
			0&\leq \int_{\R^N \backslash \mathcal{I}_{r, \varepsilon} (\z)} \Gamma(\z,x,\t - \varepsilon) d x \leq \int_{\left\{d_X(\x,x)^2\geq c_l\e \ln\left(\frac{\tilde c_\e r}{ \e^\frac{Q}{2}}\right) \right\}}  \Gamma(\z,x,\t - \varepsilon) d x\\
			&\leq\frac{c_1}{\e^\frac{Q}{2}} \bigintsss_{\left\{d_X(\x,x)^2\geq c_l\e \ln\left(\frac{\tilde c_\e r}{\e^\frac{Q}{2}}\right) \right\}} \exp{\left(-c_u\frac{d_X(\x,x)^2}{\e}\right)} dx \\
			&\Big(\text{by setting }y= \d_{\frac{1}{\sqrt{\e}}}\big(x^{-1}\circ \x\big)\Big)\\
			&=c_1 \bigintsss_{\left\{d_X(y,0)^2\geq c_l \ln\left(\frac{\tilde c_\e r}{\e^\frac{Q}{2}}\right) \right\}} \exp{\Big(-c_u d_X(y,0)^2\Big)} dy,
		\end{aligned}
	\end{equation}
	and the proof is complete, since the last integral vanishes as $\e\to 0$.
\end{proof}
\begin{remark}
	To be more precise, the kernel appearing in Theorem 2.2 in \cite{pallara2023meanvalueformulasclassical} is actually of the following form
	\begin{equation}\label{A-kernel}
		M_\GG(\z,z):=\frac{\langle \bar A(z)\nabla_{\widehat \GG} \G(\z,z),\nabla_{\widehat \GG}\G(\z,z) \rangle}{{\G(\z,z)}^2},
	\end{equation}
	where $\bar A$ denotes the matrix with entries $a_{ij}$ for $i,j=1,\ldots,m_1$, and $a_{(m_1+1)j}=a_{j(m_1+1)}=0$, while $\nabla_{\widehat \GG}$ denotes the horizontal gradient on $\widehat \GG$, namely
	\begin{equation}
		 \nabla_{\GG}f:=\sum_{i=1}^{m_1} \big(X_i f\big)X_i + (\p_t f)(\p_t).
	\end{equation}	
	However, the previous kernel is formally identical to the one introduced in \eqref{Unbounded-kernel}.	
\end{remark}
As mentioned before, a questionable feature displayed in the previous proposition is the unboundedness of the kernels. This can be easily checked in the case of the classical heat operator, where $M_\GG(\z,z)=\frac{|\x-x|^2}{4(\t-t)^2}$, the so-called Pini-Watson kernel. Moreover, another limitation is that the kernel cannot be bounded from below by a strictly positive constant. These two issues prevent us from taking advantage of the classical elementary argument developed for harmonic functions to establish both the maximum propagation and the Harnack inequality.  Hence, starting from Proposition \ref{Pr-meanvalue}, we aim to prove a representation formula with a kernel that is not only bounded but, also possesses a degree of regularity arbitrarily high. This boundedness will be crucial in particular to prove the parabolic Harnack inequality given in Section \ref{Parabolic Harnack inequality}. Our technique to prove the mean value formula with a more regular kernel is based on Hadamard's descent method, already used by Kuptsov in \cite{Kupcov}, and later developed in \cite{CupiniLanconelli2021,GarofaloLanconelli-1989, GarofaloLanconelli, malagoli, rebucci24}. In order to outline the procedure we follow, we first introduce some further notation. For every $m \in \N$, we introduce the operator
\vspace{-0.1cm}
\begin{equation}\label{Eq:oper-ltilde}
	\tilde{\H}^{(m)}:= \Delta_y^{(m)}+ \H ,
	\vspace{-0.1cm}
\end{equation}
where $\Delta_y^{(m)}$ is the classical Laplace operator in $\R^m$ in the variables $y=(y_1,\ldots,y_m)$. Moreover, we define the function
\begin{equation}\label{Eq:Gammatilde}
	\begin{aligned}
		\tilde{\Gamma}^{(m)}(\y,\x,\t,y,x,t)&=H^{(m)}(\y,\t,y,t)\,\Gamma(\x,\t,x,t)\\
		&=\frac{1}{\big(4\pi(\t-t)\big)^{m/2}}\exp \left(-\frac{|\y-y|^2}{4(\t-t)} \right)\,\Gamma(\x,\t,x,t),
	\end{aligned}
	\vspace{-0.1cm}
\end{equation}
where $\Gamma$ and $H^{(m)}$ denote the fundamental solutions of $\H$ and of the heat equation in $\R^m$, respectively.
	
By standard arguments it follows that $\tilde{\Gamma}^{(m)}$ is a fundamental solution to $\tilde{\H}^{(m)}$ and that, if $u=u(x,t)$ is a solution to $\H u =f$, then the function
\begin{equation}\label{Eq:defutilde}
	\tilde{u}^{(m)}(y,x,t):=u(x,t), \quad y \in \R^m,
\end{equation}
is a solution to $\tilde{\H}^{(m)}\tilde{u}^{(m)}=\tilde{f}^{(m)}$, where
\begin{equation*}
	\tilde{f}^{(m)}(y,x,t)=f(x,t), \quad y \in \R^m.
\end{equation*}
By integrating the mean value formula of Proposition \ref{Pr-meanvalue} with respect to the variable $y$, we then obtain new kernels which are bounded for any $m>2$. {\color{black}In order to state and prove our modified mean value formula, we first introduce some further notation:
\begin{equation}\label{Eq:kernels-m}
	\begin{aligned}
		\Gamma^{(m)}(\z,z)&:=\frac{\Gamma(\z,z)}{\big( 4\pi(\t-t)\big)^{m/2}},\\ 
		\Omega_r^{(m)}(\z)&:=\biggl\{ z \in \R^{N+1}:\Gamma^{(m)}(\z,z)  >\frac{1}{r} \biggr\},\\
		N_r(\z,z)&:=2\sqrt{\t-t}\sqrt{\ln\left( r\, \Gamma^{(m)}(\z,z)\right)},\\ 
		M_r^{(m)}(\z,z)&:=\omega_m \,N_r^m(\z,z) \left( M_\GG(\z,z)+\frac{m}{m+2}\,\frac{N_r^2(\z,z)}{4(\t-t)^2}\right),\\
		W_r^{(m)}(\z,z)&:=m\,\omega_m \left(\frac{N_r^m(\z,z)}{rm} -\Gamma^{(m)}(\z,z)\,\frac{4(\t-t)^{m/2}}{2}\, \gamma\left( \frac{m}{2};\frac{N_r^2(\z,z)}{4(\t-t)}\right)\right),
	\end{aligned}
\end{equation}
where $M_\GG(\z,z)$ is the kernel introduced in \eqref{Unbounded-kernel}, $\omega_m$ denotes the Lebesgue measure of the unit ball in $\R^m$  w.r.t. the Euclidean metric, and $\gamma$ is the lower incomplete gamma function, \emph{i.e.}
\begin{equation}\label{Eq:gammafunction}
	\gamma(s;x):=\int_0^x t^{s-1}\mathrm{e}^{-t}dt.
\end{equation}
We are now in a position to state and prove the following proposition. 
\begin{proposition}[Improved mean value formula]\label{prop-meanvalue}
	Let $\H$ be a differential operator of the form \eqref{HG1} satisfying assumptions \textbf{(H1)}, \textbf{(H2)}, \textbf{(H3)} and \textbf{(H4)}. Let $\Omega$ be an open subset of $\R^{N+1}$, $f \in C(\Omega)$ and let $u$ be a classical solution to $\H u =f$ in $\Omega$. Then, for every $\z \in \Omega$ and for every $r >0$ such that $\Omega^{(m)}_r(\z) \subset \Omega$, we have 
	\begin{equation} \label{e-meanvalue} 
		\begin{aligned}
			u(\z) =& \frac{1}{r} \bigintsss_{\Omega_r^{(m)}(\z)} M_r^{(m)} (\z, z) u(z) \, dz 
			+ \frac{1}{r}  \bigintsss_0^{r} \Biggl(
			\bigintsss_{\Omega^{(m)}_\r (\z)} f (z) \, W_\r^{(m)}(\z,z) \, dz \Biggr)\, d \r \\
			+&\frac{1}{r} \bigintsss_0^{r} \frac{\omega_m}{\r} \Biggl(
			\bigintsss_{\Omega^{(m)}_\r (\z)}N_\r^m(\z,z)\, \big( \div_\GG b(z) - c(z) \big) \,u(z) \, dz \Biggr) \, d \r. 
		\end{aligned}
	\end{equation} 
\end{proposition}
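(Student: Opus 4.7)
The plan is to derive \eqref{e-meanvalue} via Hadamard's descent method applied to the solid mean value formula of Proposition \ref{Pr-meanvalue}. Concretely, I would apply that proposition to the lifted operator $\tilde\H^{(m)} = \Delta_y^{(m)} + \H$ on $\R^m \times \R^{N+1}$. The first preparatory step is to check that the enlarged framework still fits the hypotheses of Proposition \ref{Pr-meanvalue}: the product Carnot group $\R^m \times \GG$ (with vector fields $\{\p_{y_j}, X_i\}$ and dilations $(y,x) \mapsto (ry, \d_r x)$) satisfies \textbf{(H1)}--\textbf{(H3)}; assumption \textbf{(H4)} holds with matrix $\tilde A = \mathrm{diag}(I_m, A)$ and unchanged lower-order coefficients; and $\tilde \Gamma^{(m)}$ in \eqref{Eq:Gammatilde} is a fundamental solution of $\tilde \H^{(m)}$, since $\Delta_y^{(m)}$ and $\H$ act on independent variables. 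The function $\tilde u^{(m)}(y,x,t) = u(x,t)$ then solves $\tilde\H^{(m)} \tilde u^{(m)} = \tilde f^{(m)}$ in $\R^m \times \Omega$.

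The key geometric observation is that the lifted superlevel set slices cleanly in $y$: the inequality $\tilde\Gamma^{(m)}(0,\z,y,z) > 1/r$ rearranges to $z \in \Omega_r^{(m)}(\z)$ together with $|y| < N_r(\z,z)$, so at each admissible $z$ the $y$-slice is the Euclidean ball of radius $N_r(\z,z)$. This also shows that the lifted superlevel set at $(0,\z)$ is contained in $\R^m \times \Omega_r^{(m)}(\z) \subset \R^m \times \Omega$, ensuring Proposition \ref{Pr-meanvalue} applies. I would then apply Fubini to each of the three terms of that formula and carry out the $y$-integration explicitly.

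For the principal kernel term, the computation $\nabla_y \tilde\Gamma^{(m)}(0,\z,y,z) = -\frac{y}{2(\t-t)} \tilde\Gamma^{(m)}(0,\z,y,z)$, combined with the fact that $\nabla_\GG$ only affects the factor $\Gamma$ in $\tilde\Gamma^{(m)}$, gives the Hadamard kernel of the lifted operator as $M_\GG(\z,z) + |y|^2/(4(\t-t)^2)$. Integrating over $|y| \leq N_r$ and using $\int_{|y|\leq N_r} |y|^2 \, dy = \frac{m \omega_m}{m+2} N_r^{m+2}$ exactly reproduces $M_r^{(m)}(\z,z)$. For the source-term kernel, the change of variables $s = |y|^2/(4(\t-t))$ reduces $\int_{|y|\leq N_\r} \tilde\Gamma^{(m)}\,dy$ to the lower incomplete gamma function \eqref{Eq:gammafunction}, producing precisely $W_\r^{(m)}$. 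The third term simplifies thanks to $\div_\GG b - c$ being independent of $y$, so the $y$-integration contributes only the Euclidean volume factor $\omega_m N_\r^m$.

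The main obstacle I anticipate is essentially bookkeeping: carefully matching normalizations between the Euclidean heat kernel $H^{(m)}$, the factor $(4\pi(\t-t))^{-m/2}$ absorbed into $\Gamma^{(m)}$, and the incomplete-gamma identity, so that all numerical constants align with the definitions in \eqref{Eq:kernels-m}. At a conceptual level, the reason the new kernels $M_r^{(m)}$ and $W_\r^{(m)}$ are bounded, unlike the original $M_\GG$, is that integrating the singular factor against the vanishing Euclidean weight $\omega_m N_r^m$ tames its growth near the pole, with higher $m$ yielding arbitrarily high regularity, which is precisely what will be needed for the parabolic Harnack inequality in Section \ref{Parabolic Harnack inequality}.
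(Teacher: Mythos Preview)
Your proposal is correct and follows essentially the same route as the paper: apply Proposition \ref{Pr-meanvalue} to the lifted operator $\tilde\H^{(m)}=\Delta_y^{(m)}+\H$, observe that the lifted kernel splits as $M_\GG(\z,z)+|y|^2/(4(\t-t)^2)$ and that the lifted superlevel set slices as $\{z\in\Omega_r^{(m)}(\z),\ |y|<N_r(\z,z)\}$, then integrate out $y$ via Fubini in each of the three terms. The paper carries this out with a general center $\y$ rather than $\y=0$, but since $\tilde u^{(m)}$ is independent of $y$ this is immaterial; your identification of the three $y$-integrals (ball volume, second moment, incomplete gamma) matches the paper's computations of $I_1$, $I_2$, $I_3$ exactly.
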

\begin{proof}
	As a first step, we observe that, for every $m \in \N$, the operator $\tilde{\H}^{(m)}$ in \eqref{Eq:oper-ltilde} belongs to the class of operators \eqref{HG1} with the matrix $A$ replaced by the $(m+N) \times (m+N)$ real matrix
	\begin{equation*}
		\tilde{A}(y,z)=\begin{pmatrix}\mathbb{I}_m  &\mathbb{O}\\\mathbb{O} & A(z)\end{pmatrix},\qquad y\in\R^{m},z\in\rnn,
	\end{equation*}
	where $A$ denotes the matrix related to operator $\H$ in \eqref{HG1}. Hence, operator $\tilde{\H}^{(m)}$ clearly satisfies the assumptions of Proposition \ref{Pr-meanvalue}, and we infer 
	\begin{equation}\label{Eq:meanMtilde}
		\begin{aligned}
			\tilde{u}^{(m)}(\y,\z)= &\frac{1}{r} \bigintsss_{\tilde{\Omega}_r^{(m)}(\y,\z)} \tilde{M}_\GG^{(m)} (\y,\z, y,z) \tilde{u}^{(m)}(y,z)\, dy\, dz \\
			+& \frac{1}{r}  \bigintsss_0^{r} \Biggl(
			\bigintsss_{\tilde{\Omega}^{(m)}_\r (\y,\z)} \tilde{f}^{(m)}(y,z) \,\left(\frac{1}{\r}-\tilde{\Gamma}^{(m)}(\y,\z,y,z) \right)dy  \, dz \Biggr)\, d \r 
			\\
			+&\frac{1}{r} \bigintsss_0^{r}  \frac{1}{\r} \Biggl(
			\bigintsss_{\tilde{\Omega}^{(m)}_\r (\y,\z)} \big( \div_\GG b(z) - c(z) \big) \,\tilde{u}^{(m)}(y,z)\,dy \, dz \Biggr) \, d \r,
		\end{aligned}	
	\end{equation}
	where $\tilde{\Omega}^{(m)}_r(\y,\z)$ is the super-level set relevant to $\tilde{\Gamma}^{(m)}$ in \eqref{Eq:Gammatilde}, \emph{i.e.}
	\begin{equation}\label{Eq:omegatildem}
		\tilde{\Omega}^{(m)}_r(\y,\z):=\biggl\{ (y,z) \in \R^{m+N} \times (-\infty,\t): \tilde{\Gamma}^{(m)}(\y,\z,y,z)>\frac{1}{r}\biggr\}.
	\end{equation}
	We first point out that, in virtue of \eqref{divG}, the kernel $\tilde{M}^{(m)}(\y,\z,y,z)$ in \eqref{Eq:meanMtilde} corresponding to operator $\tilde{\H}^{(m)}$ is equal to 
	\begin{equation}
		\begin{aligned}
			\tilde{M}^{(m)}(\y,\z,y,z)=&\\
			&+ \frac{H^{(m)}(\y,\t,y,t)^2\displaystyle\sum\nolimits_{i,j=1}^{m_1}a_{ij}(z)X_i\G(\z,z)X_j\G(\z,z)}{\tilde{\Gamma}^{(m)}(\y,\z,y,z)^2}\\
			&\qquad\qquad\qquad\qquad\qquad\qquad+\frac{\G(z,\z)^2\displaystyle\sum\nolimits_{j=1}^{m}\p_{y_j}^2H^{(m)}(\y,\t,y,t)}{\tilde{\Gamma}^{(m)}(\y,\z,y,z)^2}.
		\end{aligned}
	\end{equation}
	Then, owing to \eqref{divG} and \eqref{Eq:Gammatilde}, the kernel $\tilde{M}^{(m)}(\y,\z,y,z)$ becomes
	\begin{equation}\label{Eq:Mtildesum}
		\tilde{M}^{(m)}(\y,\z,y,z)=M_\GG(\z,z)+\frac{|\y-y|^2}{4(\t-t)^2},
	\end{equation}
	where $M_\GG(\z,z)$ is the kernel defined in \eqref{Unbounded-kernel}, and $\frac{|\y-y|^2}{4(\t-t)^2}$ is the classical Pini-Watson kernel in $\R^{m+1}$.
		
	We observe that, as a consequence of \eqref{Eq:Gammatilde}, we can rewrite the super-level set $\tilde{\Omega}_r^{(m)}(\y,\z)$ in \eqref{Eq:omegatildem} as follows
	\begin{equation}\label{Eq:OmegaTilde}
		\tilde{\Omega}_r^{(m)}(\y,\z)= \Biggl\{ (y,z) \in \R^{m+N+1}: r\, \frac{\Gamma(\z,z)}{\big(4\pi(\t-t)\big)^{m/2}} > \exp\Bigg( \frac{|\y-y|^2}{4(\t-t)}\Bigg) \Biggr\}.
	\end{equation}
	In virtue of \eqref{Eq:defutilde}, we now rewrite equation \eqref{Eq:meanMtilde} as follows
	\begin{equation}\label{Eq:meanMtildeu}
		\begin{aligned}
			u(\z)= &\frac{1}{r} \bigintsss_{\tilde{\Omega}_r^{(m)}(\y,\z)} \tilde{M}^{(m)} (\y,\z, y,z) u(z)\, dy\, dz \\
			\quad+ &\frac{1}{r}  \bigintsss_0^{r} \left(
			\bigintsss_{\tilde{\Omega}^{(m)}_\r (\y,\z)} f(z) \,\left(\frac{1}{\r}-\tilde{\Gamma}^{(m)}(\y,\z,y,z) \right)dy  \, dz \right)\, d \r \\
			\quad+& \frac{1}{r} \bigintsss_0^{r}  \frac{1}{\r} \left(\bigintsss_{\tilde{\Omega}^{(m)}_\r (\y,\z)} \big( \div_{\GG} b(z) - c(z) \big) \,u(z)\,dy \, dz \right) \, d \r\\
			=:&\,I_1+I_2+I_3.
		\end{aligned}	
	\end{equation}
	We first focus on the term $I_1$. Exploiting \eqref{Eq:Mtildesum} and \eqref{Eq:OmegaTilde}, we get rid of the variable $y$ by integrating as follows
	\begin{equation}\label{Eq:I1}
		\begin{aligned}
			I_1&=\frac{1}{r}\bigintsss_{\tilde{\Omega}_r^{(m)}(\y,\z)} \left( M_\GG(\z,z)+\frac{|\y-y|^2}{4(\t-t)^2}\right) u(z)\, dy\, dz \\
			&=\frac{1}{r}\bigintsss_{{\Omega}_r^{(m)}(\z)}\bigintsss_{\lbrace |\y-y|<N_r(\z,z) \rbrace} \left( M_\GG(\z,z)+\frac{|\y-y|^2}{4(\t-t)^2}\right) u(z)\, dy\, dz\\
			&=\frac{1}{r} \bigintsss_{{\Omega}_r^{(m)}(\z)} M_r^{(m)}(\z,z) u(z)\, dz,
		\end{aligned}
	\end{equation}
	where the quantities $\Omega_r^{(m)}(\z)$, $N_r(\z,z)$ and $M_r^{(m)}(\z,z)$ are defined in \eqref{Eq:kernels-m}. In order to take care of $I_2$, we first recall the following equality 
	\begin{equation*}
		\int_0^R 	\mathrm{e}^{-\frac{r^2}{c}}r^{m-1}dr=\frac{c^{\frac{m}{2}}}{2}\int_0^{\frac{R^2}{c}}\mathrm{e}^{-r}r^{\frac{m}{2}-1}dr=\frac{c^{\frac{m}{2}}}{2}\gamma\left(\frac{m}{2};\frac{R^2}{c} \right),
	\end{equation*}
	where $\gamma$ is the lower incomplete gamma function (we recall the definition given in \eqref{Eq:gammafunction}). As a consequence of the previous equality and \eqref{Eq:OmegaTilde}, we obtain
	\begin{equation}\label{Eq:I2}
		\begin{aligned}
			I_2&=\frac{1}{r}  \bigintsss_0^{r} 
			\bigintsss_{{\Omega}^{(m)}_\r (\y,\z)}  \left(\bigintsss_{\lbrace |\y-y|< N_\r(\z,z) \rbrace}\left(\frac{1}{\r}-{\Gamma}(\z,z)\frac{\exp \left(-\frac{|\y-y|^2}{\big(4(\t-t)\big)^{\frac{m}{2}}} \right)}{\big(4\pi(\t-t)\big)^{\frac{m}{2}}} \right)dy  \right) f(z) dz  d \r \\
			&=\frac{1}{r} \bigintsss_0^{r} \left(\bigintsss_{{\Omega}^{(m)}_\r (\z)} W_\r^{(m)}(\z,z)f(z)\,dz\right)\, d\r,
		\end{aligned}
	\end{equation}
	where $W_\r^{(m)}(\z,z)$ was introduced in \eqref{Eq:kernels-m}. Finally, exploiting once again \eqref{Eq:OmegaTilde}, we infer
	\begin{equation}\label{Eq:I3}
		\begin{aligned}
			I_3&=\frac{1}{r} \int_0^{r}  \frac{1}{\r} \left(
			\int_{{\Omega}^{(m)}_\r (\z)}\big( \div_\GG b(z) - c(z) \big) \,u(z) \left(\int_{\lbrace |\y-y|< N_\r(\z,z) \rbrace}dy \right)  dz \right) \, d \r\\
			&=\frac{1}{r} \int_0^{r}  \frac{1}{\r} \left(
			\int_{{\Omega}^{(m)}_\r (\z)}\big( \div_\GG b(z) - c(z) \big) \,u(z) \, N_\r^m(\z,z)	\, \omega_m\,  dz \right) \, d \r.
		\end{aligned}
	\end{equation}
	The thesis follows combining \eqref{Eq:meanMtildeu}, \eqref{Eq:I1}, \eqref{Eq:I2} and \eqref{Eq:I3}.
\end{proof}

\section{Proof of Theorem \ref{Th-maximum}}\label{Maximum}
Let $\z\in\O$ such that $u(\z)=\max_\O u\geq 0$. Since $\H1=c$, from \eqref{e-meanvalue} we get
\begin{equation}
	\begin{aligned}\label{mvfdiff}
		0&=\boxed{\frac{1}{r}\int_{\O^{(m)}_r(\z)}M_r^{(m)}(\z,z)\big(u(z)-u(\z)\big)dz}_{=I_1}\\
		& +\boxed{\frac{1}{r}\int_{0}^r\frac{\omega_m}{\r}\left(\int_{\O^{(m)}_\r(\z)}N_\r^m(\z,z)\big(\div_{\GG} b(z)-c(z)\big)\big(u(z)-u(\z)\big)dz\right)d\r}_{=I_2}\\
		& +\boxed{\frac{1}{r}\int_{0}^r\left(\int_{\O^{(m)}_\r(\z)}\big(f(z)-\frac{\omega_m}{\r}u(\z)c(z)\big) W_\r^{(m)}(\z,z) dz\right)d\r}_{=I_3},\\
	\end{aligned}
\end{equation}
for every $r >0$ such that $\Omega^{(m)}_r(\z) \subset \Omega$. We now claim that for every $\zeta \in \Omega$ such that $u(\zeta)=\max_{\Omega}u$, we have
\begin{equation}\label{max-prog-ball}
	u(z)=u(\z),\qquad \text{for every }\,z\in \O^{(m)}_r(\z).
\end{equation}
In order to prove the claim, we take a closer look to the kernels appearing in \eqref{mvfdiff} (that we recall to be defined in \eqref{Eq:kernels-m}). By definition, we notice that $N_\r^m(\z,z),W_\r^{(m)}(\z,z)>0$ for every $z\in \O^{(m)}_r(\z)$. Moreover, thanks to the ellipticity assumption, the kernel $M_\GG$ defined in \eqref{Unbounded-kernel} is nonnegative, thus implying $M_r^{(m)}(\z,z) >0$ for every $z\in \O^{(m)}_r(\z)$. Since $f\leq 0$, $c\geq 0$, $\div_\GG b-c\geq 0$, and $u(\z)$ is a nonnegative maximum point, we deduce that $I_1,I_2,I_3\leq 0$. Hence, the three integrals in \eqref{mvfdiff} vanish and consequently, by the strictly positivity of the kernel, $u(z)-u(\z)=0$ for $\mathcal{H}^{N+1}$ almost every $z \in \O^{(m)}_r(\z)$. Claim \eqref{max-prog-ball} then follows from the continuity of $u$.

We now take advantage of claim \eqref{max-prog-ball} to conclude the proof of Theorem \ref{Th-maximum}. Let $z\in\A_\z(\O)$, and let $\gamma:[0,1]\to \O$ be an $\H$-admissible path such that $\gamma(0)=\z$ and $\gamma(1)=z$. We want to prove that $u\big(\gamma(s)\big)=u(\z)$ for every $s\in [0,1]$.

We set
\begin{equation}
	I:= \Big\{t\in[0,1] \,:\, u\big(\gamma(s)\big)=u(\z) \,\text{ for every }\, s \in [0,t] \Big\},
\end{equation}
and define $\bar t:=\sup I$. Clearly, $I\neq \emptyset$ since $0\in I$. Moreover, by the continuity of $u$ and $\gamma$, $I$ is closed, hence $\bar t\in I$. Suppose by contradiction that $\bar t < 1$, then we set $\bar z=(\bar x,\bar t):=\gamma\big(\bar t\big)$, and we recall that $\z\in\O$ and $u(\z)=\max_\O u$. We claim that there exists $r_1,s_1>0$ such that $\O^{(m)}_{r_1}(\bar z)\subset\O$, and
\begin{equation}\label{gammain}
	\gamma \big(\bar t + s\big)\in \O^{(m)}_{r_1}(\bar z), \qquad \text{for every }\,s\in[0,s_1).
\end{equation}
Then, as a consequence of \eqref{max-prog-ball}, we get $u\big(\gamma (\bar t + s)\big)= u(\bar z)=u(\z)$ for every $s\in(0,s_1)$, which contradicts $\bar t=\sup I$ and concludes the proof of our statement.

Thus, we are left with proving claim \eqref{gammain}. Since $\bar z\in\O$ by the definition of $\H$-admissible path, by Gaussian estimates we infer that there exists $\bar r$ small enough such that $\O^{(m)}_{\bar r}(\bar z)\subset\O$). Hence, to prove the claim we only need to show that there exist $r_1\leq \bar r$ and $s_1>0$ such that \eqref{portopalo-di-capo-passero} holds. From \emph{\ref{item5}.} and \emph{\ref{item8}.} in Theorem \ref{Th-Fundamental}, we infer the existence of $r_1\leq \bar r,c,\bar c>0$ (depending only on structural assumptions and $m$) such that 
\begin{equation}
	\O_{r_1}^{(m)}(\bar z) \supseteq \O_{r_1}^{*}(\bar z):=\Bigg\{(x,t)\in \rn\ti \big(\bar t-1,\bar t\,\big) : d_X(x,\bar x)^2\leq c \big(\bar t-t\big)\Big(\ln \big(r_1\bar c\big)-\ln\big(\bar t-t\big)\Big)\Bigg\}.
\end{equation}
Therefore, to prove \eqref{gammain} it is enough to show that there exists $s_1$ such that $\gamma \big(\bar t + s\big)\in \O^{*}_{r_1}(\bar z)$ for every $s<s_1$. By definition of $\H$-admissible path we have 
\begin{equation}
	\gamma\big(\bar t+s\big) =  \big(\gamma_*(\bar t + s),\bar t-s\big)
\end{equation}
for some  X-trajectory $\gamma_*$ with control function $\omega=(\omega_1,\ldots, \omega_{m_1})$, namely $\gamma_*\in \mathcal{C}$ such that
\begin{equation}	
	\dot{\gamma_*}(\t)=\sum_{i=1}^{m_1}\omega_i(\t)X_i\big(\gamma^*(\t)\big),\qquad \t\in [0,1].
\end{equation}
We set 
\begin{equation}
	\gamma^*(\t):=\gamma_*\big(\bar t+\t s\big),\qquad \t \in [0,1].
\end{equation}
Clearly, $\gamma^*$ is an X-trajectory with control function
\begin{equation}
	\omega^*(\t)=s\,\omega\big(\bar t+\t s\big),\qquad \t\in[0,1],
\end{equation}
satisfying $\gamma^*(0)=\gamma_*(\bar t)=\bar x$ and $\gamma^*(1)=\gamma_*(\bar t + s)$. Hence, by \eqref{eq:d1-dp} we obtain
\begin{equation}
	d_X\big(\gamma_*(\bar t + s),\bar x\big) =d^{(2)}_X\big(\gamma_*(\bar t + s),\bar x\big)\leq \sqrt{\int_{\bar t}^{\bar t +s}s |\omega(\t)|^2d\t}\leq \sqrt{s}\,\|\omega\|_{L^2(0,1)}.
\end{equation}
Consequently, $\gamma\big(\bar t+s\big)=\big(\gamma_*(\bar t + s), \bar t-s\big)\in \O^{*}_{r_1}(\bar z)$ provided we pick $s_1\leq r_1\bar c\, \exp\Big({-\|\omega\|_{L^2(0,1)}^2/c}\Big)$, and this concludes the proof of claim \eqref{gammain}.

To summarize the above, we proved that  $u(z)=u(\z)$ for every $z\in\A_\z(\O)$. By the continuity of $u$ we conclude that this holds also for every $z\in \overline{\A_\z(\O)}$. Finally, since $u$ is constant in $\overline{\A_\z(\O)}$, by $\H u=f$ we conclude that $u(\z)c(z)= f(z)$ for every $z\in \overline{\A_\z(\O)}$.

\medskip
We finally point out that the condition on the sign of $u(\z)$ was exploited only to ensure that $u(\z)c(z)$ has the desired sign. If $c\equiv 0$, the needed condition is always satisfied, hence we conclude that $f=0$. This concludes the proof of Theorem \ref{Th-maximum}.

\section{Parabolic Harnack inequality}\label{Parabolic Harnack inequality}
In this section we establish a \emph{parabolic-type} Harnack inequality, which will be crucial in deriving the \emph{invariant} Harnack inequality stated in Theorem \ref{Th-Harnack}. To this end, we first recall the notion of $\GG$-ball of  radius $r$ and center $x_0$ and of $\GG$-parabolic cylinder of  radius $r$ and center $z_0=(x_0,t_0)$
\begin{equation}\label{x-ball}
	B_r(x_0)=\Big\{ x \in \R^N \, : \, d_{X}(x,x_0) \leq r \Big\},\qquad 	Q_r(z_0)=B_r(x_0)\times \Big(t_0-r^2,t_0\Big),
\end{equation}
where $d_X$ denotes the Carnot-Carathéodory distance defined in Definition \ref{CC-distance}.
\begin{proposition}\label{Parabolic-Harnack}
	Let $\H$ be a differential operator of the form \eqref{HG1} satisfying assumptions \textbf{(H1)}, \textbf{(H2)}, \textbf{(H3)} and \textbf{(H4)}. Then, there exist four positive constants $C_P,r_1,\e_1,\vartheta_1$ (only depending on structural assumptions), with $\e_1$, $\vartheta_1 \in (0,1)$, such that for every $z_0\in\O$ and every $r>0$ with $r\leq r_1$ and ${Q}_{r}(z_0)\subset \O$, the following estimate holds
	\begin{equation}\label{harnack}
		\sup_{D_r(z_0)} u \leq C_P \, u(z_0),
	\end{equation}
	for every $u\geq0$ classical solution to $\H u =0$ in $\O$, where
	\begin{equation}\label{eq:def-D_r}
		D_r(z_0):=B_{\vartheta_1r}(x_0)\times \Big\{t= t_0 - \e_1 r^2\Big\}.
	\end{equation}	
\end{proposition}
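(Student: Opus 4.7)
The plan is to combine two applications of the improved mean value formula of Proposition \ref{prop-meanvalue}: one centered at the apex $\z=z_0$ to bound $u(z_0)$ from below by a weighted integral of $u$ over an intermediate cylinder $V$, and one centered at an arbitrary point $z^*\in D_r(z_0)$ to bound $u(z^*)$ from above by the same integral. As a preliminary step, the exponential time change $u\mapsto e^{\l t}u$ noted in the remark after Theorem \ref{Th-Harnack} allows us to assume without loss of generality that $\div_\GG b-c\ge 0$. Evaluating \eqref{e-meanvalue} at $\z=z_0$ with $f\equiv 0$, using $u\ge 0$ together with the non-negativity of the kernels $M^{(m)}_\varrho$, $W^{(m)}_\rho$, $N^m_\rho$, and discarding the non-negative contribution of the $\div_\GG b-c$ term, one obtains
\begin{equation*}
u(z_0)\ge \frac{1}{\varrho}\int_{\O^{(m)}_\varrho(z_0)} M^{(m)}_\varrho(z_0,z)\,u(z)\,dz,
\end{equation*}
for any mean value parameter $\varrho$, which I would choose as a function of the parabolic radius $r$ so that $\O^{(m)}_\varrho(z_0)$ contains a cylinder of the form $V=B_{\tilde\vartheta r}(x_0)\times[t_0-\mu_2 r^2,t_0-\mu_1 r^2]$, with $\tilde\vartheta>\vartheta_1$ and $0<\mu_1<\mu_2<1$ to be fixed.

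The core analytic step is then to establish two-sided pointwise estimates for $M^{(m)}_\varrho(z_0,z)$ on $V$ and, for a suitable auxiliary parameter $\varrho^*=\theta\varrho$, for $M^{(m)}_{\varrho^*}(z^*,z)$ on $\O^{(m)}_{\varrho^*}(z^*)$, with $\theta=\theta(\vartheta_1,\tilde\vartheta,\mu_1,\mu_2)\in(0,1)$ chosen small enough to guarantee $\O^{(m)}_{\varrho^*}(z^*)\subset V$ for every $z^*\in D_r(z_0)$. The lower bound follows from the elementary inequality $M^{(m)}_\varrho\ge c\,N_\varrho^{m+2}/(\t-t)^2$, which is valid because $M_\GG\ge 0$ by ellipticity, combined with the Gaussian lower bound on $\G$ provided by item \ref{item8}.\ of Theorem \ref{Th-Fundamental}; the matching upper bound requires in addition a pointwise upper estimate on $|\nabla_\GG\G|^2/\G^2$, which is obtained by combining item \ref{item2}.\ with item \ref{item8}. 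Integrating these bounds, and absorbing the $\div_\GG b-c$ contribution in the second application by taking $r_1$ small, I would obtain
\begin{equation*}
u(z_0)\ge \frac{c_2}{|V|}\int_V u(z)\,dz \quad\text{and}\quad u(z^*)\le \frac{c_3}{|V|}\int_V u(z)\,dz,
\end{equation*}
for every $z^*\in D_r(z_0)$; dividing the two inequalities and taking the supremum over $z^*$ produces \eqref{harnack} with $C_P:=c_3/c_2$.

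The principal obstacle is the derivation of the pointwise bounds on $M^{(m)}_\varrho$, which depend sensitively on the ratio $|\nabla_\GG\G|^2/\G^2$ and for which no closed-form expression of $\G$ is available in the present variable-coefficient setting. This is overcome via the Rothschild--Stein lifting \eqref{lifting} of $\GG$ to a free Carnot group $\tt\GG$ together with the equivalence of sublaplacians from \cite{bonfiglioli2004families}: the diffeomorphism $T_A$ reduces the lifted frozen fundamental solution $\tilde\G_A$ to the canonical heat kernel $\tilde\G_0$, whose gradient estimates are sufficiently explicit to supply the needed bounds uniformly in the ellipticity class $\textbf{M}_\l$. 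These bounds then descend to $\GG$ via the projection $\pi$, and the parametrix decomposition $\G=Z+J$ of \eqref{G=Z+J} transfers them from the frozen operator $\h_{A_\z}$ to the full variable-coefficient operator $\H$, closing the argument.
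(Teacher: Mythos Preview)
Your overall architecture matches the paper's: two applications of the improved mean value formula, kernel bounds via Hadamard descent, and the lifting to a free group combined with the diffeomorphism $T_A$ from \cite{bonfiglioli2004families} to control the gradient-to-function ratio. However, there is a genuine gap in the second paragraph that you only partially repair in the third.

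You write that the upper bound on $|\nabla_\GG\G|^2/\G^2$ ``is obtained by combining item \ref{item2}.\ with item \ref{item8}.'' This fails. Dividing the Gaussian upper bound \eqref{Gaussian-estimates-1} by the Gaussian lower bound \eqref{Gaussian-estimates-lower} produces a factor $\exp\big((c_l-c_u)\,d_X(x,\xi)^2/(\t-t)\big)$, and since in general $c_l>c_u$, this blows up exponentially on the relevant super-level sets rather than giving the needed $O\big((\t-t)^{-1}\big)$ control. The paper has to work much harder here: it first establishes a \emph{local equivalence} $(1-\eta)\tt Z\le\tt\G\le(1+\eta)\tt Z$ between the lifted fundamental solution and its parametrix on sets $\{\tt Z\ge K\}$ (Proposition~\ref{Prop-closetoparametrix}), via a perturbative splitting of the parametrix construction and the comparison Lemmas~\ref{sipuofaredipiu}--\ref{menominorepiu}; only then can the sharp derivative estimate of Lemma~\ref{derivatives-constant-operator} for the \emph{frozen} kernel be transferred to $\tt\G$ (Proposition~\ref{Bound-derivative}), yielding $|\tt X_i\tt\G|\le c\big(d_{\tt\GG}/\sqrt{\t-t}+1\big)\tt\G$. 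Your last paragraph gestures at this mechanism, but the earlier claim that items \ref{item2}.\ and \ref{item8}.\ suffice is simply wrong and should be deleted.

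A second, smaller issue: you propose to ``descend'' the kernel bounds from $\tt\GG$ to $\GG$ via $\pi$ and then run the mean value argument on $\GG$. The paper does \emph{not} do this, because there is no direct relation between $\tt\G$ and $\G$ that would allow the sharp gradient ratio bound to survive projection. Instead, the paper proves the full parabolic Harnack inequality \emph{on the lifted group} (Proposition~\ref{lifted-harnack}), and only at the very end observes that any solution $u$ on $\GG$ lifts to a solution $\tt u(\tt x,x,t)=u(x,t)$ on $\tt\GG$, so that the lifted Harnack applied to $\tt u$ immediately gives \eqref{harnack}. Your proposed order of operations (bounds on $\tt\GG$ $\to$ bounds on $\GG$ $\to$ Harnack on $\GG$) would require an additional argument that is not supplied. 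Finally, the paper handles the $\div_\GG b-c$ term not by a single exponential change but by introducing \emph{two} auxiliary functions $\overline u=e^{k(t-t_0)}u$ and $\underline u=e^{-k(t-t_0)}u$, using one for the upper-bound application and the other for the lower-bound application; your ``absorb by taking $r_1$ small'' is plausible but would need justification, since the offending term in \eqref{e-meanvalue} carries the non-integrable weight $1/\rho$.
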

\begin{figure}\label{figure-harnack-sets}
	\centering
	
	\begin{tikzpicture}[scale =1]
		
		\draw [brickred] (-5,0.7) -- (5,0.7);
		\draw [brickred] (-5,-2.5) -- (5,-2.5);
		\draw [brickred] (-5,0.7) -- (-5,-2.5);
		\draw [brickred] (5,0.7) -- (5,-2.5) node[anchor=west, scale=1]
		{$\O$};
		
		\draw [black] (-3,0) -- (3,0);
		\draw [black] (-3,-2) -- (3,-2);
		\draw [black] (-3,0) -- (-3,-2);
		\draw [black] (3,0) -- (3,-2) node[anchor=west, scale=1]
		{$Q_r(z_0)$};
		
		\filldraw (0,0) circle (1pt) node[above=1pt, scale=1] {$z_0$};
		
		\draw [forestgreen(web), thick] (-1,-1) -- (1,-1)
		node [anchor=east, xshift=-52pt, yshift=7pt] {\scalebox{1} {$D_r(z_0)$}};
	\end{tikzpicture}
	\caption{The sets appearing in the statement of Proposition \ref{Parabolic-Harnack} (the time variable is represented vertically, and upwardly increasing).}
\end{figure}
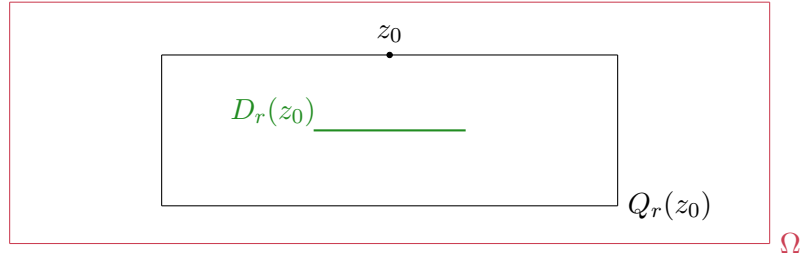
We first describe the strategy of the proof, which relies on the improved mean value formula derived in Proposition \ref{prop-meanvalue}. We begin by discussing a similar strategy employed for uniformly parabolic operators with Hölder continuous coefficients. Then we highlight the main difficulties arising in our framework and outline how we overcome them.

In the uniformly parabolic setting, a key properties of \eqref{e-meanvalue} is the boundedness of the kernel $M_r^{(m)}$: more precisely, there exist $m^-,m^+>0$ and $r_0,\e_0\in(0,1)$ such that
\begin{equation}\label{isola-delle-correnti}
	\begin{aligned}
		M_{r}^{(m)} \big(\x,\t,x,t\big) &\leq m^+r^{\frac{m-2}{m+N}},\qquad   &\text{for every } \, (x,t)\in \O^{(m)}_{r}\big(\x, \t\big),\\ M_{5r}^{(m)} \big(x_0,t_0,\x,\t\big) &\geq m^- r^m,\qquad &\text{for every } \,\big(\x,\t\big)\in  \O^{(m)}_{4r}(x_0, t_0)\cap \Big\{\t\leq t_0 - \e\,r^2\Big\},
	\end{aligned}
\end{equation}
whenever $r \in (0,r_0)$ and $\e \in (0, \e_0)$. In the constant coefficient case, \eqref{isola-delle-correnti} follows from the explicit expression of the corresponding fundamental solution. For instance, in the case of the heat equation in $\mathbb{R} \times (0,T)$, one has, for every $r> 0$ and for all $(\xi, \tau), (x, t) \in \mathbb{R} \times (0,T)$,
\begin{equation}
	\begin{aligned}
		M^{(m)}_r(\x,\t,x,t)&=\frac{2^m m \omega_m}{(m+2)(\t-t)}\Bigg((\t-t)\ln\Bigg(\frac{r}{(4\pi(\t-t))^\frac{m+1}{2}}\Bigg)-\frac{(\x-x)^2}{4}\Bigg)^\frac{m}{2}\cdot\\
		&\qquad \qquad \qquad \qquad\qquad \qquad \qquad \qquad\cdot\Bigg(\ln\Bigg(\frac{r}{(4\pi(\t-t))^\frac{m+1}{2}}\Bigg)-\frac{(\x-x)^2}{2m(\t-t)}\Bigg),
	\end{aligned}
\end{equation}
and
\begin{equation}
	\O^{(m)}_r(\x,\t)=\Bigg\{(\x-x)^2< 4(\t-t)\ln\Bigg(\frac{r}{(4\pi(\t-t))^\frac{m+1}{2}}\Bigg)\Bigg\}.
\end{equation}
It is straightforward to verify the validity of \eqref{isola-delle-correnti} when $m>2$. We now define the set
\begin{equation}
	K_{r,\e	}(x_0,t_0):=\overline{\O^{(m)}_r(x_0,t_0)}\cap \Big\{t\leq t_0 - \e\, r^2\Big\},
\end{equation}
for some $r \in (0,r_0)$ and $\e \in (0, \e_0)$. Then, thanks again to the explicit expression of the fundamental solution, it is easy to show that there exist constants $\vartheta, r_0,\e_0 \in (0,1)$ such that the following assertion holds 
\begin{equation}\label{eq:inclusione-pesante}
\O^{(m)}_{\vartheta r}(z) \subset \O^{(m)}_{4 r}(z_0) \cap \Big\{t\leq t_0 - \e\, r^2\Big\}
\end{equation}
for every $r \in (0,r_0)$, $\e \in (0, \e_0)$ and $z \in K_{r,\e	}(z_0)$. Then, the parabolic Harnack inequality follows from an elementary argument similar to the one employed for harmonic functions: let $\big(\x,\t\big)$ be a point in the set $K_{r,\e	}(x_0,t_0)$, for some $r \in (0,r_0)$ and $\e \in (0, \e_0)$, then there exists $\vartheta\in(0,1)$ such that
\begin{equation}
	\begin{aligned}
		u\big(\x,\t\big)\overset{\eqref{e-meanvalue}}=&\frac{1}{\vartheta r } \int_{\Omega_{\vartheta r}^{(m)}(\x,\t)} \tt M_{\vartheta r}^{(m)} \big(\x,\t,x,t\big) u(x,t) \, dx dt \\
		\overset{\eqref{isola-delle-correnti}}\leq\, &\frac{m^+{(\vartheta r)}^{\frac{m-2}{m+N}}}{\vartheta r} \int_{\Omega_{\vartheta r}^{(m)}(\x,\t)} u(x,t) \, dx dt\\
		\overset{\eqref{eq:inclusione-pesante}}\leq\,&\frac{m^+ {(\vartheta r)}^{\frac{m-2}{m+N}}}{\vartheta r} \int_{\O^{(m)}_{4 r}(x_0, t_0)\cap \left\{\t\leq t_0 - \e\,r^2\right\}} \!\!\!\!\!\!\!\!\!\!\!\!\!\!\!\!\!\!\!\!\!\!\!\!\!\!\!\!\!\! u(x,t) \, dx dt\\
		\overset{\eqref{isola-delle-correnti}}\leq\, &\frac{5m^+{(\vartheta r)}^{\frac{m-2}{m+N}}}{\vartheta m^-r^m }\frac{1}{5 r } \int_{\O^{(m)}_{5r }(x_0, t_0)}\!\!\!\!\!\!\!\!\!\!\!\!\!\!\! M_{5r}^{(m)} (x_0,t_0,x,t) u(x,t) \, dx dt \\
		\overset{\eqref{e-meanvalue}}\leq &\frac{5m^+ r_0^{\frac{m-2}{m+N}-m}}{m^-\vartheta^{\frac{2+N}{m+N}} } u(x_0,t_0).
	\end{aligned}
\end{equation}
The Harnack inequality \eqref{harnack} follows from the inclusion $D_r(z_0)\subseteq K_{5r,\e}^{(m)}(x_0, t_0)\subseteq  Q_{r}(z_0)$.

\medskip

In the case of H\"older continuous coefficients, the explicit expression of the fundamental solution $\G$ is no longer available. As explained above, the existence of $\Gamma$ is proved employing the parametrix method outlined in Section \ref{Fundamental}. A key feature of this method is that $\G$ can be written as the sum of the parametrix $Z$ and a perturbative term (see \eqref{G=Z+J}). Since the parametrix $Z$ is the fundamental solution  of an operator with constant coefficients, we have its explicit expression. Consequently, if one can show that $\Gamma$ is locally equivalent to $Z$, then the desired bounds for the kernel, equation \eqref{isola-delle-correnti}, follow as in the constant coefficients case. The proof of this local equivalence relies on a perturbative argument originally introduced by Polidoro in \cite{polidoro-parametrix} and recently employed in \cite{rebucci24}. This technique relies on a quantitative dependence of the fundamental solution on the principal part matrix $A$, which can be obtained thanks to the explicit form of the parametrix $Z$. More precisely, let $A^-, A^+$ be two symmetric positive definite matrices satisfying
\begin{equation}
	\frac{1}{\Lambda} \,\mathbb{I}_{N}\leq A^-,A^+\leq \Lambda \, \mathbb{I}_{N},\qquad \text{and} \qquad \mathbb{O}<A^+-A^-<\mu \, \mathbb{I}_{N},
\end{equation}
for some $\Lambda,\mu>0$. Here, $\mathbb{I}_{N}$ denotes the $N$-dimensional unit matrix and $\mathbb{O}$ the zero matrix. Denote by $\G_{A^{-}}$ and $\G_{A^{+}}$ the fundamental solutions associated to the constant coefficients operators
\begin{equation}
	\mathscr H^-=\div (A^-\nabla)-\p_t,\qquad \text{and}\qquad \mathscr H^+=\div (A^+\nabla)-\p_t.
\end{equation}
Then, for every $K>0$  the following estimates hold 
\begin{equation}\label{double-estimates}
	\begin{cases}
		\G_{A^-}(z,\z)\leq c_1\G_{A^+}(z,\z),\qquad &\forall z,\z \in \rnn,\\
		\G_{A^+}(z,\z)\leq c_2\G_{A^-}(z,\z)(t-\t)^{-\frac{Q}{2}c_3\mu},\qquad &\forall z,\z : \G_{A^+}(z,\z)\geq K,
	\end{cases}
\end{equation}
where $c_1$ and $c_3$ are structural constants, while $c_2$ depends also on $K$. 

\medskip

In order to apply Polidoro's perturbative method in our subelliptic framework, we need to prove estimates of the form \eqref{double-estimates} for the fundamental solution $\Gamma_A$ of the constant coefficients operator
\begin{equation}\label{boh-non-so-come-chiamarlo}
	{\mathscr{H}}_A=\sum_{i,j=1}^{m_1}a_{ij}X_iX_j-\p_t,
\end{equation}
for $A \in \textbf{M}_\l$. We  prove a version of the estimates in \eqref{double-estimates} by employing the results in \cite{bonfiglioli2004families, bonfiglioli2002uniform}, provided that $\GG$ is free. The key difficulty in the proof of \eqref{double-estimates} lies in the lack of an explicit expression for the fundamental solution of \eqref{boh-non-so-come-chiamarlo}, which prevents a direct derivation of the kernel bounds. However, when $\GG$ is a free Carnot group, this obstacle is partially overcome thanks to a result in \cite{bonfiglioli2004families}, which ensures the existence of a Lie group automorphism $T_A$ that turns $\mathscr H_A$ into the canonical heat operator $\Delta_\GG -\p_t$. More precisely, the fundamental solution $\G_A$ satisfies
\begin{equation}\label{automor}
	\G_A(z,\z)= \G_A\Big(\x^{-1}\circ x,t-\t\Big)=J_A\, \G_0\Big(T_A\big(\x^{-1}\circ x	\big),t-\t\Big),
\end{equation}
where $\G_0$ is the fundamental solution of the canonical heat operator, and $J_A = \big|\det \mathcal{J}_{T_{A}}(x)\big|$, with $\mathcal{J}_{T_A}$ denoting the Jacobian matrix of $T_A$. By exploiting the properties of the automorphism $T_A$, we are able to derive the second estimate in \eqref{double-estimates} when dealing with a free Carnot group. Regarding the first estimate in \eqref{double-estimates}, we are able to prove it in a weaker setting, namely when $A^-$ and $A^+$ are scalar multiples of the identity matrix (see Lemma \ref{sipuofaredipiu} below). However,  combining Lemma \ref{sipuofaredipiu} with the aforementioned automorphism, we are able to apply Polidoro's method and establish the local comparison between $\G$ and its parametrix, Proposition \ref{Prop-closetoparametrix}, in the context of a free Carnot group.

\medskip

To summarize, we have described the strategy to prove the parabolic Harnack inequality whenever $\GG$ is free. To address the general case, \emph{i.e.}, when $\GG$ is not assumed to be free, we adopt the following approach: we \emph{lift} $\GG$ to a free Carnot group $\tilde{\GG}$ (as explained in Section \ref{Carnot}); then, the Harnack inequality is obtained for the corresponding lifted operator $\HG$; finally, since every classical solution to $\H u \geq 0$ solves also $\HG u \geq 0$, the Harnack inequality obtained in the lifted group $\tt\GG$ directly implies the validity of Proposition \ref{Parabolic-Harnack} in the original group $\GG$.

\medskip

Let $\tt \GG$ be the \emph{free} Carnot group introduced in Section \ref{Carnot}, which lifts $\GG$, and let $\big\{\tt X_i\big\}_{i=1}^{m_1}$ be the corresponding set of vector fields. The lifted operator  associated to \eqref{HG1} is
\begin{equation}\label{HGlifted}
	\HG \tilde u(\tt z):=\sum_{i,j=1}^{m_1} \tilde X_i\Big(a_{ij}(z)\tilde X_j\Big)\tilde u(\tt z) + \sum_{i=1}^{m_1} b_i(z)\tilde X_i\tilde u(\tt z)+ c(z)\tilde u(\tt z)-\p_t \tilde u(\tt z),\quad \tt z\in \tilde{\O},
\end{equation}
where $a_{ij},b_i$ and $c$ are the coefficients appearing in \eqref{HG1}, assumed to satisfy \textbf{(H4)}. Here, $\tt z$ denote the point $(\tt x,x,t)\in \R^{\tt N}\ti \rn \ti \R$, and we have set $\tilde \O:= \rnm \ti \O$. Throughout the rest of this section, we will use the letters $\tt z_0$ and $\tt \z$ to denote respectively the points $(\tt x_0,x_0 ,t_0)$ and $ (\tt \x, \x,\t)$ in $\tilde \O$, and $z, z_0$ and $\z$ to denote respectively the points $(x ,t),(x_0 ,t_0)$ and $ (\x,\t)$ in $\O$.

Notice that the coefficients $a_{ij}, b_i$ and $c$ are bounded and $\a$-H\"older continuous with respect to the $\tt{\GG}$-parabolic metric induced by the distance $d_{\tt\GG}$ defined in \eqref{tilde-distance}. Hence, Theorem \ref{Th-Fundamental} applies to $\HG$, ensuing the existence of a fundamental solution $\tt\G$ to \eqref{HGlifted}, and Proposition \ref{prop-meanvalue}  yields the following representation formula for any $\tt u$ classical solution to $\HG \tt u = 0$ 
\begin{equation} \label{tilde-meanvalue} 
	\begin{aligned}
		\tt u(\tt \z) =& \frac{1}{r} \int_{\tt\Omega_r^{(m)}(\tt \z)} \tt M_r^{(m)} (\tt \z,\tt z) \tt u(\tt z) \, d\tt z\\
		+&\frac{1}{r} \int_0^{r} \frac{\omega_m}{\r} \Biggl(
		\int_{\tt \Omega^{(m)}_\r (\tt \z)}\tt N_\r^m(\tt \z,\tt z)\, \big( \div_\GG b(z) - c(z) \big) \,\tt u(\tt z) \, d\tt z   \Biggr) \, d \r.
	\end{aligned}
\end{equation} 
Here, $\tt\Omega_r^{(m)}$, $\tt M_r^{(m)}$, and $\tt N_r^{(m)}$  are the lifted analogues of the quantities introduced in \eqref{Eq:kernels-m}, in the context of $\tt \GG$.

The following observation will be crucial at the end of the section: if $u$ is a classical solution to $\H u =0$ in $\O$, then by the lifting property \eqref{lifting} we infer that the function $(\tt x,x,t)\to \tilde u(\tilde{x},x,t):=u(x,t)$ is a classical solution to $\HG \tilde u = 0$ in $\tilde \O$.

As previously mentioned, our goal is to establish bounds of the kind \eqref{isola-delle-correnti} for the kernel $\tt M_r^{(m)}$ in terms of $r$. To this end, we begin by deriving some estimates for the fundamental solution of the constant coefficients lifted operator
\begin{equation}\label{mazzareddi}
	\tt{\mathscr{H}}_A=\sum_{i,j=1}^{m_1}a_{ij}\tt X_i\tt X_j-\p_t,
\end{equation} 
focusing on its dependence on the coefficients matrix $A \in \textbf{M}_\l$.
\begin{lemma}\label{sipuofaredipiu}
	Assume that \textbf{(H1)},\textbf{(H2)} and \textbf{(H3)} hold, and pick $0<\mu^-< \m^+$. Denote by $\tt\G_{A^{-}}$ and $\tt\G_{A^{+}}$ the fundamental solutions of \eqref{mazzareddi} corresponding to the matrices $A^{-}:=\mu^- \mathbb{I}_{m_1}$ and $A^{+}:=\mu^+ \mathbb{I}_{m_1}$, respectively. Then, there exists $c_1>0$ such that the following inequality holds
	\begin{equation}\label{-minore+}
		\tt\G_{A^{-}}(\tt x, x,t) \leq c_1\, \tt\G_{A^{+}} (\tilde x,x,t),\qquad \forall(\tilde x,x,t) \in \rnm\times \rn \times (0,+\infty).
	\end{equation} 
\end{lemma}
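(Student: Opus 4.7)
The plan is to reduce the inequality to a pure time-comparison for the canonical heat kernel on $\tilde\GG$, and then invoke the Gaussian bounds of Theorem~\ref{Th-Fundamental}. Since $A^\pm = \mu^\pm \mathbb{I}_{m_1}$, the operator in (\ref{mazzareddi}) reduces to $\tilde{\mathscr H}_{A^\pm} = \mu^\pm \Delta_{\tilde\GG} - \partial_t$. A direct time-rescaling argument (substituting $t\mapsto\mu t$ into a solution of the canonical heat equation and verifying that the delta-mass initial datum is preserved as $t\to 0^+$) yields the explicit identity
\begin{equation*}
\tilde\Gamma_{A^\pm}(\tilde x,x,t)=\tilde\Gamma_0(\tilde x,x,\mu^\pm t),
\end{equation*}
where $\tilde\Gamma_0$ denotes the fundamental solution of the canonical heat operator $\Delta_{\tilde\GG}-\partial_t$ on $\tilde\GG$. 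Consequently, (\ref{-minore+}) becomes the pure time-comparison statement
\begin{equation*}
\tilde\Gamma_0(\tilde x,x,\mu^- t)\;\leq\;c_1\,\tilde\Gamma_0(\tilde x,x,\mu^+ t),\qquad\forall\,(\tilde x,x,t)\in\mathbb{R}^{\tilde N}\times\mathbb{R}^N\times(0,+\infty).
\end{equation*}

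I then apply the Gaussian upper bound (\ref{Gaussian-estimates}) and the lower bound (\ref{Gaussian-estimates-lower}) of Theorem~\ref{Th-Fundamental} to $\tilde\Gamma_0$, which is admissible since $\Delta_{\tilde\GG}-\partial_t$ is a constant-coefficient instance of (\ref{HG1}) on $\tilde\GG$ and hence trivially satisfies \textbf{(H4)}. Since the desired inequality is scale invariant under the dilations $\tilde\delta_r$, one may restrict $t$ to any fixed bounded interval, so that the Gaussian bounds apply with $T$-independent constants. Writing $d:=d_{\tilde\GG}((\tilde x,x),0)$ for the homogeneous distance on $\tilde\GG$, combining the two estimates gives
\begin{equation*}
\frac{\tilde\Gamma_0(\tilde x,x,\mu^- t)}{\tilde\Gamma_0(\tilde x,x,\mu^+ t)}\;\leq\;C\left(\frac{\mu^+}{\mu^-}\right)^{\!\tilde Q/2}\!\exp\!\left(\frac{d^{2}}{t}\left(\frac{c_l}{\mu^+}-\frac{c_u}{\mu^-}\right)\right),
\end{equation*}
for structural constants $C,c_u,c_l>0$ depending only on $\tilde\GG$. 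The right-hand side is uniformly bounded in $(\tilde x,x,t)$ precisely when the exponent is non-positive, i.e.\ when $c_u\mu^+\geq c_l\mu^-$.

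The principal obstacle is that the Gaussian constants $c_u$ and $c_l$ provided by Theorem~\ref{Th-Fundamental} may differ (typically $c_u\leq c_l$), so the sign condition above is not automatic from the generic bounds alone when the ratio $\mu^+/\mu^-$ is close to $1$. To overcome this, I will exploit the very specific structure of $\tilde\Gamma_0$: as the fundamental solution of the \emph{canonical} sub-Laplacian on a stratified group, $\tilde\Gamma_0$ enjoys sharp two-sided Gaussian bounds with a matching exponential constant, a classical fact that follows from the uniform estimates developed in \cite{bonfiglioli2002uniform, bonfiglioli2004families}. Once $c_u=c_l=1/c$ can be taken, the exponent above reduces to $(d^{2}/(ct))(1/\mu^+-1/\mu^-)\leq 0$ by the hypothesis $\mu^-<\mu^+$, and the claim follows with $c_1=C^{2}(\mu^+/\mu^-)^{\tilde Q/2}$. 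Alternatively, a self-contained derivation can be achieved by exploiting the explicit dilation symmetry of the canonical heat kernel together with a Harnack-type pointwise comparison between $\tilde\Gamma_0(\tilde\delta_\lambda\eta,1)$ and $\tilde\Gamma_0(\eta,1)$ for $\lambda>1$, which captures the same decisive fact that $\tilde\Gamma_0(\cdot,1)$ decays Gaussian-fast along dilation rays emanating from the origin.
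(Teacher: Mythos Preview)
Your reduction to the canonical heat kernel via the time-rescaling identity $\tilde\Gamma_{A^\pm}(\tilde x,x,t)=\tilde\Gamma_0(\tilde x,x,\mu^\pm t)$ is exactly the paper's ``homogeneity argument'' (the paper says nothing more than that). So the overall route is the same; the question is how you close the comparison $\tilde\Gamma_0(\,\cdot\,,s)\le c_1\,\tilde\Gamma_0(\,\cdot\,,\rho s)$ for $\rho=\mu^+/\mu^->1$.

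The gap is your step~4. The assertion that the canonical heat kernel on a stratified group satisfies two-sided Gaussian bounds with \emph{the same} exponential constant is not what \cite{bonfiglioli2002uniform,bonfiglioli2004families} prove, and it is in fact false already on the Heisenberg group (Gaveau's formula shows $\tilde\Gamma_0(\,\cdot\,,1)$ is not comparable to $e^{-d(\cdot)^2/c}$ with a single $c$; there are polynomial corrections). With the actual bounds $c_u<c_l$ one only gets $\tilde\Gamma_0(\eta,s)\le C\,\tilde\Gamma_0(\eta,\rho s)$ when $\rho\ge c_l/c_u$, not for every $\rho>1$, so the argument does not close as written.

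Your ``alternative'' is the right way out, but it needs to be made precise. One clean closure: by dilation invariance it suffices to show $\tilde\Gamma_0(\eta,1)\le C_{\rho_0}\,\tilde\Gamma_0(\eta,\rho_0)$ for a single fixed $\rho_0\in(1,\rho_\ast)$, and then iterate using $\tilde\Gamma_0(\eta,s)\le C_{\rho_0}\,\tilde\Gamma_0(\eta,\rho_0 s)$ finitely many times until $\rho_0^k\ge\rho$. For the base step, apply the invariant Harnack inequality for the (constant-coefficient, smooth) operator $\Delta_{\tilde\GG}-\partial_t$ on the cylinder $Q_r(\eta,\rho_0)$ with $r$ chosen so that $(\eta,1)\in Q_r^-(\eta,\rho_0)$ and $\rho_0-r^2>0$; since the pole $(0,0)$ lies on $\{t=0\}$, the cylinder automatically avoids it, and the estimate is uniform in $\eta$. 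This replaces the unjustified ``matching constants'' claim with a standard Harnack step.
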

\begin{proof}
	Since $\tt\G_{A^{-}}$ and $\tt\G_{A^{+}}$ are the fundamental solutions of 
	\begin{equation}
		\tt{\mathscr{H}}_{A^-}=\m^-\Delta_{\tt\GG}-\p_t,\qquad \tt{\mathscr{H}}_{A^+}=\m^+\Delta_{\tt\GG}-\p_t,
	\end{equation} 
	the thesis immediately follows from a homogeneity argument.
\end{proof}
\begin{lemma}\label{menominorepiu}
	Assume that \textbf{(H1)},\textbf{(H2)} and \textbf{(H3)} hold, and let $A^-,A^+$ be two positive definite, symmetric matrices satisfying 
	\begin{equation}\label{ellip-Lambda-pachino}
		\frac{1}{\Lambda}\,\mathbb{I}_{m_1}\leq A^-,A^+\leq \Lambda\,\mathbb{I}_{m_1},\qquad \text{and} \qquad \mathbb{O}<A^+-A^-<\mu \, \mathbb{I}_{m_1},
	\end{equation}
	for some $\Lambda,\m>0$. Denote by $\tt\G_{A^{-}}$ and $\tt\G_{A^{+}}$ the fundamental solution of \eqref{mazzareddi} corresponding to the matrices $A^{-}$ and $A^{+}$, respectively. Then, there exists $c_2,c_3>0$ such that for every $K>0$ it holds
	\begin{equation}\label{+minore-}
		\tt\G_{A^{+}}(\tilde x,x,t) \leq c_2 K^{-c_3\mu} t^{-\frac{\tt Q}{2} c_3 \mu}\, \tt\G_{A^{-}} (\tilde x,x,t),
	\end{equation}
	for every $(\tilde x,x,t) \in \rnm\times \rn \times (0,+\infty)$ such that $\tt\G_{A^{+}} (\tilde x,x,t)\geq K$. Here, we recall that $\tt Q$ denote the homogeneous dimension of the lifted group $\tt \GG$. 
\end{lemma}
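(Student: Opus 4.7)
The plan is to exploit the explicit formula \eqref{automor} for the fundamental solution of constant-coefficient sublaplacians on a free Carnot group. Since $\tilde{\GG}$ is free, applying \eqref{automor} to the lifted operators $\tilde{\mathscr H}_{A^\pm}$ in \eqref{mazzareddi} gives
\begin{equation*}
\tilde{\G}_{A^\pm}(\tt x, x, t) = J_{A^\pm}\, \tilde{\G}_0\big(T_{A^\pm}(\tt x, x), t\big),
\end{equation*}
where $\tilde{\G}_0$ is the canonical heat kernel on $\tilde{\GG}$. By the uniform estimates of \cite{bonfiglioli2004families}, both the Jacobian $J_A$ and the diffeomorphism $T_A$ enjoy bounds depending only on the ellipticity class $\Lambda$, together with a quantitative Lipschitz-type dependence of the map $A\mapsto T_A$.

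Next I would apply the Gaussian upper bound for $\tilde{\G}_0$ (of the type \eqref{gaussian-parametrix}, lifted to $\tilde{\GG}$) to the numerator and the Gaussian lower bound to the denominator, so as to obtain
\begin{equation*}
\frac{\tilde{\G}_{A^+}(\tt x, x, t)}{\tilde{\G}_{A^-}(\tt x, x, t)} \leq C\,\exp\!\left( \frac{C\, d_{\tt\GG}\!\big(T_{A^-}(\tt x,x),0\big)^2 - c\, d_{\tt\GG}\!\big(T_{A^+}(\tt x,x),0\big)^2}{t}\right),
\end{equation*}
with $C,c>0$ depending only on $\Lambda$ and the prefactor absorbing the bounded ratio $J_{A^+}/J_{A^-}$. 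The hypothesis $\mathbb{O} < A^+ - A^- < \mu\, \mathbb{I}_{m_1}$ in \eqref{ellip-Lambda-pachino}, combined with the Lipschitz dependence of $T_A$ on $A$, bounds the difference of the two squared CC-distances by $C'\mu\, d_{\tt\GG}(\tt x,x,0)^2$, so that the exponent collapses to $C'\mu\, d_{\tt\GG}(\tt x,x,0)^2/t$.

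The final step is to convert this pointwise exponential bound into the quantitative estimate \eqref{+minore-} by using the hypothesis $\tilde{\G}_{A^+}(\tt x, x, t) \geq K$. Indeed, inverting the Gaussian upper bound for $\tilde{\G}_{A^+}$ yields
\begin{equation*}
\frac{d_{\tt\GG}(\tt x, x, 0)^2}{t} \leq \frac{1}{c}\log\!\left( \frac{\bar C}{K\, t^{\tt Q/2}}\right),
\end{equation*}
and substituting this inequality into the previous exponential produces exactly the factor $c_2\, K^{-c_3 \mu}\, t^{-\frac{\tt Q}{2} c_3 \mu}$, with $c_3 = C'/c$.

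The main obstacle I anticipate is the $O(\mu)$ control on the difference $d_{\tt\GG}(T_{A^-}(\tt x,x), 0)^2 - d_{\tt\GG}(T_{A^+}(\tt x,x), 0)^2$: this requires a Lipschitz-type dependence of the Rothschild–Stein change of basis $T_A$ on the matrix $A \in \textbf{M}_\l$, strong enough to translate into a linear-in-$\mu$ estimate for the pulled-back squared CC-distances. This is precisely the content of the uniform construction in \cite{bonfiglioli2004families}, and explains why the freeness of $\tilde{\GG}$ (accessed via the lifting procedure of Section \ref{Carnot}) cannot be dispensed with at this stage.
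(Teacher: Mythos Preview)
Your overall plan is the same as the paper's---use the exact change-of-variables formula $\tilde\Gamma_{A^\pm}(\tilde x,x,t)=J_{A^\pm}\,\tilde\Gamma_0\big(T_{A^\pm}(\tilde x,x),t\big)$ from \cite{bonfiglioli2004families}, then exploit the quantitative closeness of $T_{A^+}$ and $T_{A^-}$ together with the constraint $\tilde\Gamma_{A^+}\geq K$---but there is a genuine gap in the middle step.

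The issue is your use of \emph{separate} Gaussian upper and lower bounds for $\tilde\Gamma_0$. This produces an exponent of the form
\[
C\,d_{\tilde\GG}\big(T_{A^-}(\tilde x,x),0\big)^2 \;-\; c\,d_{\tilde\GG}\big(T_{A^+}(\tilde x,x),0\big)^2
\]
with $C>c$ strictly, and that quantity does \emph{not} collapse to $O(\mu)\cdot d_{\tilde\GG}(\tilde x,x,0)^2$. Writing it as $c\big(d(T_{A^-})^2-d(T_{A^+})^2\big)+(C-c)\,d(T_{A^-})^2$, the first term is indeed small when $T_{A^+}\approx T_{A^-}$, but the second term is of size $(C-c)\,d_{\tilde\GG}(\tilde x,x,0)^2$ with $C-c>0$ a fixed constant depending only on $\Lambda$. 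After substituting the bound coming from $\tilde\Gamma_{A^+}\geq K$ you would obtain an exponent $-(c_3\mu+c_4)$ with $c_4>0$ independent of $\mu$, which is too weak: in the application (Proposition~\ref{Prop-closetoparametrix}) one needs to send the exponent to zero by taking $\mu$ small, and the spurious $c_4$ prevents this.

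The paper avoids this by never splitting into upper and lower Gaussians. It keeps the \emph{exact} ratio
\[
\frac{\tilde\Gamma_{A^+}(\tilde x,x,t)}{\tilde\Gamma_{A^-}(\tilde x,x,t)}
=\frac{J_{A^+}}{J_{A^-}}\cdot
\frac{\tilde\Gamma_0\big(T_{A^+}(\tilde x,x),t\big)}{\tilde\Gamma_0\big(T_{A^-}(\tilde x,x),t\big)},
\]
so that one compares the \emph{same} kernel $\tilde\Gamma_0$ at two nearby points $T_{A^\pm}(\tilde x,x)$. The closeness of these points (third estimate in \eqref{two-ineq}, which is actually H\"older of order $1/\kappa$ in $\|A^+-A^-\|$, not Lipschitz) then controls the ratio directly, and only at the very end does the bound $\tilde\Gamma_{A^+}\geq K$ enter to convert $d_0(\tilde x,x)^2/t$ into $\log\big(K^{-1}t^{-\tilde Q/2}\big)$. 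To salvage your argument, you must likewise work with the exact ratio of $\tilde\Gamma_0$-values rather than sandwiching each factor between Gaussians with mismatched constants.
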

\begin{proof}
	Since $\tt\GG$ is free, Theorem 1.1 in \cite{bonfiglioli2004families} yields the existence of two Lie group automorphisms $T_{A^{-}}$ and $T_{A^{+}}$ such that 
	\begin{equation}\label{diffeormophic-sublapalcians}
		\begin{aligned}
			\tt\G_{A^{-}}(\tilde x,x,t)= J_{A^-}\,\tt\G_0\big(T_{A^-}(\tt x,x),t)\big) = J_{A^-}\,\frac{\exp\Bigg(-\frac{ d_0 \big(T_{A^-}(\tt x,x)\big)^2}{t}\Bigg)}{t^{\frac{\tilde{Q}}{2}}},\\
			\tt\G_{A^{+}}(\tilde x,x,t)= J_{A^+}\,\tt\G_0\big(T_{A^+}(\tt x,x),t)\big) = J_{A^+}\, \frac{\exp\Bigg(-\frac{ d_0 \big(T_{A^+}(\tt x,x)\big)^2}{t}\Bigg)}{t^{\frac{\tilde{Q}}{2}}},
		\end{aligned}
	\end{equation} 
	where $\tt\G_0$ denotes the fundamental solution of the canonical heat operator $\sum_i\tt X_i^2 -\p_t$, $d_0$ is the corresponding homogeneous distance on $\tt\GG$, and $J_A = \det \mathcal{J}_{T_{A}}(\tt x,x)$, with $\mathcal{J}_{T_A}$ denoting the Jacobian matrix of $T_A$.  By Theorem 2.7 in \cite{bonfiglioli2004families} we infer that $J_A$ is constant, and that there exists $c_\Lambda>0$ (only depending on the ellipticity constant $\Lambda$) such that 
	\begin{equation}\label{two-ineq}
		\begin{aligned}
			&\qquad \qquad \qquad\;\; c_\Lambda^{-1} \leq J_{A^-}, J_{A^+} \leq c_\Lambda, \\
			&c_\Lambda^{-1}  d_0(\tt x,x) \leq d_0\big(T_{A^-}(\tt x,x)\big),d_0\big(T_{A^+}(\tt x,x)\big) \leq c_\Lambda d_0(\tt x,x), \\
			& d_0\big((T_{A^-}(\tt x,x))^{-1} \,\tt\circ\, T_{A^+}(\tt x,x)\big) \leq c_\Lambda \|A^+ - A^-\|^{\frac{1}{\kappa}} d_0(\tt x,x),
		\end{aligned}
	\end{equation}
	for any $(\tt x,x)\in\rnm\ti\rn$, where we recall that $\tt \circ$ is the composition law of $\tt \GG$, and $\kappa$ its step. From the second inequality in \eqref{two-ineq}, together with the assumption $\tt\G_{A^{+}} (\tilde x,x,t)\geq K$, we infer
	\begin{equation}\label{K-implies}
		\frac{ d_0 (\tt x,x)^2}{t}\leq c_\Lambda \ln \left(\frac{1}{t^{\frac{\tt Q}{2}}K}\right).
	\end{equation}
	The desired estimate \eqref{+minore-} then follows from \eqref{diffeormophic-sublapalcians}, \eqref{two-ineq}, and \eqref{K-implies}.
\end{proof}
\begin{remark}
	When dealing with uniformly parabolic operators, i.e. in the Euclidean setting, the estimate given by Lemma \ref{sipuofaredipiu} is known to hold even if $A^-$ and $A^+$ are more general matrices verifying \eqref{ellip-Lambda-pachino}. The same estimate has also been established for a broad class of degenerate Kolmogorov operators (see for instance \cite{polidoro-parametrix,rebucci24}). Moreover, some precise estimates obtained for the heat kernel in the Heisenberg group in \cite{lanconelli1,lanconelli2} seem to suggest the validity of such estimates for Carnot groups in general. A precise characterization of this property and its general validity remains an interesting open problem. 
\end{remark}
\begin{lemma}\label{derivatives-constant-operator}
	Assume that \textbf{(H1)},\textbf{(H2)} and \textbf{(H3)} hold, let $\frac{1}{\Lambda} \, \mathbb{I}_{m_1}< A < \Lambda \, \mathbb{I}_{m_1}$ be a symmetric positive matrix, and let $\tt\G_A$ be the fundamental solution of \eqref{mazzareddi} with matrix $A$. Then, there exists $c>0$ (depending only on structural assumptions and $\Lambda$) such that
	\begin{equation}\label{pachino}
		|X_i \tt\G_A (\tt x,x,t)|\leq c \left(\frac{1}{\sqrt{t}} \frac{d_{\tt \GG}(\tt x,x)}{\sqrt{t}}\right)\tt \G_A(\tt x,x,t),\qquad \forall (\tt x, x,t) \in \rnm\ti \rn\ti(0,+\infty).
	\end{equation}
\end{lemma}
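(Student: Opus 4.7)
The strategy is to exploit the explicit representation of $\tt\G_A$ given in \eqref{diffeormophic-sublapalcians}, namely
\[
\tt\G_A(\tt x, x, t) = J_A \, \tt\G_0\bigl(T_A(\tt x, x), t\bigr),
\]
where $\tt\G_0$ is the canonical heat kernel on the free Carnot group $\tt\GG$ and $T_A$ is the Lie group automorphism provided by Theorem 1.1 in \cite{bonfiglioli2004families}. The estimate on the Lie derivative of $\tt\G_A$ will then follow by combining a pointwise Gaussian derivative estimate for $\tt\G_0$ with a chain rule through $T_A$ and the structural bounds on $T_A$ recalled in \eqref{two-ineq}.

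The first step is to establish the analogous estimate for the canonical heat kernel, namely the existence of $C>0$ such that
\[
|\tt X_j \tt\G_0(\tt y, s)| \leq \frac{C}{s}\, d_0(\tt y)\, \tt\G_0(\tt y, s), \qquad \forall\, (\tt y, s) \in (\rnm\ti\rn)\ti(0,+\i).
\]
This follows by direct differentiation of the explicit Gaussian form $\tt\G_0(\tt y, s) = s^{-\tt Q/2}\exp(-d_0(\tt y)^2/s)$, once one notes that $|\tt X_j d_0|$ is bounded uniformly on $\tt\GG\setminus\{0\}$. The latter is a consequence of homogeneity: both $\tt X_j$ and $d_0$ are $\tt\d_r$-homogeneous of degree $1$, so $\tt X_j d_0$ is homogeneous of degree $0$, and since $d_0$ is smooth off the origin, $\tt X_j d_0$ is continuous (hence bounded) on the unit sphere $\{d_0 = 1\}$, a bound that extends globally by homogeneity.

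The second step is the chain rule. Since $T_A$ is a stratification-preserving Lie group automorphism of $\tt\GG$, its Lie algebra differential maps the first stratum $V_1 = \span\{\tt X_1, \ldots, \tt X_{m_1}\}$ into itself, so there exist constants $\{\alpha_{ij}(A)\}$ such that
\[
\tt X_i \bigl[f\circ T_A\bigr](\tt x, x) = \sum_{j=1}^{m_1} \alpha_{ij}(A)\, (\tt X_j f)\bigl(T_A(\tt x, x)\bigr),
\]
for every smooth $f$. By the uniform bounds on $T_A$ and its Jacobian established in Theorem 2.7 of \cite{bonfiglioli2004families}, the coefficients $\alpha_{ij}(A)$ are controlled purely in terms of the ellipticity constant $\Lambda$. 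Applying this identity to $f = \tt\G_0(\cdot, t)$ and inserting the pointwise derivative bound from the first step, then invoking the comparison $d_0\bigl(T_A(\tt x, x)\bigr) \leq c_\Lambda\, d_0(\tt x, x)$ from \eqref{two-ineq} together with the equivalence of $d_0$ with $d_{\tt\GG}$, yields the desired estimate \eqref{pachino}.

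The main obstacle is justifying the chain rule with coefficients depending only on $\Lambda$: this requires carefully unpacking the construction of $T_A$ in \cite{bonfiglioli2004families} to confirm that it acts linearly on the first stratum with entries uniformly bounded by $\Lambda$. Once this is established, the bound on $\tt\G_0$ is a routine computation from the explicit Gaussian form, and combining everything is simply bookkeeping with the distance and Jacobian bounds already recorded in \eqref{two-ineq}.
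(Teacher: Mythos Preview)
Your proposal is correct and follows essentially the same route as the paper: represent $\tt\G_A$ via $T_A$ and the canonical heat kernel, differentiate the explicit Gaussian, use that $\tt X_j d_0$ is bounded by the degree-zero homogeneity argument, and finish with the distance comparison $d_0(T_A(\cdot)) \leq c_\Lambda d_0(\cdot)$. The only refinement worth noting is that the paper resolves your ``main obstacle'' directly by invoking Theorem 2.2 of \cite{bonfiglioli2004families}, which gives the chain-rule coefficients explicitly as $\alpha_{ij}(A) = (A^{-1/2})_{ij}$; the uniform bound in $\Lambda$ is then immediate from the ellipticity of $A$, and no further unpacking of the construction of $T_A$ is needed.
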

\begin{proof}
	As shown in the proof of Lemma \ref{menominorepiu}, there exists a Lie group automorphism $T_A$ and an homogeneous distance ${d_0}$ (smooth out of the origin and homogeneous of degree one w.r.t. the dilations $\widetilde{\delta}_r$) such that
	\begin{equation}
		\tt\G_{A}(\tilde x,x,t)=J_A\frac{\exp\Bigg(-\frac{ d_0 (T_{A}\big(\tt x,x)\big)^2}{t}\Bigg)}{t^{\frac{\tilde{Q}}{2}}}.
	\end{equation}
	By the chain rule we get
	\begin{equation}\label{mirabella-imbaccari}
		\tt X_i \tt\G_{A}(\tilde x,x,t)= \tt\G_{A}(\tilde x,x,t)\left(-\frac{2}{\sqrt{t}} \frac{ d_0 \big(T_{A}(\tt x,x)\big) }{\sqrt{t}} \tt X_i  \Big(d_0 \big(T_{A}(\tt x,x)\Big) \right).
	\end{equation}
	From the second estimate in \eqref{two-ineq} together with the equivalence of $ d_0$ and $d_{\tt\GG}$ we infer
	\begin{equation}\label{kamarina}
		d_0 \big(T_{A}(\tt x,x)\big)\leq c\, d_{\tt\GG} (\tt x,x),
	\end{equation} 
	for some $c>0$ depending on $\Lambda$. Moreover, from Theorem 2.2 in \cite{bonfiglioli2004families} it follows that
	\begin{equation}
		 X_i  \Big(d_0 \big(T_{A}(\tt x,x)\Big)=\sum_{j=1}^{m_1} \Big(A^{-\frac{1}{2}}\Big)_{ij}X_jd_0\big(T_{A}(\tt x,x)\big).
	\end{equation}
	Since ${d_0}$ is homogeneous of degree one, its Lie derivatives are homogeneous of degree zero and hence bounded away from the origin. Therefore,  $X_i  \Big(d_0 \big(T_{A}(\tt x,x)\Big)$ is uniformly bounded for all $A$ satisfying the ellipticity bounds. 
	
	The lemma is an immediate consequence of \eqref{mirabella-imbaccari} and the above observations.
\end{proof}
We are now in position to establish a local comparison between the fundamental solution $\tt\G$ of the lifted operator \eqref{HGlifted} and its parametrix $\tt Z$. 
\begin{proposition}\label{Prop-closetoparametrix}
	Assume that \textbf{(H1)}, \textbf{(H2)}, \textbf{(H3)} and \textbf{(H4)} hold. Then, for every $\y>0$ there exists $K>0$ (depending only on $\y$ and structural assumptions) such that 
	\begin{equation}\label{close-to-parametrix}
		(1-\y)\tt Z(\tt z,\tt \z)\leq \tt\G(\tt z,\tt \z)\leq (1+\y)\tt Z(\tt z,\tt \z),
	\end{equation}
	for every $\tt z,\tt \z\in O_K:=\Big\{\tt Z(\tt z,\tt \z)\geq K\Big\}$.
\end{proposition}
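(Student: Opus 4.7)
The plan is to exploit the parametrix decomposition $\tt\G = \tt Z + \tt J$ and to show that $|\tt J|/\tt Z \to 0$ uniformly on $O_K$ as $K \to +\infty$. First, I would note that on $O_K = \{\tt Z(\tt z, \tt\z) \geq K\}$ the Gaussian upper bound \eqref{gaussian-parametrix} for the parametrix forces
\[
(t-\t) \leq (c_\l/K)^{2/\tt Q}, \qquad \frac{d_{\tt\GG}(\tt z, \tt\z)^2}{t-\t} \leq c_\l \log\!\Bigl(c_\l\big/ \bigl(K(t-\t)^{\tt Q/2}\bigr)\Bigr),
\]
so both quantities are controlled by powers/logs of $K$; in particular $t-\t$ becomes arbitrarily small as $K \to +\infty$.

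Next, the estimate \eqref{J-bound}, applied to the lifted operator $\HG$ (to which Theorem \ref{Th-Fundamental} applies), reads
\[
|\tt J(\tt z, \tt\z)| \leq C\,(t-\t)^{\a/2}\,\frac{1}{(t-\t)^{\tt Q/2}}\,\exp\!\Bigl(-\tfrac{d_{\tt\GG}^2}{\tilde c\,(t-\t)}\Bigr),
\]
for some $\tilde c \geq c_\l$ depending only on structural data. The right-hand side can be identified, up to a multiplicative constant, with $(t-\t)^{\a/2}\,\tt\G_{\m^*\mathbb{I}_{m_1}}(\tt z, \tt\z)$ for a suitable scalar $\m^* > 0$. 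The heart of the proof is then the comparison between $\tt\G_{\m^*\mathbb{I}_{m_1}}$ and $\tt Z = \tt\G_{A(\tt\z)}$ on $O_K$. I would enlarge $\m^*$ if necessary so that $\m^*\mathbb{I}_{m_1} - A(\tt\z)$ is positive definite with operator norm bounded by some $\bar\m > 0$, uniformly in $\tt\z$ (possible since $A \in \textbf{M}_\l$), and apply Lemma \ref{menominorepiu} with $A^+ = \m^*\mathbb{I}_{m_1}$ and $A^- = A(\tt\z)$. The required hypothesis that $\tt\G_{\m^*\mathbb{I}_{m_1}}(\tt z, \tt\z)$ is large on $O_K$ is secured by Lemma \ref{sipuofaredipiu} combined with the Gaussian lower bound in \eqref{gaussian-parametrix} for $\tt Z$. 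This yields
\[
\tt\G_{\m^*\mathbb{I}_{m_1}}(\tt z, \tt\z) \leq C\, K^{-c_3 \bar\m}\,(t-\t)^{-\tt Q c_3 \bar\m/2}\, \tt Z(\tt z, \tt\z).
\]

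Combining the displays produces
\[
\frac{|\tt J(\tt z, \tt\z)|}{\tt Z(\tt z, \tt\z)} \leq C\, K^{-c_3 \bar\m}\,(t-\t)^{\a/2 - \tt Q c_3 \bar\m/2}.
\]
Choosing $\bar\m$ small enough that the exponent of $(t-\t)$ is non-negative (and using $t-\t \leq (c_\l/K)^{2/\tt Q}$ from the first step if one prefers a pure power of $K$), this ratio tends to $0$ as $K \to +\infty$. For a prescribed $\y > 0$ one then fixes $K$ so large that $|\tt J|/\tt Z \leq \y$ on $O_K$, which is precisely \eqref{close-to-parametrix}.

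The main obstacle I anticipate is reconciling the Gaussian constants appearing in the a priori bounds for $\tt J$ and $\tt Z$, which do not match and cannot be forced to match by elementary scaling of the matrix $A(\tt\z)$ alone. The two-step comparison through Lemmas \ref{sipuofaredipiu} and \ref{menominorepiu} is designed to bridge this gap, and it is the one place in the argument where the \emph{freeness} of $\tt\GG$ is genuinely used, through the automorphism $T_A$ from \cite{bonfiglioli2004families}; this is precisely what justifies the lifting from $\GG$ to $\tt\GG$ performed throughout the section.
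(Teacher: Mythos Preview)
Your argument has a genuine gap at the step ``choosing $\bar\mu$ small enough.'' The constant $\tilde c$ in \eqref{J-bound} is a fixed structural constant coming from the general Gaussian estimates for $\tt Z$ and $\tt G$; consequently the scalar $\mu^*$ for which the right-hand side of \eqref{J-bound} is dominated by $(t-\t)^{\a/2}\tt\G_{\mu^*\mathbb{I}}$ is also fixed (and potentially large). Hence $\bar\mu = \|\mu^*\mathbb{I}_{m_1} - A(\tt\z)\|$ is \emph{not} at your disposal: it is bounded below by a fixed positive structural constant. If $\a/2 - \tt Q c_3\bar\mu/2 < 0$ (which you cannot exclude), the factor $(t-\t)^{\a/2 - \tt Q c_3\bar\mu/2}$ blows up as $t-\t\to 0^+$, and since $O_K$ contains points with $t-\t$ arbitrarily small (only an \emph{upper} bound on $t-\t$ is available), your bound on $|\tt J|/\tt Z$ does not tend to zero uniformly on $O_K$. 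The parenthetical remark about substituting $t-\t\le (c_\l/K)^{2/\tt Q}$ does not help: an upper bound on $t-\t$ gives no control on a negative power of $t-\t$.

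This is precisely the obstacle you anticipate in your last paragraph, and the paper resolves it by a device you do not use: one first normalizes $A(\tt\z)=\mathbb{I}_{m_1}$ (via the automorphism $T_{A(\tt\z)}$ on the free group $\tt\GG$) and then introduces a \emph{perturbed} operator $\HG^r$ whose coefficient matrix $A^r$ agrees with $A$ near $\tt\z$ but is frozen to $\mathbb{I}_{m_1}$ outside a small ball of radius $r$. For $r$ small, H\"older continuity forces $\|A^r - \mathbb{I}_{m_1}\|_\infty \le \mu/2$ \emph{globally}, so the parametrix $\tt Z^r$ and the kernel $\tt G^r$ satisfy Gaussian bounds with $\tt\G_{(1+\mu)\mathbb{I}}$ in place of a generic Gaussian. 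Now $\mu$ \emph{is} a free parameter, and one takes $\mu = \a/(\tt Q c_3)$ so that the bad exponent vanishes. The price is two extra error terms (the differences between the perturbed and original parametrix decompositions), but these are supported where $A^r\ne A$, hence away from the pole, and are bounded by absolute constants; they are then trivially $\le \y\,\tt Z/3$ on $O_K$ for $K$ large. Without this localization-and-freezing step, the direct comparison you propose does not close.
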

\begin{proof}
	Without loss of generality we may assume $\tt \z =0$ and adopt the simpler notation $\tilde{\G}(\tilde{z},0)=\tilde{\G}(\tilde{z})$ and $\tilde{Z}(\tilde{z},0)=\tilde{Z}(\tilde{z})$. We also recall that we here employ the notation $\tilde{z}=(\tilde{x},x,t), \tt \z=(\tt \xi,\xi,\tau) \in \R^{\tt N}\ti \rn \ti \R$.

	We first assume that $A(0)=\mathbb{I}_{m_1}$, and then we explain how to remove such assumption. Let $\psi\in C^2(\R)$ be a cut-off function such that $\psi(w)=0$ if $w\geq 1$, and $\psi(w)=1$ if $w\leq 1/2$. For a fixed $r>0$ to be chosen later, we set $A^r(\tt z):= A(0) + \psi\Big(\tt \d_{\frac{1}{r}}d_{\tt \GG}(\tt z)\Big)\big(A(\tt z)-A(0)\big)$,  and consider the perturbed operator 
	\begin{equation}\label{HGperturbed}
		\HG^r \tilde u(\tt z):=\sum_{i,j=1}^{m_1} \tt X_i\Big(a^r_{ij}(\tt z)\tt X_j\Big)\tt u(\tt z) + \sum_{i=1}^{m_1} b_i(z)\tilde X_i\tilde u(\tt z)+ c(z)\tilde u(\tt z)-\p_t \tilde u(\tt z),\qquad \tt z\in \tilde{\O}.
	\end{equation}
	By Theorem \ref{Th-Fundamental}, $\HG^r$ has a fundamental solution $\tt \G^r$ given by 
	\begin{equation}
		\tt \G^r(\tt z) = \tt Z^r (\tt z) + \int_0^t \int_{\rnm\ti \rn} \tt Z^r(\tt z,\tt y, y, s) \tt G^r(\tt y,y, s,0) d\tt y \,dy \,d s,
	\end{equation}
	where $\tt Z^r$ is the parametrix, and $\tt G^r$ is given by
	\begin{equation}\label{gr}
		\tt G^r(\tt z,0)=\sum_{k=1}^\infty \big(\HG^r \tt Z^r\big)_k(\tt z,0),
	\end{equation}
	where $\big(\HG^r \tt Z^r\big)_1:=\HG^r \tt Z^r$ and 
	\begin{equation}\label{hgkr}
		\big(\HG^r \tt Z^r\big)_{k+1}(\tt z,0):=\int_0^t\int_{\rnm\ti \rn}  \big(\HG^r \tt Z^r\big)_1(\tt z, \tt y,y, s)\big(\HG^r \tt Z^r\big)_k(\tt y,y, s,0)d\tt y \,dy \, d s.
	\end{equation}
	Observe that, by construction, $A^r(0) = A(0)$, hence from the definition of parametrix we deduce $\tt Z^r(\tt z) = \tt Z(\tt z)$ for every $\tilde{z} \in \R^{\tt N}\ti \rn \ti \R$. For the same reason, by the properties of $\psi$ we get
	\begin{equation}\label{parametrix}
		\tt Z^r(\tt z,\tt \z) = \tt Z(\tt z,\tt \z),\qquad \text{if }\, d_{\tt \GG}(\tt \z)\leq \frac{r}{2}.
	\end{equation}
	We now prove the comparison \eqref{close-to-parametrix} by expanding \eqref{G=Z+J} as follows
	\begin{equation}\label{aggiungi-togli}
		\begin{aligned}
			\tt \G(\tt z) &= \tt Z (\tt z) + \int_0^t \int_{\rnm\ti \rn} \tt Z^r(\tt z, \tt y,y, s) \tt G^r(\tt y,y, s,0) d\tt y dy d s \\
			&\qquad \qquad+\int_0^t \int_{\rnm\ti \rn} \Big( \tt Z^r(\tt z, \tt y,y, s)-\tt Z(\tt z,\tt y,y, s)\Big)\tt G^r(\tt y,y, s,0) d\tt y dy d s\\
			&\qquad \qquad+\int_0^t \int_{\rnm\ti \rn} \tt Z^r(\tt z,\tt y,y, s) \Big(\tt G (\tt y,y, s,0) -\tt G^r(\tt y,y, s,0) \Big)d\tt y dy d s \\
			&=:  \tt Z (\tt z)+I+II+III.		
		\end{aligned}
	\end{equation}
	The proof of the proposition is completed once we show that $|I|,|II|,|III|\leq\frac{\y}{3}\tt Z (\tt z)$ on the set $\big\lbrace \tt z \in \rnm\ti \rn\ti\R : Z(\tt z)\geq K \big\rbrace$.

	Let us begin by estimating the term $I$. We start by fixing $T>0$ and noticing that, thanks to \eqref{gaussian-parametrix}, we can choose $K=K(T)>0$ such that
	\begin{equation}\label{time-estimate}
		O_K\subset \Big\{(\tt x,x, t)\in \rnm\ti \rn\ti\R : 0\leq t\leq T\Big\}.
	\end{equation}
	We now focus our attention on $\tt Z^r$ and $\tt G^r$. To this end we apply Lemma \ref{sipuofaredipiu} and Lemma \ref{menominorepiu} with matrices $A^-:=\mathbb{I}_{m_1}=A(0)=A^r(0)$ and $A^+:=(1+\mu) \, \mathbb{I}_{m_1}=A(0)+\mu \, \mathbb{I}_{m_1}$, for some $0<\mu<\lambda$ to be chosen later, where $\lambda$ is the ellipticity constant appearing in \eqref{def-ellipticity-class}. By definition $\tt Z(\tt z)=\tt\G_{A^-}(\tt z)$, and the aforementioned lemmas yield 
	\begin{equation}\label{new-estimate-z}
		\tt Z (\tilde x,x,t)\leq c_1 \tt \Gamma_{A^+}(\tilde x,x,t) , \qquad \forall(\tilde x,x,t) \in \rnm\times \rn \times (0,+\infty), 
	\end{equation} 
	and
	\begin{equation}\label{marina-di-ragusa}
		\tt\G_{A^{+}}(\tilde x,x,t) \leq c_2 K^{-c_3\mu} t^{-\frac{\tt Q}{2} c_3 \mu}\, \tt Z(\tilde x,x,t),
	\end{equation}
	in the set $O_K^+:=\Big\{\tt\G_{A^+}(\tt x,x,t) \geq K \Big\} $, for every fixed $K>0$.

	We now choose $r=r(M_1,\mu)$ such that
	\begin{equation}
		r^\a\leq \frac{\mu}{4 M_1},
	\end{equation} 
	where $\a$ and $M_1$ are given by \textbf{(H4)}. Then by the definition of $\psi$ we obtain 
	\begin{equation}
		 \Big(1-\frac{\mu}{2}\Big) \, \mathbb{I}_{m_1}=A(0) - \frac{\mu}{2} \, \mathbb{I}_{m_1}\leq A^r(\tt z)\leq A(0) + \frac{\mu}{2} \, \mathbb{I}_{m_1}= \Big(1+\frac{\mu}{2}\Big) \, \mathbb{I}_{m_1}<A^+, \quad \forall \tt z \in \rnm\ti \rn\ti\R.
	\end{equation}
	From this estimate we deduce that $A^r$ satisfies a uniform ellipticity condition with constant $1+\mu/2$. This fact is crucial, as Theorem B in \cite{bonfiglioli2002uniform} provides the following Gaussian upper bounds for the parametrix.
	\begin{equation}
		\tt Z^r(\tt z,\tt \z)\leq  c_\mu \tt\G_{\mu}(\tt z,\tt \z),
	\end{equation}
	for some $c_\mu>0$ depending solely on $\mu$, with $\tt\G_{\mu}$ denoting the fundamental solution of $\tt{\mathscr{H}}_{\mu}=(1+\m/2)\Delta_{\tt\GG}-\p_t$. Hence, by Lemma \ref{sipuofaredipiu} we deduce the following bound for the parametrix 
	\begin{equation}\label{Z-r-est}
		\tt Z^r(\tt z,\tt \z)\leq c_1 \tt\G_{A^+}(\tt z,\tt \z),
	\end{equation}
	in place of the general Gaussian upper bound \eqref{gaussian-parametrix}. Analogously, the derivatives $X_i \tt Z^r(\tt z,\tt \z)$ and $X_i X_j\tt Z^r(\tt z,\tt \z)$ can also be bounded by $\tt\G_{A^+}(\tt z,\tt \z)$ (as in \eqref{Gaussian-estimates-1} and \eqref{Gaussian-estimates-2}). Hence, by a careful inspection of the computation made in the proof of Theorem \ref{Th-Fundamental}, we deduce the following estimate 
	\begin{equation}\label{G-r-estimate}
		|\tt G^r(\tt z,\tt \z)|\leq \frac{c_T (t-\tau)^{\frac{\a}{2}-1}}{(t-\t)^{\frac{Q}{2}}}\tt\G_{A^+}(\tt z,\tt \z),\\
	\end{equation}
	for some $c_T$ depending only on $T$. Thus, by \eqref{Z-r-est}, \eqref{G-r-estimate}, and the reproduction property we infer that
	\begin{equation}
		|I|\leq \frac{2 c_T}{\a} T^\frac{\a}{2}\tt\G_A^{+}(\tilde x,x,t),\qquad \forall (\tt x,x, t)\in O_K.
	\end{equation}
	Finally, using \eqref{new-estimate-z}, we observe that $O_K \subset O^+_{K/c_4}$. This inclusion, together with \eqref{marina-di-ragusa}, yields
	\begin{equation}\label{playa-grande}
		|I|\leq \frac{2 c_T}{\a} T^\frac{\a}{2}\tt\G_A^{+}(\tilde x,x,t) \leq \frac{C_T}{\a} T^\frac{\a}{2} c_2  K^{-c_3\mu} t^{-\frac{\tt Q}{2} c_3 \mu}\, \tt Z(\tilde x,x,t),\qquad \forall (\tt x, x,t)\in O_K,
	\end{equation}
	implying the estimate 
	\begin{equation}\label{I-est}
		|I|\leq \frac{\y}{3} \tt Z(\tt x,x, t),\qquad \forall (\tt x,x, t)\in O_K,
	\end{equation}	
	provided we pick $\mu = \frac{\a}{\tt Qc_3}$ and $K=K(\H,\l,\eta)$ sufficiently large.

	We now turn to the estimate of term $|II|$ in \eqref{aggiungi-togli}. By \eqref{gaussian-parametrix} and \eqref{parametrix}, we infer that 
	\begin{equation}
		\sup_{(\tt y,y, s)\in\rnm\ti \rn\ti \R} \Big|\tt Z^r(\tt z, \tt y,y, s)-\tt Z(\tt z, \tt y,y, s)\Big|\leq c,
	\end{equation}
	for some $c>0$. Hence, from \eqref{G-r-estimate} and the normalization \eqref{normalization} of $\tt\G_{A^+}$ we deduce $|II|\leq C_{II} T^\frac{\a}{2}$, for some $C_{II}>0$ only depending on $\H$, $\l$, $r$ and $T$. In particular, this implies
	\begin{equation}\label{II-est}
		|II|\leq C_{II}T^\frac{\a}{2}K^{-1}\, \tt Z(\tilde x,x,t),\qquad \forall (\tt x,x, t)\in O_K.
	\end{equation} 
	Finally, we estimate $III$ in an analogous manner. By using the uniform bound
	\begin{equation}
		\big|\tt G (\tt y,y, s,0) -\tt G^r(\tt y,y, s,0)\big|\leq C, 
	\end{equation}
	whose proof follows the same argument as in Lemma 4.7 of \cite{polidoro-parametrix}, we obtain 
	\begin{equation}\label{III-est}
		|III|\leq C_{III} T,\qquad \forall (\tt x,x, t)\in O_K,
	\end{equation}	
	for some constant $C_{III}>0$ only depending on $\H$, $\l$, $r$ and $T$. Eventually combining \eqref{aggiungi-togli}, \eqref{I-est}, \eqref{II-est} and \eqref{III-est}, we infer
	\begin{equation*}
		\tt \G( \tt z)\leq \left(1+\frac{\eta}{3}+ \frac{C_{II}T^{\frac{\alpha}{2}}}{K}+\frac{C_{III}T}{K}\right)\tt Z(\tt z)\leq (1+\eta)\tt Z(\tt z),
	\end{equation*}  
	provided that we choose $K=K(\H,\l,\eta,T,\alpha)$ big enough. A symmetric argument yields the corresponding lower estimate $(1-\eta)\tt Z(\tt z)\leq \tt \G( \tt z)$.

	We now remove the assumption $A(0)=\mathbb{I}_{m_1}$. Let us consider the constant coefficients operator
	\begin{equation}
		\tt{\mathscr{H}}_{A_0}:=\sum_{i,j=1}^{m_1}a_{ij}(0)\tt X_i\tt X_j-\p_t,
	\end{equation} 
	and let $T_{A_0}(\tt x,x)$ denote the automorphism turning $\tt{\mathscr{H}}_{A_0}$ into the canonical heat operator (as follows from \cite{bonfiglioli2004families}). The change of variable $(x,t)\mapsto\big(T_{A_0}(\tt x,x),t\big)=:(\tt y,y,t)$ turns $\H$ into the operator
	\begin{equation}\label{Santa Maria del focallo}
		\overline{\HG} =\sum_{i,j=1}^{m_1} \tt X_i\big(\bar a_{ij}(\tt y,y,t)\tt X_j\big) + \sum_{i=1}^{m_1} \bar b_i(\tt y,y,t)\tt X_i+ c(\tt y,y,t)-\p_t ,
	\end{equation}
	where
	\begin{equation}
		\bar A (\tt y,y,t)=A^{-\frac{1}{2}}(0,0,0)A(\tt y,y,t)A^{-\frac{1}{2}}(0,0,0),\qquad \bar b(\tt y,y,t) = A^{-\frac{1}{2}}(0,0,0) b(\tt y,y,t).
	\end{equation}
	Clearly, $\overline{\HG}$ satisfies \textbf{(H1)}, \textbf{(H2)}, \textbf{(H3)} and \textbf{(H4)}, and since $\bar A (0,0,0)=\mathbb{I}_{m_1}$ the previous result ensures that, for every $\eta >0$, there exists a constant $K>0$ such that
	\begin{equation}
		(1-\y) \bar Z(\tt y,y,t)\leq \bar \G(\tt y,y,t)\leq (1+\y)\bar Z(\tt y,y,t),
	\end{equation}
	for every $(\tt y,y,t)\in \Big\{\bar Z(\tt y,y,t)\geq K\Big\}$. The conclusion then follows from the identities
	\begin{equation}
		\bar Z(\tt y,y,t)=\bar Z(T_{A_0}\big(\tt x,x),t)\big)= Z(\tt x,x,t),\qquad \bar \G(\tt y,y,t)=\bar \G\big(T_{A_0}(\tt x,x),t)\big)= \G(\tt x,x,t).
	\end{equation} 
	This completes the proof of the proposition.
\end{proof}
By a similar argument one can show that, locally, the Lie derivatives of $\tt\G$ can be estimated in terms of $\tt\G$ itself, rather than relying on the Gaussian upper bound \eqref{Gaussian-estimates-1}.
\begin{proposition}\label{Bound-derivative}	
	Assume that \textbf{(H1)},\textbf{(H2)},\textbf{(H3)} and \textbf{(H4)} hold. Then, there exist $c,K>0$ (depending only on structural assumptions) such that 
	\begin{equation}\label{bound-derivative}
		\begin{aligned}
			&|\tt X_i \tt \G(\tt z,\tt \z)|\leq c \left(\frac{1}{\sqrt{t-\t}}d_{\tt\GG}(\tt \x,\x,\tt x,x) + 1 \right) \tt \G(\tt z,\tt \z),\\
			&|\tt X_i \tt \G(\tt \z,\tt z)|\leq c \left(\frac{1}{\sqrt{t-\t}}d_{\tt\GG}(\tt x,x,\tt \x, \x) + 1 \right) \tt \G(\tt z,\tt \z),
		\end{aligned}
	\end{equation}
	for every $\tt z,\tt \z \in \tt O_K:=\big\{\tt\G(\tt z,\tt \z)\geq K\big\}$. 
\end{proposition}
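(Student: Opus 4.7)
The proof follows closely the blueprint of Proposition~\ref{Prop-closetoparametrix}: I would differentiate the parametrix decomposition $\tt\G = \tt Z + \tt J$ from \eqref{G=Z+J}--\eqref{J}, control the parametrix term via Lemma~\ref{derivatives-constant-operator}, and absorb the perturbative remainder on $\tt O_K$ using the local equivalence between $\tt\G$ and $\tt Z$ already established there. Fixing $\tt\z$ and applying $\tt X_i$ in the $\tt z$-variable (differentiation under the integral sign being justified by the Gaussian bounds \eqref{Gaussian-estimates}--\eqref{Gaussian-estimates-2} exactly as in the proof of Theorem~\ref{Th-Fundamental}), one obtains
\[
\tt X_i \tt\G(\tt z,\tt\z)
= \tt X_i \tt Z(\tt z,\tt\z)
+ \int_\t^t\!\!\int_{\rnm\times\rn} \tt X_i \tt Z(\tt z,\tt y, y, s)\,\tt G(\tt y, y, s,\tt\z)\,d\tt y\,dy\,ds.
\]
For the first summand, Lemma~\ref{derivatives-constant-operator} applied to the frozen operator $\tt{\mathscr{H}}_{A(\z)}$ yields the pointwise bound $|\tt X_i \tt Z(\tt z,\tt\z)| \le c\,(d_{\tt\GG}(\tt\x,\x,\tt x,x)/(t-\t))\,\tt Z(\tt z,\tt\z)$. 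Choosing $\y=1/2$ in Proposition~\ref{Prop-closetoparametrix} fixes an auxiliary $K_0$ such that $\tt Z \le 2\,\tt\G$ on $\tt O_{K_0}$, while the Gaussian bounds \eqref{Gaussian-estimates} and \eqref{Gaussian-estimates-lower} jointly show that $d_{\tt\GG}^2/(t-\t)$ is bounded by a structural constant on this set. Rearranging (and factoring out one copy of $(t-\t)^{-1/2}$) gives the desired control of the parametrix contribution by the right-hand side of \eqref{bound-derivative}.

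For the convolution term I would insert the derivative estimate just obtained together with the bound \eqref{G-estimate-1} for $\tt G$ and apply the reproduction-type argument of \eqref{J-bound}; the resulting time integral $\int_\t^t(s-\t)^{\a/2-1}(\cdot)\,ds$ produces an extra factor $(t-\t)^{\a/2}$ which, on $\tt O_K$ with $K$ chosen sufficiently large so that $t-\t$ is correspondingly small, forces this remainder to be dominated by $\tt\G$ itself. This completes the first inequality in \eqref{bound-derivative}. The second inequality, concerning derivatives in the pole variable $\tt\z$, is obtained by applying the same reasoning to the fundamental solution $\tt\G^*$ of the adjoint operator $\HG^*$ (which satisfies the same structural assumptions \textbf{(H1)}--\textbf{(H4)}, with parametrix controlled by the dual version of Lemma~\ref{derivatives-constant-operator} after exchanging the roles of the two variables), and then invoking the identity $\tt\G^*(\tt z,\tt\z)=\tt\G(\tt\z,\tt z)$ from item~\emph{\ref{item10}.} of Theorem~\ref{Th-Fundamental}, which converts $\tt X_i$-derivatives in the $\tt\z$ slot of $\tt\G$ into $\tt X_i$-derivatives in the $\tt z$ slot of $\tt\G^*$.

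The main technical obstacle is the handling of $\tt X_i \tt J$: differentiating $\tt Z$ introduces an additional $(t-s)^{-1/2}$ singularity absent from the estimate \eqref{J-bound} of $\tt J$, so a sharper use of the H\"older exponent $\a$ from \textbf{(H4)} is needed to close the $s$-integration, and one has to be careful that the extra derivative bound for $\tt Z$ is available uniformly. Mirroring the perturbative choice of the regularized matrix $A^r$ employed in the proof of Proposition~\ref{Prop-closetoparametrix} is the cleanest way to gain the necessary smallness factor, after which the closeness of $\tt\G$ to $\tt Z$ on $\tt O_K$ closes the argument; as in Proposition~\ref{Prop-closetoparametrix}, the final value of $K$ depends on all the structural constants.
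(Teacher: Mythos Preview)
Your proposal is essentially correct and follows the same route as the paper: differentiate the parametrix expansion, control the leading term $\tt X_i\tt Z$ via Lemma~\ref{derivatives-constant-operator}, invoke the $A^r$ perturbation of Proposition~\ref{Prop-closetoparametrix} for the remainder, and deduce the second inequality from the adjoint identity in item~\emph{\ref{item10}.} of Theorem~\ref{Th-Fundamental}. The paper is organized around the four-term expansion \eqref{aggiungi-togli} from the outset (giving $\tt X_i\tt Z + I' + II' + III'$ after differentiation), whereas you first write the naive two-term split $\tt X_i\tt Z + \tt X_i\tt J$ and only afterwards bring in the $A^r$ device; but the substance is the same.

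One refinement is worth making explicit. Your middle paragraph asserts that the convolution term is handled by inserting \eqref{G-estimate-1} into the reproduction argument of \eqref{J-bound}, with the resulting factor $(t-\t)^{\a/2}$ forcing domination by $\tt\G$ on $\tt O_K$. As written this does not close: reproducing two generic Gaussians yields a Gaussian with a \emph{degraded} exponent constant $\tilde c>c_\l$ (this is already visible in \eqref{J-bound}), and on $\tt O_K$ the ratio of that Gaussian to $\tt Z$ carries an uncontrolled negative power of $t-\t$. The $s$-integral $\int_\t^t (s-\t)^{\a/2-1}(t-s)^{-1/2}\,ds$ that you flag as the obstacle is in fact already convergent; the genuine reason the $A^r$ perturbation is indispensable is that it replaces the generic Gaussian by $\tt\G_{A^+}$ with $\mathbb{O}<A^+-A(0)<\mu\,\mathbb{I}_{m_1}$, so that \emph{exact} reproduction of $\tt\G_{A^+}$ applies and Lemma~\ref{menominorepiu} converts back to $\tt Z$ with a power loss $t^{-\frac{\tt Q}{2}c_3\mu}$ that one absorbs by choosing $\mu=\a/(\tt Q c_3)$, exactly as in \eqref{playa-grande}. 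With this clarification your sketch matches the paper's proof.
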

\begin{proof}
	The proof is a consequence of Lemma \ref{derivatives-constant-operator}, and follows the same lines as the proof of Proposition \ref{Prop-closetoparametrix}. We suppose again, without loss of generality, that $\tt \z = 0$,  $A(0)=\mathbb{I}_{m_1}$,  and we define the perturbed operator $\HG^r$ as in \eqref{HGperturbed}. Thanks to \eqref{Gaussian-estimates-1} we can take the Lie derivative $\tt X_i$ under the integral sign in \eqref{aggiungi-togli}, obtaining 
	\begin{equation}\label{aggiungi-togli-derivate}
		\begin{aligned}
			\tt X_i\tt \G(\tt z) &= \tt X_i\tt Z (\tt z) +\! \!\int_0^t\! \!\int_{\rnm\ti \rn} \!\!\!\!\!\!\!\!\tt X_i\tt Z^r(\tt z, \tt y, y, s) \tt G^r(\tt y,y, s,0) d\tt y d y d s \\
			&\qquad \qquad +\!\!\int_0^t\! \!\int_{\rnm\ti \rn}\!\!\!\!\!\!\!\! \tt X_i\Big( \tt Z^r(\tt z, \tt y, y, s) -\tt Z (\tt z, \tt y,y, s) \Big)\tt G^r(\tt y,y, s,0) d\tt y d y d s\\
			&\qquad \qquad +\!\!\int_0^t\! \!\int_{\rnm\ti \rn} \!\!\!\!\!\!\!\!\tt X_i\tt Z^r(\tt z,\tt  y, y, s) \Big(\tt G (\tt y,y, s,0)-\tt G^r(\tt y,y, s,0)\Big)  d\tt y d y d s \\
			&=:\tt X_i\tt Z (\tt z)+ I' +II'+III'.		
		\end{aligned}
	\end{equation}
	We fix $\y=1/2$ and choose $T>0$. By Proposition \ref{Prop-closetoparametrix} we infer that there exists $K>0$ such that $\tt O_K\subset O_{\frac{2}{3}K}\subset  \Big\{(\tt x,x, t)\in \rnm\ti\rn\ti\R : 0\leq t\leq T\Big\}$. Arguing as in the proof of Proposition~\ref{Prop-closetoparametrix} and applying Lemma \ref{derivatives-constant-operator}, we infer that there exists $C>0$ such that
	\begin{equation}
		|I'| \leq \frac{C}{K^{c_3 \mu}}\frac{\tt Z(\tt x,x,t)}{\sqrt{t}},\qquad |II'|\leq C T^\frac{\a-1}{2}, \qquad |III'|\leq C \sqrt{T},\qquad \forall (\tt x,x,t)\in \tt O_{K},
	\end{equation}
	concluding the validity of the first inequality in \eqref{bound-derivative}. In order to prove the second inequality, we first point out that the differentiation is carried out with respect to the adjoint variable $\tt z$. This is then overcome by invoking the property $\tt \G(\tt \x, \x, \t,\tt x, x, t)=\tt \G^*(\tt x, x, t,\tt \x, \x, \t)$ established in Theorem \ref{Th-Fundamental}, and applying the first inequality in \eqref{bound-derivative}.  Finally, assumption $A(0)=\mathbb{I}_{m_1}$ is removed as in the proof of Proposition \eqref{Prop-closetoparametrix}. This concludes the proof of the proposition.	
\end{proof}
We are now in position to state and prove a parabolic Harnack inequality for the lifted operator \eqref{HGlifted}. For every $\tt z_0 = (\tt x_0,x_0, t_0)\in\rnm\ti\rn\ti\R$, $r,\e>0$ and $m\in\N$, we define the truncated kernel set
\begin{equation}\label{ktilde}
	\tt K_{r,\e}^{(m)}(\tt x_0, x_0, t_0):=\overline{\tt \O^{(m)}_r(\tt x_0, x_0, t_0)}\cap \Big\{t\leq t_0 - \e\, r^2\Big\}.
\end{equation}
\begin{proposition}\label{lifted-harnack}
	Let $\HG$ be a differential operator of the form \eqref{HGlifted} satisfying assumptions \textbf{(H1)}, \textbf{(H2)}, \textbf{(H3)} and \textbf{(H4)}. For every $m\in\N$ with $m>2$, there exists three positive constants $r_0,\e_0$ and $C_P$ (only depending on structural assumptions and $m$) such that the following holds: for every $(\tt x_0,x_0, t_0)\in\tt\O$, $0<r\leq r_0$, $0<\e\leq \e_0$ and $\tt\O^{(m)}_{5r}(\tt x_0, x_0, t_0)\subset \tt\O$ we have
	\begin{equation}\label{parabolic-Harnack-lifted}
		\sup_{\tt K_{r,\e}^{(m)}(\tt x_0, x_0, t_0)} \tt u \leq C_P \, \tt u(\tt x_0, x_0, t_0),
	\end{equation}
	for every $\tt u\geq0$ classical solution to $\HG \tt u =0$ in $\tt\O$.
\end{proposition}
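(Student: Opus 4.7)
The plan is to realize, in the lifted subelliptic setting, the strategy sketched for uniformly parabolic operators, using Propositions \ref{Prop-closetoparametrix} and \ref{Bound-derivative} as the key new inputs. The starting point is the mean value formula \eqref{tilde-meanvalue} applied simultaneously at the reference point $\tt z_0$ and at an arbitrary point $\tt \z \in \tt K_{r,\e}^{(m)}(\tt z_0)$, and the goal is to transfer the elementary constant-coefficient argument displayed after \eqref{isola-delle-correnti} to the kernel $\tt M_r^{(m)}$ through the local equivalence $\tt \G \asymp \tt Z$.

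First, I would establish the lifted analogue of \eqref{isola-delle-correnti}, namely the existence of $r_0, \e_0 \in (0,1)$ and $m^\pm > 0$ such that
\begin{equation*}
\tt M_{r}^{(m)}(\tt\z,\tt z)\leq m^+ r^{\frac{m-2}{m+\tt N}} \quad \forall \tt z\in \tt \Omega_r^{(m)}(\tt\z), \qquad \tt M_{5r}^{(m)}(\tt z_0,\tt\z)\geq m^- r^m \quad \forall \tt\z\in \tt K^{(m)}_{r,\e}(\tt z_0),
\end{equation*}
for all $r\leq r_0$, $\e\leq \e_0$. By Proposition \ref{Prop-closetoparametrix}, for $K$ large enough (equivalently, after shrinking $r_0$) one has $\tt \G = (1+O(\eta))\tt Z$ on the relevant super-level sets, so $\tt \O_r^{(m)}$ coincides, up to a controlled perturbation, with the super-level set of $\tt Z^{(m)}$. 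Since $\tt Z$ is transformed into the canonical parabolic kernel via the automorphism $T_{A(0)}$ of \cite{bonfiglioli2004families}, the same explicit arguments that give \eqref{isola-delle-correnti} in the constant-coefficient case apply here. The control on $M_\GG$ appearing inside $\tt M_r^{(m)}$ is provided by Proposition \ref{Bound-derivative}, which bounds $|\tt X_i \tt \G|/\tt \G$ on $\tt O_K$ by $C(d_{\tt\GG}/\sqrt{t-\t}+1)/\sqrt{t-\t}$; together with the bound $d_{\tt\GG}(\tt\z,\tt z)^2\lesssim (t-\t)\log\big(r/(t-\t)^{(m+\tt Q)/2}\big)$ on $\tt\O_r^{(m)}(\tt\z)$, this yields the upper estimate, and the condition $t_0-\t\geq \e r^2$ yields the lower one.

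Second, I would prove the inclusion
$
\tt \O_{\vartheta r}^{(m)}(\tt\z) \subset \tt \O_{4r}^{(m)}(\tt z_0)\cap\{t\leq t_0-\e r^2\}
$
for every $\tt\z\in \tt K_{r,\e}^{(m)}(\tt z_0)$, provided $\vartheta$ is chosen small enough in terms of $\e$ and the Gaussian constants. Again, this is a geometric statement about super-level sets of $\tt Z$ (transferred to $\tt \G$ via Proposition \ref{Prop-closetoparametrix}), and the pseudo-triangular structure of $d_{\tt \GG}$ makes it routine once the kernel comparison is in place. Plugging \eqref{tilde-meanvalue} at $\tt\z$ with radius $\vartheta r$, using the inclusion, and then invoking \eqref{tilde-meanvalue} at $\tt z_0$ with radius $5r$ to reconstruct the volume integral as a kernel-weighted integral, we obtain
\begin{equation*}
\tt u(\tt\z)\leq \frac{m^+(\vartheta r)^{\frac{m-2}{m+\tt N}}}{\vartheta r}\int_{\tt\O^{(m)}_{4r}(\tt z_0)\cap\{t\leq t_0-\e r^2\}}\!\!\!\!\!\!\!\!\!\!\!\!\!\!\!\tt u \;+\; R(\tt\z)\; \leq\; \frac{5m^+ r_0^{\frac{m-2}{m+\tt N}-m}}{m^-\vartheta^{\frac{2+\tt N}{m+\tt N}}}\,\tt u(\tt z_0)+R(\tt\z),
\end{equation*}
where $R(\tt\z)$ collects the remainder coming from the $(\div_\GG b-c)\tt u$ term in \eqref{tilde-meanvalue}.

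The main obstacle, compared to the clean calculation recalled in the paper, is to dispose of the remainder $R(\tt\z)$, which has no analogue in the uniformly parabolic sketch. The plan is to show that $|R(\tt\z)|\leq C r^{\gamma}\sup_{\tt K_{r,\e}^{(m)}(\tt z_0)}\tt u$ for some $\gamma>0$. Indeed, $\div_\GG b$ and $c$ are bounded by \textbf{(H4)}, $\tt N_\rho^m(\tt\z,\tt z)$ is uniformly bounded on $\tt \O_\rho^{(m)}(\tt\z)$, and the volume of $\tt \O_\rho^{(m)}(\tt\z)$ (computable from the parametrix) grows like a positive power of $\rho$, so the double integral defining $R$ is integrable in $\rho$ near $0$ and produces the desired power $r^{\gamma}$. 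Taking the supremum over $\tt\z\in \tt K_{r,\e}^{(m)}(\tt z_0)$ then gives
\begin{equation*}
\sup_{\tt K_{r,\e}^{(m)}(\tt z_0)}\tt u\leq C_1\,\tt u(\tt z_0)+C_2 r^{\gamma}\sup_{\tt K_{r,\e}^{(m)}(\tt z_0)}\tt u,
\end{equation*}
and the assumption $r\leq r_0$, with $r_0$ chosen so that $C_2 r_0^{\gamma}\leq 1/2$, lets us absorb the second term, concluding with $C_P:=2C_1$.
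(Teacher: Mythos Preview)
Your kernel bounds and the nesting $\tt \O_{\vartheta r}^{(m)}(\tt\z)\subset \tt\O_{4r}^{(m)}(\tt z_0)\cap\{t\le t_0-\e r^2\}$ are essentially correct and match the paper's argument (the exponent should be $\tfrac{m-2}{m+\tt Q}$, with $\tt Q$ the homogeneous dimension of $\tt\GG$, not $\tt N$). The genuine gap is in your handling of the lower-order remainder.

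You only record a remainder $R(\tt\z)$ coming from the mean value formula applied at $\tt\z$ with radius $\vartheta r$. But to pass from $\int_{\tt\O_{4r}^{(m)}(\tt z_0)\cap\{\ldots\}}\tt u$ back to $\tt u(\tt z_0)$ you must invoke \eqref{tilde-meanvalue} a \emph{second} time, at $\tt z_0$ with radius $5r$, and this produces a second remainder $R_0$ of the same type. That term involves $\tt u$ integrated over $\tt\O_\rho^{(m)}(\tt z_0)$ for all $\rho\le 5r$, sets that reach up to $t=t_0$ and are \emph{not} contained in $\tt K_{r,\e}^{(m)}(\tt z_0)$ (nor in any translate of it). Consequently $|R_0|$ cannot be controlled by $Cr^\gamma\sup_{\tt K_{r,\e}^{(m)}(\tt z_0)}\tt u$, and your absorption inequality does not close; you would need an a priori bound on $\tt u$ near the top of $\tt\O_{5r}^{(m)}(\tt z_0)$, which is precisely what one is trying to prove.

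The paper avoids absorption altogether by a change of dependent variable. Setting $k:=(m_1+1)M_1\ge |\div_\GG b-c|$ and $\overline u:=e^{k(t-t_0)}\tt u$, $\underline u:=e^{-k(t-t_0)}\tt u$, these solve equations of the same form with zero-order coefficients $c\pm k$, so that $\div_\GG b-(c+k)\le 0$ and $\div_\GG b-(c-k)\ge 0$. One applies \eqref{tilde-meanvalue} to $\overline u$ at $\tt\z$ (the remainder is then $\le 0$ and can be dropped) and to $\underline u$ at $\tt z_0$ (the remainder is $\ge 0$ and can be dropped in the reverse inequality). Since $\overline u\asymp \tt u\asymp\underline u$ uniformly on $\tt\O_{5r}^{(m)}(\tt z_0)$, the chain of inequalities closes with a constant depending only on structural data, with no smallness assumption on $r$ beyond what the kernel estimates require.
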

\begin{proof}
	Let $m >2$, $(\tt x_0,x_0, t_0)\in\tt\O$ and $r >0$ such that $\tt \O^{(m)}_{4 r}\big(\tt x_0,x_0, t_0\big) \subset \tt \O$. We show that there exist $m^-,m^+>0$ and $r_0,\e_0,\vartheta\in (0,1)$ such that, for every $r\leq r_0$ and $\e\leq \e_0$, the following properties hold
	\begin{enumerate}
		\item $\tt K_{r,\e}^{(m)}(\tt x_0, x_0, t_0)\neq \emptyset$; \label{item1H}
		\item $ \tt M_{\vartheta r}^{(m)} \big(\tt \x,\x,\t,\tt x,x,t\big) \leq m^+r^{\frac{m-2}{m+\tt Q}}$ for every $(\tt x, x,t)\in \tt \O^{(m)}_{\vartheta r}\big(\tt \x, \x, \t\big)$; \label{item2H}
		\item $\tt \O^{(m)}_{\vartheta r}\big(\tt \x,\x, \t\big)\subset \tt \O^{(m)}_{4 r}(\tt x_0, x_0, t_0)\cap \Big\{\t\leq t_0 - \e\,r^2\Big\}$ for every $\big(\tt \x, \x,\t\big)\in K_{r,\e}^{(m)}(\tt x_0, x_0, t_0)$;\label{item3H}
		\item $\tt M_{5r}^{(m)} \big(\tt x_0, x_0,t_0,\tt x, x,t\big) \geq m^- r^m$ for every $\big(\tt x, x,t\big)\in \tt \O^{(m)}_{4r}(\tt x_0, x_0, t_0)\cap \Big\{t\leq t_0 - \e\,r^2\Big\}$.\label{item4H}
	\end{enumerate}
	Once these properties have been established, the proof of \eqref{parabolic-Harnack-lifted} is straightforward. Indeed, if $\div_{\GG} b-c=0$, for every $\big(\tt\x,\x,\t\big)\in\tt K_r^{(m)}(\tt x_0, x_0, t_0)$  we get
	\begin{equation}\label{simple-harnack}
		\begin{aligned}
			\tt u\big(\tt\x,\tt \x,\t\big)\overset{\eqref{tilde-meanvalue}}=&\frac{1}{\vartheta r } \int_{\tt\Omega_{\vartheta r}^{(m)}(\tt\x,\x,\t)} \tt M_{\vartheta r}^{(m)} \big(\tt\x,\x,\t,\tt x, x,t\big) \tt u(\tt x,x,t) \, d\tt x d x dt \\
			\overset{(\ref{item2H}.)}\leq\;& \frac{m^+  r^{\frac{m-2}{m+\tt Q}} }{\vartheta r} \int_{\tt\Omega_{\vartheta r}^{(m)}(\tt \x,\x,\t)} \tt u(\tt x, x,t) \, d\tt x d x dt\\
			\overset{(\ref{item3H}.)}\leq\;& \frac{m^+r^{\frac{m-2}{m+\tt Q}} }{\vartheta r} \int_{\tt \O^{(m)}_{4 r}(\tt x_0, x_0, t_0)\cap \left\{\t\leq t_0 - \e\,r^2\right\}} \tt u(\tt x, x,t) \, d\tt x dx dt\\
			\overset{(\ref{item4H}.)}\leq\;& \frac{5m^+ r^{\frac{m-2}{m+\tt Q}} }{\vartheta m^- r^m }\frac{1}{5 r } \int_{\tt \O^{(m)}_{5r }(\tt x_0, x_0, t_0)} \tt M_{5r}^{(m)}  \big(\tt x_0, x_0,t_0,\tt x, x,t\big) \tt u(\tt x, x,t) \, d\tt x d x dt\\\overset{\eqref{tilde-meanvalue}}\leq&\frac{5m^+  r_0^{\frac{m-2}{m+\tt Q}-m} }{\vartheta m^-} \tt u(\tt x_0, x_0,t_0),
		\end{aligned}
	\end{equation}
	and \eqref{parabolic-Harnack-lifted} follows with $C_P = \frac{5m^+  r_0^{\frac{m-2}{m+\tt Q}-m} }{\vartheta m^-}$.
	
	\medskip
	
	We now remove the assumption $\div_{\GG} b-c=0$. From \textbf{(H4)} we infer that there exists $k>0$ such that $|\div_{\GG} b-c|\leq k:=(m_1+1)M_1$. We introduce the auxiliary functions
	\begin{equation}
		\overline u(\tt x,x,t):=\mathrm{e}^{k(t-t_0)}\tt u(\tt x, x,t),\qquad \underline u(\tt x, x,t):=\mathrm{e}^{-k(t-t_0)}\tt u(\tt x,x,t).
	\end{equation}
	Since $\HG \tt u = 0$, we have that
	\begin{equation}
		\overline{\HG} \overline u: = \HG \overline u + k \overline u = 0,\qquad \underline{\HG} \underline u: = \HG \underline u - k \underline u = 0.
	\end{equation}
	We observe that $\overline{\HG}$ and $\underline{\HG}$ are operators of the form \eqref{HGlifted} with zero-order term $\overline{c}:=c+k$ and $\underline{c}:=c-k$, respectively. It is clear that \emph{\ref{item1H}.}-\emph{\ref{item4H}.} hold with the same constants $r_0,\e_0,\vartheta, m^-,m^+$. Moreover, by Gaussian estimates we infer that there exists $T,r^*>0$ such that $0<\t-t_0<T$ for every $\big(\x,\tt \x,\t\big)\in\tt\Omega_{5 r}^{(m)}(x_0,\tt x_0,t_0)$, provided $r<r^*$. Assuming without loss of generality that $r_0 \leq  r^*$, we infer that there exists $\overline{c}, \underline{c}>0$ such that 
	\begin{equation}\label{soprasottobarrato}
		\overline{c}	\tt u\big(\tt \x, \x,\t\big) \leq \overline{u}\big(\tt\x,\x,\t\big) \leq \,\tt u\big(\tt \x,\x,\t\big),\qquad \,\tt u\big(\tt \x, \x,\t\big) \leq \underline{u}\big(\tt \x, \x,\t\big) \leq \,\underline{c}\tt u\big(\tt \x, \x,\t\big),
	\end{equation} 
	for every $\big(\x,\tt \x, \t\big)\in \tt\Omega_{5 r}^{(m)}(x_0,\tt x_0,t_0)$. Let $\overline M_{r}^{(m)}$ and $\underline M_{r}^{(m)}$ denote the mean value formula kernels relevant to $\overline{\HG}$ and $\underline{\HG}$, respectively, and by $\underline\Omega_{r}^{(m)}$ and $\overline\Omega_{r}^{(m)}$ the corresponding super-level sets. Since $\div_{\GG} b - \overline{c}=\div_{\GG} b -c-k \leq 0$, the first two lines in \eqref{simple-harnack} become
	\begin{equation}
		\begin{aligned}
			\tt u\big(\tt \x, \x,\t\big) &\overset{\eqref{soprasottobarrato}}\leq\frac{1}{\overline{c}}\overline{u} \big(\tt \x,\x, \t\big)\\ 
			&\;\overset{\eqref{tilde-meanvalue}}\leq \frac{1}{\vartheta r \overline{c}} \int_{\overline{\Omega}_{\vartheta r}^{(m)}(\tt\x,\x,\t)} \overline{M}_{\vartheta r}^{(m)} \big(\tt\x,\x,\t,\tt x, x,t\big) \overline{u}(\tt x,x,t) \, d\tt x d x dt \\		
			&\;\;\overset{(\ref{item2H}.)}\leq\frac{m^+ r^{\frac{m-2}{m+\tt Q}} }{\vartheta r \overline{c}} \int_{\overline\Omega_{\vartheta r}^{(m)}(\tt\x,\x,\t)}  \overline u(\tt x, x,t) \, d\tt x  d x dt =: I.
		\end{aligned}
	\end{equation}
	Since \emph{\ref{item3H}.} holds also in the form $\overline\Omega_{\vartheta r}^{(m)}(\tt \x,\x, \t)\subset \underline \O^{(m)}_{4 r}(\tt x_0, x_0, t_0)\cap \big\{\t\leq t_0 \!-\!\e\, r^2\big\}$,  by using $\div_{\GG} b - \underline{c} =\div_{\GG} b-c+k\geq 0$ we conclude
	\begin{equation}
		I\leq \frac{5m^+r^{\frac{m-2}{m+\tt Q}}}{\vartheta m^-r^m }\,\frac{\underline{c}}{\overline{c}}\,\frac{1}{5 r } \int_{\underline \O^{(m)}_{5r }(\tt x_0, x_0, t_0)} \!\!\!\!\!\!\!\!\!\!\!\!\!\!\!\!\!\!\!\!\underline M_{5r}^{(m)}\big(\tt x_0, x_0,t_0,\tt x, x,t\big) \underline u(\tt x, x,t) \, d\tt x d x dt \leq \frac{5m^+}{\vartheta m^-} \,\frac{\underline{c}}{\overline{c}}\, r_0^{\frac{m-2}{m+\tt Q}-m} \,\tt u(\tt x_0, x_0,t_0),
	\end{equation}
	where the last inequality follows again from \eqref{tilde-meanvalue} and \eqref{soprasottobarrato}.  Hence, inequality \eqref{parabolic-Harnack-lifted} follows with $C_P = \frac{5m^+}{\vartheta m^-} \frac{\underline{c}}{\overline{c}}\, r_0^{\frac{m-2}{m+\tt Q}-m}$. 
	
	\medskip
	
	To conclude the proof of Proposition \ref{lifted-harnack}, we are now left with proving claims \emph{\ref{item1H}.}-\emph{\ref{item4H}.}

	\medskip
	\emph{\ref{item1H}.} We prove that the set 
	\begin{equation}
		\tt K_{r,\e}^{(m)}(\tt x_0, x_0, t_0)=\overline{\biggl\{\frac{\tt \Gamma(\tt x_0, x_0, t_0,\tt x, x,t)}{\big( 4\pi(t_0-t)\big)^{m/2}}  >\frac{1}{r} \biggr\}}\cap \Big\{t\leq t_0 - \e\, r^2\Big\},
	\end{equation}
 	is non-empty for every $r,\e\leq 1$. From the Gaussian lower bound \eqref{Gaussian-estimates-lower} we infer that there exists $c,c_l>0$ (depending on structural assumptions) such that $\tt K_{r,\e}^{(m)}(\tt x_0, x_0, t_0)\subset \mathcal K$, where
	\begin{equation}
		\mathcal K:=\overline{\Biggl\{\frac{c}{\big (t_0-t\big)^{\frac{\tt Q}{2}}} \;\exp{\left(-c_l\frac{d_{\tt\GG}(\tt x_0, x_0,\tt x, x)^2}{(t_0-t)}\right)}\frac{1}{\big( 4\pi(t_0-t)\big)^{m/2}}  >\frac{1}{r} \Biggr\}}\cap \Big\{t\leq t_0 - \e\,r^2\Big\}. 
	\end{equation}
	We now show that $\mathcal K$ is non-empty. Let $t:=t_0-\e\,r^2$, and notice that any $(\tt x, x)\in \rnm\ti\rn$ satisfying
	\begin{equation}
		d_{\tt\GG}(\tt x_0, x_0,\tt x, x)\leq \frac{1}{\e\,r^2c_\l}\ln\left(\frac{(4\pi)^\frac{m}{2}}{c}\big(\e\,^2r\big)^{m+\tt Q-1}\right),
	\end{equation}
	belongs to $\mathcal K$. Therefore $\mathcal K$ is non-empty, and so is $\tt K_{r,\e}^{(m)}(\tt x_0, x_0, t_0)$.

	\medskip
	\emph{\ref{item2H}.} We show that there exist $r_0,m^+>0$ such that, for every $r\leq r_0$, it holds 
	\begin{equation}
		\tt M_{ r}^{(m)}\big(\tt \x,\x,\t,\tt x,x,t\big)\leq m^+ r^{\frac{m-2}{m+\tt Q}},\qquad \text{for every }\, (\tt x, x,t)\in \tt \O^{(m)}_{r}\big(\tt \x, \x, \t\big).
	\end{equation}
	Recall that the explicit formula for the kernel $\tt M_{ r}^{(m)}$ is given by 
	\begin{equation}\label{kernel-formula}
		\begin{aligned}
			\tt M_{ r}^{(m)}\big(\tt \x,\x,\t,\tt x, x,t\big)&=\frac{2^{m} m\omega_m}{m+2}(\t-t)^\frac{m-2}{2}\left(\ln\left( r\, \frac{\tt \Gamma\big(\tt \x, \x, \t,\tt x, x,t\big)}{\big( 4\pi(\t-t)\big)^{m/2}} \right)\right)^\frac{m+2}{2}\\
			&\quad+2^m \omega_m \left((\t-t)\ln\left( r\, \frac{\tt \Gamma\big(\tt \x, \x, \t,\tt x, x,t\big)}{\big( 4\pi(\t-t)\big)^{m/2}} \right)\right)^\frac{m}{2}\cdot\\
			&\qquad\qquad \qquad \cdot\frac{\langle A(x,t)\nabla_{\tt \GG} \tt \Gamma\big(\tt \x, \x, \t,\tt x, x,t\big),\nabla_{\tt \GG}\tt \Gamma\big(\tt \x, \x, \t,\tt x, x,t\big) \rangle}{{\tt \Gamma\big(\tt \x, \x, \t,\tt x, x,t\big)}^2}\\
			&=:K_1+K_2.
		\end{aligned}
	\end{equation}
	We first focus on the term $K_2$. To this end, we show that there exist constants $r_0>0$ and $c>0$ such that, for every $r\leq r_0$, the following inequality holds 
	\begin{equation}\label{bound-kernel}
		\frac{\langle A(x,t)\nabla_{\tt \GG} \tt \Gamma\big(\tt \x, \x, \t,\tt x, x,t\big),\nabla_{\tt \GG}\tt \Gamma\big(\tt \x,\x, \t,\tt x, x,t\big) \rangle}{{\tt \Gamma\big(\tt \x, \x, \t,\tt x, x,t\big)}^2}\leq \frac{c}{(\t-t)},\qquad (\tt x, x,t)\in \tt \O^{(m)}_{r}\big(\tt \x, \x, \t\big).
	\end{equation}
	To prove \eqref{bound-kernel} we apply Proposition \ref{Bound-derivative}.  We claim that this proposition remains valid when replacing  $\big\{\tt\G\geq K\big\}$  with the level set $\tt\Omega_{1/K}^{(m)}$. Indeed, Proposition \ref{Bound-derivative} applied to  $\tt \G^{(m)}$ on the set $\Big\{\tt\G^{(m)}(0,\tt \x,\x,\t,\tt x,x,y,t\Big)\geq K\big\}\supseteq\tt\Omega_{1/K}^{(m)}\big(\tt \x,\x,\t\big)$ implies that
	\begin{equation}
		\begin{aligned}
			H^{(m)}(0,\t,0,t)\tt X_i\tt\G\big(\tt \x, \x, \t,\tt x, x,t\big)& = X_i\tt\G^{(m)}(0,\tt \x, \x, \t,0,\tt x, x,y,t)\\
			&\leq  c \left(\frac{1}{\sqrt{t-\t}}d_{\tt\GG}\big(\tt \x,  \x,\tt x,x \big) + 1 \right) \tt \G^{(m)}\big(0,\tt \x,\x ,\t,0,\tt x, x ,t\big)\\
			&= c \left(\frac{1}{\sqrt{t-\t}}d_{\tt\GG}\big(\tt \x,  \x,\tt x, x\big) + 1 \right) H^{(m)}(0,\t,0,t)\tt\G\big(\tt \x, \x, \t,\tt x, x,t\big),				
		\end{aligned}
	\end{equation}
	proving the claim. Let $K$ be the constant given by the aforementioned proposition, and set $r_0:=1/K$. By the ellipticity assumption given by \textbf{(H4)} and Proposition \ref{Bound-derivative}, we finally conclude the validity of estimate \eqref{bound-kernel}.

	We focus now on $K_1$. By Proposition \ref{Prop-closetoparametrix} and \eqref{time-estimate}, we deduce that, if we pick $r_0$ small enough, we can choose $T>0$ such that
	\begin{equation}
		\tt\Omega_{r}^{(m)}(\tt\x,\x,\t)\subseteq \Big\{(\tt x, x, t)\in \rnm\ti\rn\ti\R : 0\leq \t-t\leq T\Big\}, \qquad \text{for every }\,r\leq r_0.
	\end{equation}
	Then, by \eqref{Gaussian-estimates} we infer that there exists $c=c(r_0)>0$ such that
	\begin{equation}\label{Pozzallo}
		K_1\leq \frac{2^m m \omega_m}{m+2}(\t-t)^\frac{m}{2}\left(\ln\Bigg(\frac{c\,r}{(\t-t)^\frac{m+\tt Q}{2}}\Bigg) \right)^\frac{m+2}{2}.
	\end{equation}
	Combining \eqref{kernel-formula}, \eqref{bound-kernel}, and \eqref{Pozzallo}, the validity of \emph{\ref{item2H}.} follows.

	\medskip
	\emph{\ref{item3H}.} The key ingredient to prove this inclusion is the parabolic dilation relevant to $\tt\GG$. Indeed, by Proposition \ref{Prop-closetoparametrix} we infer that there exist $r_0,\e_0>0$ such that for every $r\leq r_0$ and $\e\leq \e_0$ it holds
	\begin{equation}\label{casinalbo}
		\tt K_{r,\e}^{(m)}(\tt x_0, x_0, t_0)\subset \overline {\tt \O^{(m)^*}_{3 r}(\tt x_0, x_0, t_0)}\cap \Big\{t\leq t_0 -\e\, r^2\Big\},
	\end{equation} 
	where
	\begin{equation}
		\tt \O^{(m)^*}_{r}(\tt x_0,x_0, t_0):=\Bigg\{(\tt x, x ,t) \in \rnm\ti\rn\ti \R :\frac{\tt Z(\tt x_0, x_0, t_0,\tt x, x ,t)}{\big( 4\pi(t_0-t)\big)^{m/2}}>\frac{2}{r} \Bigg\},
	\end{equation}
	with $\tt Z$ being the parametrix of $\tt \G$. We claim that there exists $\vartheta \in (0,1)$ such that 
	\begin{equation}\label{parabolic-inclusion}
		\tt \O^{(m)^*}_{3\vartheta r}(\tt x, x, t)\subset \tt \O^{(m)^*}_{4 r}(\tt x_0, x_0, t_0)\cap \Big\{\t\leq t_0 - \e\,r^2\Big\},
	\end{equation}
	for every $(\tt x,x, t)\in  \overline {\tt \O^{(m)^*}_{3 r}(\tt x_0, x_0, t_0)}\cap \left\{t\leq t_0 - \e\, r^2\right\}$. The validity of \emph{\ref{item3H}.} follows then by Proposition \ref{Prop-closetoparametrix}, that implies $\tt \O^{(m)^*}_{4r}(\tt x_0,x_0, t_0)\subset\tt \O^{(m)}_{4r}(\tt x_0, x_0, t_0)$. To prove claim \eqref{parabolic-inclusion}, we employ the parabolic dilation relevant to $\tt\GG=\Big(\rnm\times \rn,\tt \circ,\tt \d_r\Big)$: by the homogeneity of the parametrix we infer that for every positive $k$ it holds 
	\begin{equation}\label{scaling}
		\Big((\tt x_0,x_0)\;\tt\circ \;\tt{\d_{r}}(\tt x,x),t_0+r^2t\Big)\in \tt \O^{(m)^*}_{k r}(\tt x_0,x_0, t_0)\iff \Big((\tt x_0,x_0)\;\tt\circ \;(\tt x,\tt x),t_0+t\Big)\in \tt \O^{(m)^*}_{k}(\tt x_0, x_0, t_0).
	\end{equation}
	Hence, it is sufficient to show \eqref{parabolic-inclusion} whenever $r=1$. Let $d(\tt x_0, x_0,t_0)$ denote the distance between the compact set $\overline {\tt \O^{(m)^*}_{3}(\tt x_0, x_0, t_0)}\cap \left\{t\leq t_0 - \e\right\}$ and the boundary of $\tt \O^{(m)^*}_{4}(\tt x_0, x_0, t_0)$. Clearly, this defines a positive function continuously depending on $(\tt x_0, x_0,t_0)$. In particular, it depends on the  automorphism $T_{A(\tt x_0, x_0,t_0)}$, associated with the matrix $A$ evaluated at the point $(\tt x_0, x_0,t_0)$ (see \eqref{automor}). By \eqref{two-ineq}, we infer that there exists $d_0,\e_0>0$ (depending only on structural assumption) such that $d(\tt x_0, x_0,t_0)\geq d_0$ for every $(\tt x_0, x_0,t_0)\in \tt\O$ and $\e\leq\e_0$. Since the diameter of $\tt\O_1^{(m)^*}(\tt x, x,t)$ is uniformly bounded, by \eqref{scaling} we deduce that it is possible to find $\vartheta \in (0,1)$ such that the diameter of $\tt\O_\vartheta^{(m)^*}(\tt x,x,t)$ is not greater than $d_0$, and then we conclude the proof of the claim.

	\medskip
	\emph{\ref{item4H}.} We recall again that the explicit expression of the kernel is given by \eqref{kernel-formula}. By the ellipticity assumption \textbf{(H4)} we infer $K_2\geq0$. To prove the estimate is therefore sufficient to show that $K_1\geq m^-r^m$. This follows once we set  
	\begin{equation}
		m^-:=\frac{2^mm\omega_m}{4(m+2)}\left(\ln(5/4) \right)^\frac{m+2}{2}.
	\end{equation}
\end{proof}
Taking advantage of the previous proposition, we are eventually able to prove Proposition \ref{Parabolic-Harnack}.
\\\\\newenvironment{pf-parabolic-harnack}{\noindent {\sc Proof of Proposition \ref{Parabolic-Harnack}.}}{\hfill $\square$}
\begin{pf-parabolic-harnack}
	Let $u\geq 0$ be a classical solution to $\H u = 0$ in $\O\subset\rn\ti\R$, and for every $(x,t)\in \O$ and $\tt x \in\rnm$ set $\tt u(\tt x,x,t):= u(x,t)$.  We notice that, by \eqref{lifting}, $\tt u$ solves $\tt \H \tt u = 0$ in $\tt \O:=\rnm \ti \O $.

	Choose $m=4$, and let $r_0,\e_0$ and $C_P$ be the corresponding constant given by Proposition \ref{lifted-harnack}. We claim that there exists $0<r_1\leq r_0<1$, $0<\e_1\leq \e_0<1$, $0<\vartheta_1<1$, and $C>0$ such that
	\begin{equation}\label{first-inclusion}
		\{0\}\ti D_r(z_0)=\{0\}\ti B_{\vartheta_1r}(x_0)\times \Big\{t-t_0=-\e_1 r^2\Big\}\subseteq \tt K_{r,\vartheta_1\e}^{(4)}(0,x_0, t_0),
	\end{equation}
	\begin{equation}\label{second-inclusion}
		\tt K_{5r, \vartheta_1\e}^{(4)}(0,x_0, t_0)\subseteq \tt Q_{r}(0,x_0,t_0)=\tt B_{r}(0,x_0)\times \Big\{0\leq t_0-t\leq r^2\Big\},
	\end{equation}
	\begin{equation}\label{third-inclusion}\
		\tt Q_{r}(0,x_0,t_0)\subseteq  \rnm\ti Q_{r}(x_0,t_0) \subseteq \rnm\ti \O=\tt \O,
	\end{equation}
	for every $(x_0,t_0)\in \O$, $\e\leq \e_1$ and $r\leq r_1$ such that $Q_{r}(x_0,t_0) \subset  \O$. Once the claim is proved, we are in position to apply Proposition \ref{lifted-harnack} to the lifted function $\tt u$, obtaining
	\begin{equation}
		\sup_{D_r(x_0, t_0)} u = \sup_{\{0\}\ti D_r(x_0, t_0)} \tt u \,\leq \sup_{\tt K_{r,\vartheta_1\e}^{(4)}(0,x_0,t_0)} \tt u \leq C_P \, \tt u(0,x_0,t_0) = C_P\, u(x_0, t_0),
	\end{equation}
	for every $(x_0,t_0)\in \O$. This proves the desired parabolic Harnack inequality.

	We prove the inclusions in the claim by employing Gaussian estimates and the boundedness of the set $\tt \O^{(m)}$. By Theorem \ref{Th-Fundamental} we infer that there exists two positive constants $c_\l$ and $\tt c$ such that 
	\begin{equation}
		\tt \G(0,x_0,t_0,\tt x, x,t)\geq \frac{\tilde c}{\big (t_0-t\big)^{\frac{\tt Q}{2}}} \;\exp{\left(-c_l\frac{d_{\tt\GG}(0,x_0,\tt x, x)^2}{(t_0-t)}\right)},
	\end{equation} 
	for every $(0,x_0,t_0),(\tt x,x,t)\in \rnm\ti\rn\ti \R$ such that $0\leq t_0-t\leq 1$. By the definition \eqref{tilde-distance}  of the distance $d_{\tt \GG}$ we deduce 
	\begin{equation}
		\tt \G(0,x_0,t_0,\tt x,x,t)\geq \frac{\tilde c}{\big (t_0-t\big)^{\frac{\tt Q}{2}}} \;\exp{\left(-c_\l\frac{d_X(x_0,x)^2}{(t_0-t)}\right)}\exp{\left(-c_\l\frac{\sum_{i=1}^\kappa \sum_{j=1}^{\tt m_i} |\tt x_j^{(i)}|^{\frac{2}{i}}}{(t_0-t)}\right)}=:I.
	\end{equation}
	If $(\tt x, x, t)\in \{0\}\ti D_r(z_0)$, then $t_0-t=\e_1r^2\leq 1$, $d_{X}(x_0,x)^2\leq \vartheta_1 r$ and $\tt x=0$, this implying
	\begin{equation}
		I\geq \frac{\tt c}{\e_1^{\tt Q/2} r^{\tt Q}}\exp{\Big(-c_\l\vartheta_1^2/\e_1\Big)},
	\end{equation}
	concluding that for every fixed $\vartheta_1\in(0,1)$ \eqref{first-inclusion} holds for a suitable choice of $\e_1$ and for every $r\leq r_*$, for some $r_*\in(0,1)$.

	The proof of \eqref{second-inclusion} follows by a similar argument: combining the Gaussian upper bound \eqref{Gaussian-estimates} with \eqref{ktilde}, \eqref{casinalbo}, and the scaling property \eqref{scaling}
	, we infer that there exist $r^*,\vartheta_1\in(0,1)$ such that \eqref{second-inclusion} holds for every $r\leq r^*$. Inclusion  \eqref{third-inclusion} is a direct consequence of the definition of the lifted distance \eqref{tilde-distance}, which completes the proof of the claim by picking $r_1\leq \min\big(r_0,r^*,r_*\big)$.
\end{pf-parabolic-harnack}

\section{Proof of Theorem \ref{Th-Harnack}}\label{Harnack}
We can first observe that it is not restrictive to assume $z_0=0$ and $r=1$. We let $r_1$, $\vartheta_1 $, $\e_1$ and $C_P$ be the constants given by Proposition \ref{Parabolic-Harnack}, and we set
\begin{equation}\label{thetanot}
	\vartheta_0:=\min\Bigg\{\vartheta_1,\frac{1}{12 k^3_1}\Bigg\},
\end{equation}
where $k_1$ is the constant given by the pseudo-triangular inequality \eqref{k1}. Moreover, we choose four positive constants $\nu, \eta, \mu, \vartheta$ with $0 < \nu < \eta < \mu < 1$ and $0 < \vartheta \leq \vartheta_0$, and consider the cylinders $Q^+$ and $Q^-$ introduced in \eqref{unit-box+-}. Finally, we set
\begin{equation}\label{rnot}
	r_0:= \min \Bigg\{ r_1,\frac{1}{2k_1},\sqrt{1-\gamma}\Bigg\}.
\end{equation} 
We next consider $z^-:=\big(x^-,t^-\big) \in Q^-= B_{\vartheta}(0) \times (-\nu,0)$, and $z^+:=\big(x^+,t^+\big) \in Q^+=B_{\vartheta}(0) \times (-\mu,-\eta)$, and exploit Proposition \ref{Parabolic-Harnack} to construct a \textit{Harnack chain}, i.e a finite set of points $\lbrace z_0,z_1,\ldots,z_m\rbrace \in Q_1(0)$ such that
\begin{equation}\label{eq:def-chain}
	z_0=z^+, \qquad z_m=z^-, \qquad Q_r(z_j)\subset Q_1(0)\subset \O,\qquad z_j \in D_r(z_{j-1}),  
\end{equation}
for every $ j=1,\ldots,m$ and $r \leq r_0$. If \eqref{eq:def-chain} holds, by iterating Proposition \ref{Parabolic-Harnack} we conclude that
\begin{equation}
	u\big(z^-\big)\leq C_P^m u\big(z^+\big).
\end{equation}
Once we prove that $m$ is uniformly bounded w.r.t. $z^+ \in Q^+$ and $z^- \in Q^-$, we conclude the proof of the theorem.

In order to prove \eqref{eq:def-chain}, we need to make a suitable choice of $z_j$'s. To this end, we consider a $X$-trajectory $\underline{\gamma}:[0,1] \rightarrow \Omega$ in $\mathcal{C}$ steering $x^+$ to $x^-$, i.e. $\underline{\gamma}$ is a solution to the following system 
\begin{equation}
	\begin{cases}
		\displaystyle\dot{\underline{\gamma}}(\t)=\sum_{i=1}^{m_1}\underline\alpha_i(\t)X_i\Big(\underline{\gamma}(\t)\Big), \\
		\underline{\gamma}(0)=x^+, \, \underline{\gamma}(1)=x^-,
	\end{cases}
\end{equation}
where $\underline\alpha=(\underline{\a}_1,\ldotp,\underline{\a}_{m_1})$ is the control function satisfying condition \eqref{condition-alphaj}. We further require $\underline{\gamma}$ to be optimal in the $d^\infty_X$ metric, meaning that
\begin{equation}\label{gamma-opt}
	d^\infty_X\big(x^+,x^-\big) = \sup_{\t\in [0,1]} |\underline\a(\t) |.
\end{equation}
Since $\underline{\gamma}$ connects only the space variables $x^+$ and $x^-$, we need to suitably incorporate the time variables $t^+$ and $t^-$ into the construction. To this end, we reparametrize this curve by stretching it over the time interval $[0,t^+-t^-]$, \emph{i.e.} we define the X-curve $\tt\gamma : \big[0,t^+-t^-\big]\to \O$ as 
\begin{equation}
	\tt \gamma(\t) := \underline{\gamma}\Bigg(\frac{\t}{t^+-t-}\Bigg),\qquad \t\in \big[0,t^+-t^-\big],
\end{equation}
whose control function $\tt \a$ is given by 
\begin{equation}\label{riparametrix}
	\tt \a (\t)=\frac{1}{t^+-t^-}\,\underline{\a}\Bigg(\frac{\t}{t^+-t^-}\Bigg),\qquad \t\in [0,t^+-t^-]. 
\end{equation}
The following observation will be crucial in the sequel. For every $s \in \big[0,t^+-t^-\big]$, let $\widehat \gamma:[0,1]\to \O$ denote the X-trajectory $\widehat\gamma(\t)=\tt \gamma (\t s)$. We denote its control function as $\widehat \a (\t)=s\,\tt\a(\t s)$, and we notice that $\widehat \gamma (0)=x^+$ and $\widehat \gamma (1)=\tt\gamma(s)$. Then, the following inequality holds true
\begin{equation}\label{despar}
	d_X\big(\tt\gamma(s),x^+\big)\leq\int_0^1|\widehat \a(\t)|d\t =  \int_0^s|\tt\a(\t)|d\t.
\end{equation}
We employ $\tt \gamma$ to choose the $z_j$'s: for an arbitrary $m\in\N$, we choose a positive $r$ such that
\begin{equation}\label{eq:def-r}
	\varepsilon_1 r^2=\frac{t^+-t^-}{m},
\end{equation}
and for every $j=0,\ldots,m$ we define
\begin{equation*}
	z_j=\big(x_j,t_j\big):=\Big(\tt{\gamma}\big(j\e_1r^2\big),t^+-j\e_1r^2\Big),
\end{equation*}
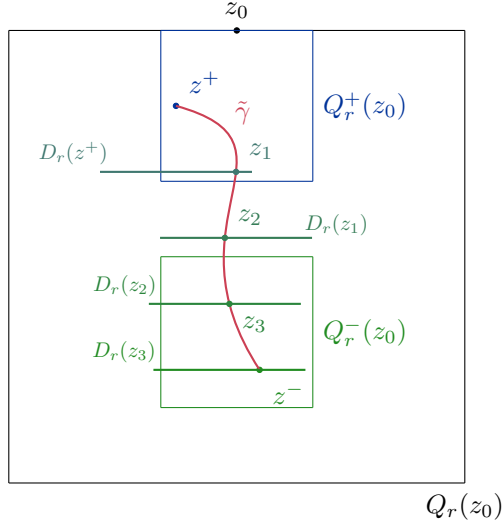
\begin{figure}\label{figure-harnack-chain}
	\centering
	\begin{tikzpicture}[scale=1, every node/.style={font=\small}]
		
		\draw [black] (-3,0) -- (3,0);
		\draw [black] (-3,-6) -- (3,-6);
		\draw [black] (-3,0) -- (-3,-6);
		\draw [black] (3,0) -- (3,-6) node[anchor=north, scale=1] {$Q_r(z_0)$};

		\filldraw (0,0) circle (1pt) node[above=1pt, scale=1] {$z_0$};
		
		\draw [darkpowderblue] (-1,0) -- (1, 0);
		\draw [darkpowderblue] (-1,-2) -- (1, -2);
		\draw [darkpowderblue] (-1,0) -- (-1, -2);
		\draw [darkpowderblue] (1,0) -- node[anchor=west, scale=1] {$Q^+_r(z_0)$} (1, -2);
		\filldraw [darkpowderblue] (-0.8,-1) circle (1pt) node[above right=1pt, scale=1] {$z^+$};
		
		\draw [forestgreen(web)] (-1,-3) -- (1, -3);
		\draw [forestgreen(web)] (-1,-5) -- (1, -5);
		\draw [forestgreen(web)] (-1,-3) -- (-1, -5);
		\draw [forestgreen(web)] (1,-3)  -- node[anchor=west, scale=1] {$Q^-_r(z_0)$} (1, -5);
		\filldraw [forestgreen(web)] (0.3,-4.5) circle (1pt) node[below right=1pt, scale=1] {$z^-$};
		
		\draw[brickred, thick] (-0.8,-1) .. controls (1,-1.5) and (-1,-2.5) .. (0.3,-4.5); 
		\node[brickred, anchor=west, xshift=-13pt, yshift=97pt] at (0.3,-4.5) {$\tt\gamma$};
		
		\filldraw [intermedio3] (-0.0099,-1.875) circle (1pt) node[above right=1pt] {$z_1$};
		\draw [intermedio3, thick] (-1.8,-1.875) -- (0.2,-1.875) 
		node [anchor=east, xshift=-50pt, yshift=7pt] {\scalebox{0.8} {$D_r(z^+)$}};
		\filldraw [intermedio2] (-0.1571,-2.75)  circle (1pt) node[above right=1pt] {$z_2$};
		\draw [intermedio2, thick] (-1.0099,-2.75) -- (0.9901,-2.75) 
		node [anchor=east, xshift=25pt, yshift=5pt] {\scalebox{0.8} {$D_r(z_1)$}};
		\filldraw [intermedio1] (-0.0970,-3.625) circle (1pt) node[below right=1pt] {$z_3$};
		\draw [intermedio1, thick] (-1.1571,-3.625) -- (0.8429,-3.625)
		node [anchor=east, xshift=-50pt, yshift=7pt] {\scalebox{0.8} {$D_r(z_2)$}};
		\draw [forestgreen(web), thick] (-1.0970,-4.5) -- (0.9030,-4.5)
		node [anchor=east, xshift=-52pt, yshift=7pt] {\scalebox{0.8} {$D_r(z_3)$}};
	\end{tikzpicture}
	\caption{Harnack chain (the time variable is represented vertically, and upwardly increasing).}
	\label{harnack-chain}
\end{figure}
see Figure \ref{harnack-chain}. We first observe that from the previous definition it follows
\begin{equation}\label{eq:harnack-points}
z_j=\Big(\tt{\gamma}_{x_{j-1}}\big(\e_1 r^2\big),t_{j-1}-\e_1 r^2\Big),
\end{equation}
where $\tt{\gamma}_{x_{j-1}}$ is the section of the curve $\tt{\gamma}$ steering $x_{j-1}$ to $x_j$, i.e. $\tt{\gamma}_{x_{j-1}}:\big[0,\e_1 r^2\big] \rightarrow \Omega$ with $\tt{\gamma}_{x_{j-1}}(0)=x_{j-1}$, $\tt{\gamma}_{x_{j-1}}\big(\e_1 r^2\big)=x_j$ for every $j=1,\ldots,m$.

It is clear that $z_0=z^+$, $z_m=z^-$. Hence, to prove \eqref{eq:def-chain} we begin by showing that for every $s\in \big[0,t^+-t^-\big]$, $\vartheta\leq \vartheta_0$ and $r\leq r_0$ it holds $B_r\big(\tt{\gamma}(s)\big)\subset B_1(0)$. This implies $Q_r(z_j) \subset Q_1(0)$ if $r\leq r_0$, in virtue of the choice of $\gamma$ and $t_j$'s. Let $y\in B_r\big(\tt{\gamma}(s)\big)$, then for every $r\leq r_0$ it holds
\begin{equation}
	\begin{aligned}
		d_X(y,0)\overset{\eqref{k1}}\leq& k_1 d_X\big(y,\tt{\gamma}(s)\big)+ k_1^2 d_X\big(x^+,0\big) + k_1^2d_X\big(\tt{\gamma}(s),x^+\big)\\
		\overset{\eqref{despar}}\leq & k_1 r+ k_1^2 \vartheta + k_1^2\int_{0}^{s}|\tt\a(\t)| d\t\\
		\leq\,\;&k_1 r+ k_1^2 \vartheta+ k^2_1\big(t^+-t^-\big) \sup_{\t\in[0,t^+-t^-]}|\tt\a(\t)|,
		\end{aligned}
\end{equation}
where in the last inequality we simply used the fact that $s\in \big[0,t^+-t^-\big]$. We then compute
\begin{equation}
	\begin{aligned}		
		d_X(y,0)\overset{\eqref{riparametrix}}\leq&k_1 r+ k_1^2 \vartheta+ k^2_1 \sup_{\t\in[0,1]}|\underline\a(\t)|
		\\
		\overset{\eqref{gamma-opt}}=&k_1 r+ k_1^2 \vartheta+ k^2_1 d_X^{(\infty)}(x^+,x^-)\\
		\overset{\eqref{eq:d1-dp}}=&k_1 r+ k_1^2 \vartheta+ k^2_1d_X(x^+,x^-)\\
		\overset{\eqref{rnot}}\leq &\frac{1}{2} +3k_1^3\vartheta\overset{\eqref{thetanot}}<1,
	\end{aligned}
\end{equation}
and therefore the trajectory $\tt \gamma$ we constructed does not exit $B_1(0)$. We conclude by showing the last inclusion in \eqref{eq:def-chain}: in virtue of \eqref{eq:def-D_r} and \eqref{eq:harnack-points}, $z_j \in D_r(z_{j-1})$ if $d_X(x_j,x_{j-1})\leq \vartheta_1 r$. Arguing as in \eqref{despar} we get
\begin{equation}
	\begin{aligned}
		d_X(x_j,x_{j-1})&\leq\bigintsss_{{(j-1)}\e_	1r^2}^{j\e_1r^2}|\tt \a(\t)| d\t\\
		&\!\!\!\!\;\overset{\eqref{eq:def-r}}\leq  \frac{t^+-t^-}{m} \sup_{\t\in[0,t^+-t^-]}|\tt \a(\t)| \\&\!\!\!\!\;\overset{\eqref{riparametrix}}=\frac{1}{m} \sup_{\t\in[0,1]}| \underline\a(\t) |		\\
		&\!\!\!\!\; \overset{\eqref{gamma-opt}}= \frac{1}{m} d^{(\infty)}_X(x^+,x^-)\\
		&\!\!\!\!\;\overset{\eqref{eq:d1-dp}}=\frac{1}{m}d_X(x^+,x^-).
	\end{aligned}
\end{equation}
Hence, $z_j \in D_r(z_{j-1})$ if we choose $m\in\N$ such that
\begin{equation}\label{primam}
	\frac{1}{m} d_X(x^+,x^-) \leq \vartheta_1 r\overset{\eqref{eq:def-r}} = \frac{\vartheta_1 \sqrt{t^+-t^-}}{\sqrt{m}\sqrt{\e_1}} \, \iff \, m \geq \frac{ d_X(x^+,x^-)^2\e_1}{\vartheta_1^2(t^+-t^-)}.
\end{equation}
The proof of the theorem follows once we obtain a bound for $m$ that is uniform w.r.t. $z^+ \in Q^+$ and $z^- \in Q^-$. In order for the previous construction to hold, we need to satisfy \eqref{primam} and $r\leq r_0$. Since $t^+-t^- \geq \eta-\nu$, we need to require $m \geq \frac{4k^2_1\e_1}{(\eta-\nu)}$ so that \eqref{primam} holds. Moreover, by \eqref{eq:def-r} condition $r \leq r_0$ holds if $m \geq \frac{t^+-t^-}{\e_1 r^2_0}$, and therefore it is sufficient to require that $m \geq \frac{\mu}{\e_1 r_0^2}$ since $t^+-t^- \leq \mu$. To summarize the above, we have proved that inequality \eqref{eq:thm-harnack} holds with $C_H=C_P^{\bar m}$, provided that we choose $\bar m \in\N$ such that $\bar m \geq \max \Big\lbrace \frac{4^2k_1\e_1}{(\eta-\nu)},\frac{\mu}{\e_1 r_0^2} \Big\rbrace$.

\medskip\medskip\medskip\medskip
{\bf Acknowledgments.} The authors would like to thank Prof. Sergio Polidoro for suggesting the problem and for many useful discussions, and Prof. Ermanno Lanconelli for pointing out the references \cite{lanconelli1,lanconelli2}.

Giulio Pecorella is member of the research group “Gruppo Nazionale per l’Analisi Matematica, la Probabilità e le loro Applicazioni” of the Italian “Istituto Nazionale di Alta Matematica”.

\appendix
\section{Comparison principle}\label{Comparison}
In this appendix we provide the proof of the following comparison principle.
\begin{proposition}\label{Prop-comparison}
	Let $\O_T=\O\times (T_1,T)$ be an open bounded subset of $\rnn$, and let $w\in C^2_{x,t}(\O_T)$ be a solution to
	\begin{equation}	
		\begin{cases}
			\H w\geq 0,\qquad \;\;&\mathrm{in}\;\O_T,\\
			\limsup w \leq 0, \qquad &\mathrm{in}\;\p\O\times(T_1,T)\cup \O \times\{T_1\},
		\end{cases}
	\end{equation}
	then $w\leq 0$ in $\O_T$. The same result holds in $\O=\rn$ if 	$\limsup w \leq 0$ in $\rn \times\{T_1\}$ and at infinity.
\end{proposition}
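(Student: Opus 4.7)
The plan is a classical parabolic maximum argument preceded by two preliminary reductions. First, since $c$ is bounded by assumption \textbf{(H4)}, setting $u := e^{-\l t} w$ for $\l$ sufficiently large (e.g.\ $\l = M_1$) yields an equivalent differential inequality for $u$ governed by an operator of the same form as $\H$ but with zero-order coefficient $c-\l \leq 0$. Since $e^{-\l t}>0$, all sign and $\limsup$ conditions transfer between $w$ and $u$, so we may assume $c\leq 0$ throughout. Second, for every $\e>0$ set $v_\e := w - \e(t - T_1)$; in view of $c\leq 0$ and $t\geq T_1$, a direct computation gives
\begin{equation*}
	\H v_\e \,=\, \H w \,-\, c\,\e\,(t-T_1) \,+\, \e \;\geq\; \e \;>\; 0 \qquad \text{in}\;\O_T,
\end{equation*}
while the parabolic-boundary condition $\limsup v_\e \leq 0$ is preserved.

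Fix $T'<T$. Combining the $\limsup$ hypothesis with the continuity of $w$ on $\O_T$ and the compactness of $\overline{\O}\times[T_1,T']$, a standard argument shows that $w$ is bounded above on $\O\times(T_1,T']$. Suppose, toward a contradiction, that $M := \sup_{\O\times (T_1,T']} v_\e > 0$. A maximizing sequence accumulates at some $(x_*,t_*)\in \overline{\O}\times [T_1,T']$; the parabolic-boundary $\limsup$ condition rules out the points on $\p\O\times[T_1,T']\cup \O\times\{T_1\}$, so $(x_*,t_*)\in \O\times (T_1,T']$ and $v_\e(x_*,t_*)=M$ by continuity of $v_\e$. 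At this maximum point, for every $\a=(\a_1,\ldots,\a_{m_1})\in \R^{m_1}$ the one-variable function $s\mapsto v_\e\bigl(\exp(s\sum_i \a_i X_i)(x_*),t_*\bigr)$ attains a local maximum at $s=0$; elementary calculus along the corresponding integral curve then gives $X_iv_\e(x_*,t_*)=0$ for every $i=1,\ldots,m_1$, and
\begin{equation*}
	\sum_{i,j=1}^{m_1} \a_i\a_j\,X_iX_jv_\e(x_*,t_*)\,\leq\, 0 \qquad\text{for every}\;\a\in\R^{m_1},
\end{equation*}
so that the symmetric part of the matrix $\bigl(X_iX_jv_\e(x_*,t_*)\bigr)$ is negative semidefinite. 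Moreover $\p_tv_\e(x_*,t_*)\geq 0$, with equality if $t_*<T'$. Expanding $\H$ in non-divergence form and using $X_iv_\e(x_*,t_*)=0$, $c\leq 0$, $v_\e(x_*,t_*)>0$, together with the elementary fact that $\mathrm{tr}(AB)\leq 0$ whenever $A$ is positive semidefinite and $B$ is negative semidefinite symmetric, we deduce $\H v_\e(x_*,t_*)\leq 0$, contradicting the strict inequality. Hence $v_\e\leq 0$ on $\O\times(T_1,T')$; letting $T'\nearrow T$ and then $\e\searrow 0$ yields $w\leq 0$ in $\O_T$.

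For the unbounded case $\O=\rn$, a cut-off argument reduces the problem to the bounded one. Given $\d>0$, the uniform condition $\limsup_{|x|\to\infty}w\leq 0$ provides $R_0>0$ such that $w\leq \d$ on $\{|x|\geq R_0\}\times (T_1,T)$. Since we may assume $c\leq 0$, the function $w-\d$ satisfies $\H(w-\d)=\H w - c\,\d\geq 0$ in $\rn\times(T_1,T)$ and $\limsup(w-\d)\leq 0$ on the parabolic boundary of each cylinder $B_R(0)\times (T_1,T)$ with $R\geq R_0$. Applying the bounded result on such cylinders yields $w\leq \d$ in $B_R(0)\times(T_1,T)$; sending $R\to\infty$ and $\d\searrow 0$ concludes. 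The main technical obstacle lies in the maximum-point calculation above: the regularity $w\in C^2_{x,t}$ only provides Lie derivatives of $w$, not its Euclidean partial derivatives, so the classical pointwise Hessian computation is not directly available. The work-around is to probe the maximum along the integral curves of arbitrary horizontal combinations $\sum_i \a_i X_i$, reducing the semidefiniteness of the vector-field Hessian to elementary one-dimensional calculus.
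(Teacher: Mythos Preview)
Your argument is correct and is essentially the paper's proof: an exponential weight to neutralize the sign of $c$ (the paper does this in a single step, obtaining the strict contradiction directly from $(M-c)v>0$ at a positive maximum; you split it into the reduction $c\le 0$ plus the $\e(t-T_1)$ perturbation), followed by the classical first/second-order conditions at an interior maximum, extracted by probing along integral curves of $Z=\sum_i\alpha_iX_i$ to bypass the absence of Euclidean derivatives. The one point you label ``elementary calculus'' --- that $s\mapsto v_\e\bigl(\exp(sZ)(x_*),t_*\bigr)$ is actually $C^2$ with $g'(0)=\sum_i\alpha_iX_iv_\e$ and $g''(0)=\sum_{i,j}\alpha_i\alpha_jX_iX_jv_\e$, given only continuity of the \emph{Lie} derivatives $X_iv_\e$ and $X_iX_jv_\e$ --- is not entirely trivial; the paper cites Lemma~4.2 of \cite{bonfiglioli2004fundamental} for exactly this step.
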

\begin{proof}
	The scheme of the proof is classical, once we follow the $C^2_{x,t}$ regularity of $w$. For every $k\in \N$ we set $\O_{T_k} := \O_T\cap\left\{t\leq \big(T-\frac{1}{k}\big)\right\}$, and we prove the result by showing that $w\leq 0$ in $\O_{T_k}$ for every $k \in \N$. The result on $\rn$ straightforwardly follows.

	Let us pick some $M>M_1$ (where $M_1$ is given by \textbf{(H4)}), and set $v(x,t):=w(x,t)\exp({-M t})$. By a direct computation we see that $v$ solves $	\H v - M v \geq 0$ in $\O_{T_k}$, and that $\limsup v \leq 0$ in $ \p\O\times(T_1,T)\cup \O \times\{T_1\}$. Since the function $v(\cdot,t)$ has continuous first and second order Lie derivatives w.r.t. the generating vector fields $X_i$, then it has continuous Lie derivative w.r.t. every $X\in \span\{X_i\}_{i=1}^{m_1}$(see Lemma 4.2 in \cite{bonfiglioli2004fundamental}). Therefore, if $v\in C^2_{x,t}(\O_T)$ has a positive maximum point in $(x_0,t_0)\in \O\times \left\{t = T_k\right\}$, we claim that
	\begin{equation}\label{maximum-point}
		X_iv(x_0,t_0) = 0,\qquad  \p_t 	v(x_0,t_0)\leq 0,\qquad \sum_{i,j=1}^{m_1}a_{ij}(x_0,t_0)X_iX_j v(x_0,t_0)\leq 0,
	\end{equation}
	whereas when $(x_0,t_0)\in \O_{T_k}$ the second inequality becomes an equality. From this claim it follows $\big(M-c(x_0,t_0)\big)v(x_0,t_0)\leq 0$, hence $v\leq 0$ in $\O_{T_k}$, whereby the thesis follows.

	We prove the claim by proving the last inequality in \eqref{maximum-point}, since the others are straightforward: by contradiction, assume there exists $d \in \R^{m_1}$ such that $\sum_{i,j=1}^{m_1}d_{i}d_jX_iX_j v(x_0,t_0)> 0$. Then, let $\exp(sZ)(0)$ be the integral curve  of $Z=\sum_{i=1}^{m_1}d_iX_i$ starting from the origin, and for every $h\in\R$ set $g(h):= v\big(x_0 \circ \exp(hZ)(0),t_0\big)$. Since $v(\cdot,t_0)$ has continuous second order Lie derivatives, it follows $g \in C^2(-\sigma,\sigma)$ for a sufficiently small $\sigma>0$. Therefore, recalling that $(x_0,t_0)$ is a maximum point, by a Taylor expansion we get
	\begin{equation}
		\begin{aligned}
			v(x_0,t_0)\geq g(h)=& g(0) + \frac{h^2}{2}g''(0) + o_{h\rightarrow 0}\big(h^2\big)\\
			=&\, v(x_0,t_0) + \frac{h^2}{2}\sum_{i,j=1}^{m_1}d_{i}d_jX_iX_j v(x_0,t_0)+o_{h\rightarrow 0}\big(h^2\big) >  v(x_0,t_0),
		\end{aligned}
	\end{equation}
	hence the contradiction.   
\end{proof}

\bibliographystyle{plain}
\bibliography{Bibtex}

\end{document}